\newtheoremstyle{exercise} %for books or class notes
  {3pt} %space above
  {3pt} %space below
  {\small\rmfamily} %body font
  {
} %indent amount(empty=no indent,\parindent=para indent)
  {\rmfamily\scshape} %thm head font
  {.} %punctuation after thm head
  {.5em} %space after thm head: " " = normal interword space;
\newtheoremstyle{newplain}
  {5pt}
  {5pt}
  {\itshape}
  {}
  {\rmfamily\scshape}
  {. ---}
  {.5em}
  {}
\newtheoremstyle{newremark}
  {5pt}
  {5pt}
  {\rmfamily}
  {}
  {\rmfamily\scshape}
  {. ---}
  {.5em}
  {}
\theoremstyle{newplain}
\newtheorem*{Theorem*}{Theorem} %no numbering for Theorem*
\newtheorem*{Claim*}{Claim}
\theoremstyle{newplain}
\newtheorem{Theorem}{Theorem}
\newtheorem{Lemma}[Theorem]{Lemma}
\newtheorem{Proposition}[Theorem]{Proposition}
\newtheorem{Conjecture}[Theorem]{Conjecture}
\newtheorem{Definition}[Theorem]{Definition}
\theoremstyle{newremark}
\newtheorem{Empty}[Theorem]{}
\newtheorem{Remark}[Theorem]{Remark}
\newtheorem{Example}[Theorem]{Example}
\newtheorem{Claim}[Theorem]{Claim}
\theoremstyle{exercise}
\numberwithin{Theorem}{section}
\numberwithin{Exercise}{section}
\theoremstyle{newplain}
\newtheorem*{Theoreme*}{Théorème} %no numbering for Theorem*
\theoremstyle{newplain}
\newtheorem{Theoreme}{Théorème}
\newtheorem{Proposition}[Theoreme]{Proposition}
\newtheorem{Definition}[Theoreme]{Définition}
\theoremstyle{newremark}
\theoremstyle{exercise}
\numberwithin{Theoreme}{section}
\numberwithin{Exercice}{section}
\newcommand{\R}{\mathbb{R}} %real numbers
\newcommand{\Rm}{\R^m}
\newcommand{\Rn}{\R^n}
\newcommand{\ind}{\mathbbm{1}} %indicatrix function
\newcommand{\R}{\mathbf{R}} %real numbers
\newcommand{\Rm}{\R^m}
\newcommand{\Rn}{\R^n}
\renewcommand{\setminus}{\thicksim} %set theoretic difference à la Federer
\newcommand{\bbN}{\mathbb{N}}
\newcommand{\calB}{\mathscr{B}}
\newcommand{\calE}{\mathscr{E}}
\newcommand{\calF}{\mathscr{F}}
\newcommand{\calG}{\mathscr{G}}
\newcommand{\calH}{\mathscr{H}}
\newcommand{\calI}{\mathscr{I}}
\newcommand{\calK}{\mathscr{K}}
\newcommand{\calL}{\mathscr{L}}
\newcommand{\calM}{\mathscr{M}}
\newcommand{\calN}{\mathscr{N}}
\newcommand{\calP}{\mathscr{P}}
\newcommand{\calR}{\mathscr{R}}
\newcommand{\caN}{\boldsymbol{\frak N}}
\renewcommand{\frak}{\mathcal}
\newcommand{\frA}{\frak A}
\newcommand{\frU}{\frak U}
\newcommand{\balpha}{\boldsymbol{\alpha}}
\newcommand{\bbeta}{\boldsymbol{\beta}}
\newcommand{\bchi}{\boldsymbol{\chi}}
\newcommand{\bdelta}{\boldsymbol{\delta}}
\newcommand{\bgamma}{\boldsymbol{\gamma}}
\newcommand{\bmu}{\boldsymbol{\mu}}
\newcommand{\btheta}{\boldsymbol{\theta}^*}
\newcommand{\bB}{\mathbf{B}}
\newcommand{\bC}{\mathbf{C}}
\newcommand{\bG}{\mathbf{G}}
\newcommand{\bN}{\mathbf{N}}
\newcommand{\bO}{\mathbf{O}}
\newcommand{\bU}{\mathbf{U}}
\newcommand{\bZ}{\mathbf{Z}}
\newcommand{\bc}{\pmb{c}}
\newcommand{\bg}{\pmb{g}}
\DeclareMathOperator{\rmap}{\mathrm{ap}} %approximate limits and derivatives
\DeclareMathOperator{\rmBdry}{\mathrm{Bdry}} %boundary
\DeclareMathOperator{\rmcard}{\mathrm{card}} %cardinal
\DeclareMathOperator{\rmconv}{\mathrm{conv}} %convex hull
\DeclareMathOperator{\rmdiam}{\mathrm{diam}} %diameter
\DeclareMathOperator{\rmdist}{\mathrm{dist}} %distance
\newcommand{\rmesssup}{\mathrm{ess\,sup}} %essential supremum
\DeclareMathOperator{\rmHom}{\mathrm{Hom}} %homomorphisms
\DeclareMathOperator{\rmid}{\mathrm{id}} %the identity map
\DeclareMathOperator{\rmim}{\mathrm{im}} %image
\DeclareMathOperator{\rmint}{\mathrm{int}} %intérieur
\DeclareMathOperator{\rmInt}{\mathrm{Int}} %interior
\DeclareMathOperator{\rminv}{\mathrm{inv}} %intérieur
\DeclareMathOperator{\rmLip}{\mathrm{Lip}} %Lipschitz constant
\DeclareMathOperator{\rmloc}{\mathrm{loc}}
\DeclareMathOperator{\rmrank}{\mathrm{rank}} %rank
\DeclareMathOperator{\rmset}{\mathrm{set}} %set
\DeclareMathOperator{\rmspan}{\mathrm{span}} %span
\DeclareMathOperator{\rmspt}{\mathrm{spt}} %support
\DeclareMathOperator{\rmsquare}{\mathrm{square}} %square
\DeclareMathOperator{\rmsupp}{\mathrm{supp}} %support, as well
\DeclareMathOperator{\rmTan}{\mathrm{Tan}} %tangent space or measure
\newcommand{\rmI}{\mathrm{I}}
\newcommand{\rmII}{\mathrm{II}}
\newcommand{\blseg}{\pmb{[}}
\newcommand{\brseg}{\pmb{]}}
\newcommand{\hel} {
\hskip2.5pt{\vrule height7pt width.5pt depth0pt}
\hskip-.2pt\vbox{\hrule height.5pt width7pt depth0pt}
\, }
\newcommand{\shel} {
\hskip2.5pt{\vrule height5pt width.4pt depth0pt}
\hskip-.2pt\vbox{\hrule height.4pt width5pt depth0pt}
\, }
\newcommand{\cone} {
\times \negthickspace \negthickspace \negthickspace \times \medspace }
\newcommand{\lseg}{\boldsymbol{[}\!\boldsymbol{[}}
\newcommand{\rseg}{\boldsymbol{]}\!\boldsymbol{]}}
\def\XXint#1#2#3{{%
\setbox0=\hbox{$#1{#2#3}{\int}$}
\vcenter{\hbox{$#2#3$}}\kern-.5\wd0}}
\newcommand{\lno}{\left\bracevert}
\newcommand{\rno}{\right\bracevert}
\renewcommand{\em}{\bf}
\newcommand{\veps}{\varepsilon}
\newcommand{\vphi}{\varphi}
\newcommand{\la}{\langle}
\newcommand{\ra}{\rangle}
\renewcommand{\leq}{\leqslant}
\renewcommand{\geq}{\geqslant}
\renewcommand{\subset}{\subseteq}
\renewcommand{\supset}{\supseteq}
\newlength{\drop}
\begin{document}

%=================
% TITLE AND AUTHOR
%=================

%\thispagestyle{empty}
%\titleAT
%\clearpage

\title[Mass minimizing $G$ chains]{On the existence of mass minimizing rectifiable\\ $G$ chains in finite dimensional normed spaces}

\def\curraddrname{{\itshape On leave of absence from:}}

\author[Th. De Pauw]{Thierry De Pauw}
\address{School of Mathematical Sciences\\
Shanghai Key Laboratory of PMMP\\ 
East China Normal University\\
500 Dongchuang Road\\
Shanghai 200062\\
P.R. of China}
\curraddr{Universit\'e Paris Diderot\\ 
Sorbonne Universit\'e\\
CNRS\\ 
Institut de Math\'ematiques de Jussieu -- Paris Rive Gauche, IMJ-PRG\\
F-75013, Paris\\
France}
\email{thdepauw@math.ecnu.edu.cn}

\author[I. Vasilyev]{Ioann Vasilyev}
\address{St. Petersburg Department of Steklov Mathematical Institute\\ Russian Academy
of Sciences (PDMI RAS)\\ Russia\\
{\itshape Current address:} Universit\'e Paris-Est Marne\\ LAMA (UMR 8050)\\ 5 Boulevard Descartes\\ 77454
Champs sur Marne\\ France}
\email{ioann.vasilyev@imj-prg.fr}

\keywords{Hausdorff measure, integral geometry, rectifiable chains, Plateau problem}

\subjclass[2010]{Primary 49Q15,49Q20,52A21,28A75,52A38; Secondary 52A40,49J45}

\thanks{The first author was partially supported by the Science and Technology Commission of Shanghai (No. 18dz2271000).}

\date{December 9th, 2018}

%=========
% ABSTRACT
%=========

\begin{abstract}
We introduce the notion of density contractor of dimension $m$ in a finite dimensional normed space $X$. If $m+1=\dim X$ this includes the area contracting projectors on hyperplanes whose existence was established by H. Busemann. If $m=2$, density contractors are an ersatz for such projectors and their existence, established here, follows from work by D. Burago and S. Ivanov. Once density contractors are available, the corresponding Plateau problem admits a solution among rectifiable $G$ chains, regardless of the group of coefficients $G$. This is obtained as a consequence of the lower semicontinuity of the $m$ dimensional Hausdorff mass, of which we offer two proofs. One of these is based on a new type of integral geometric measure. 
\end{abstract}

\maketitle

%===========
% DEDICATION
%===========

\begin{comment}
\begin{flushright}
\textsf{\textit{Dedicated to: Joseph Plateau, Arno, Hooverphonic, Hercule
Poirot,\\
Kim Clijsters, Pierre Rapsat, Stromae,\\ and non Jef t'es pas tout seul.}}
\end{flushright}
\end{comment}

%==================
% TABLE OF CONTENTS
%==================

\tableofcontents
%\newpage

%==============================
% THE MEAT --- OR SO ONE THINKS
%==============================

\section{Foreword}

The classical tools of Geometric Measure Theory, developed by {\sc H. Federer} and {\sc W.H. Fleming} offer the following setting for the Plateau problem. Here the ambient space $X = \ell_2^n$ is the $n$ dimensional Euclidean space and $m$ is an integer comprised between 1 and $n-1$. Given an $m-1$ dimensional rectifiable cycle with integer multiplicity and compact support $B \in \calR_{m-1}(X,\bZ)$, we consider the variational problem
\begin{equation*}
(\calP)\begin{cases}
\text{minimize } \calM(T) \\
\text{among } T \in \calR_m(X,\bZ) \text{ with } \partial T = B \,.
\end{cases}
\end{equation*} 
As $\partial B = 0$ there are competitors indeed, one being provided by the cone construction. The mass of a competitor $T$ is
\begin{equation}
\label{intro.1}
\calM(T) = \int_A | \vec{T} | d\calH^m
\end{equation}
where $A$ is the underlying countably $(\calH^m,m)$ rectifiable set on which $T$ concentrates, $|\vec{T}|$ is the norm of its algebraic multiplicity and $\calH^m$ is the Hausdorff measure associated with the ambient Euclidean structure.
\par 
Problem $(\calP)$ admits a minimizing sequence $T_1,T_2,\ldots$ with each $T_k$ supported in the convex hull of $\rmsupp B$. This is because we may replace, if needed $T_k$ by $\pi_\# T_k$ where $\pi : X \to X$ is the nearest point projection on the convex hull of $\rmsupp B$. Since $\rmLip \pi \leq 1$ it follows that $\calM(\pi_\# T_k) \leq \calM(T_k)$. Such sequence is relatively compact with respect to {\sc H. Whitney}'s flat norm -- a consequence of the deformation theorem and rectifiability theorem, \cite{FED.FLE.60}. Its accumulation points $T$ are minimizers of problem $(\calP)$ because the mass is lower semicontinuous with respect to convergence in the flat norm:
\begin{equation}
\label{intro.2}
\calM(T) \leq \liminf_k \calM(T_k) \,.
\end{equation}
The convergence in flat norm,
\begin{multline}
\label{intro.3}
\calF(T-T_k) = \inf \big\{ \calM(R) + \calM(S) : R \in \calR_m(X,\bZ), S \in \calR_{m+1}(X,\bZ) \\ \text{ and } T-T_k = T + \partial S \big \}
\end{multline}
is in this case equivalent to the weak* convergence as currents.
\par 
The same method applies to proving the existence of a mass minimizing chain when, in $(\calP)$ the group of coefficients $\bZ$ is replaced by a cyclic group $\bZ_q$, \cite[4.2.26]{GMT}, a finite group \cite{FLE.66}, or more generally a locally compact normed Abelian group $G$ that does not contain any nontrivial curve of finite length, according to the work of {\sc B. White} \cite{WHI.99.deformation,WHI.99.rectifiability}. For instance $G=\bZ_2$ allows for considering a nonorientable minimal surface bounded by the M\"obius strip, whereas $G=\bZ_3$ allows for considering a minimal surface bounded by a triple M\"obius trip, singular along a spine where infinitesimally three half planes meet at equal angles.
\par 
We now describe two ways of understanding why mass is lower semicontinuous with respect to convergence in the flat norm.
\begin{enumerate}
\item[(1)] {\it Orthogonal projectors are contractions,} in particular they reduce mass: If $W \subset X$ is an $m$ dimensional affine subspace, $\pi_W : X \to X$ is the orthogonal projector onto $W$ and $\sigma$ is an $m$ dimensional simplex in $X$ then $\calH^m(\pi_W(\sigma)) \leq \calH^m(\sigma)$. To see how this is related to the lower semicontinuity of mass assume (as we may, according to the strong approximation theorem \cite[4.2.22]{GMT}) that $T,T_1,T_2,\ldots$ are all polyhedral chains. Thus $T - (T_k + R_k) = \partial S_k$ with $\calM(R_k)$ small. If we are able to infer from this that $\calM(T) \leq \calM(T_k + R_k)$ then we will be done. Simplifying even further we assume that $T = g \lseg \sigma \rseg$ is associated with a single simplex contained in some $m$ dimensional affine subspace $W \subset X$, and we write $T_k + R_k = \sum_j g_j \lseg \sigma_j \rseg$  with the $\sigma_j$ nonoverlapping. Since $\pi_{W\,\#}(T - (T_k+R_k))$ is an $m$ dimensional cycle with compact support in $W$ the constancy theorem implies that $T = \pi_\# T = \pi_\#(T_k + R_k)$. Therefore
\begin{multline*}
\calM(T) = \calM \left( \sum_j \pi_{W\,\#} (g_j \lseg \sigma j \rseg) \right) \\\leq \sum_j |g_j| \calH^m \left( \pi_{W\,\#}(\sigma_j) \right) \leq \sum_j |g_j| \calH^m(\sigma_j) = \calM(T_k + R_k)
\end{multline*}
\item[(2)] {\it Hausdorff measure coincides with integral geometric measure.} In other words the mass of $T$ can be recovered from the mass of its 0 dimensional slices $\la T , \pi , y \ra$ corresponding to all orthogonal map $\pi \in \bO^*(n,m)$ from $\Rn$ to $\Rm$, and $y \in \Rm$. Specifically if $\btheta_{n,m}$ denotes an $\bO(n)$ invariant probability measure on $\bO^*(n,m)$ then
\begin{equation}
\label{intro.10}
\calM(T) = \bbeta_1(n,m)^{-1} \int_{\bO^*(n,m)} d\btheta_{n,m}(\pi) \int_{\Rm} \calM(\la T , \pi , y \ra) d\calL^m(y)
\end{equation}
for some suitable constant $\bbeta_1(n,m) > 0$, see \cite[2.10.15 and 3.2.26]{GMT}. The mass $\calM(Z)$ of a 0 dimensional chain $\calR_0(X,\bZ) \ni Z = \sum_j g_j \bdelta_{x_j}$ (where the $x_j$'s are distinct) is simply the finite sum $\calM(Z) = \sum_j |g_j|$. Now $\calM : \calR_0(X,G) \to \R$ is lower semicontinuous with respect to convergence in the flat norm, and if $\calF(T-T_k) \to 0$ rapidly then for every $\pi$ one infers that $\calF(\la T,\pi,y \ra - \la T_k,\pi,y\ra) \to 0$ for almost every $y$. It therefore ensues from Fatou's Lemma that
\begin{multline*}
\calM(T) \leq   \bbeta_1(n,m)^{-1} \int_{\bO^*(n,m)} d\btheta_{n,m}(\pi) \int_{\Rm} \liminf_k\calM(\la T_k , \pi , y \ra) d\calL^m(y)\\
\leq \liminf_k \bbeta_1(n,m)^{-1} \int_{\bO^*(n,m)} d\btheta_{n,m}(\pi) \int_{\Rm} \calM(\la T_k , \pi , y \ra) d\calL^m(y)\\  = \liminf_k \calM(T_k) \,.
\end{multline*}
\end{enumerate}
\par 
Of interest to us in this paper is the case when the Euclidean norm of the ambient space is replaced with another, arbitrary norm $\|\cdot\|$. Thus we consider $G$ chains in a finite dimensional normed space $X$. The notions of $m$ dimensional rectifiable $G$ chain and their convergence in flat norm are not affected, thus the compactness tool for applying the direct method of the calculus of variations is available without modification. Hausdorff measure  $\calH^m_{\|\cdot\|}$ however has changed in the process, to the extent that the corresponding new Hausdorff mass
\begin{equation*}
\calM_H(T) = \int_A | \vec{T} | d\calH_{\|\cdot\|}^m \,,
\end{equation*}
$T \in \calR_m(X,\bZ)$, is not known in general to be lower semicontinuous. In light of the two methods evoked above we note that:
\begin{enumerate}
\item[(1)] If $W \subset X$ is an $m$ dimensional subspace, there does not need to exist a projector $\pi : X \to W$ with $\rmLip \pi \leq 1$. Perhaps the simplest case is when $X = \ell^3_\infty$ and $W = X \cap \{ (x_1,x_2,x_3) : x_1 + x_2 + x_3 = 0 \}$.
\item[(2)] In a specific sense there is no integral geometric formula available for the Hausdorff measure in a normed space, according to {\sc R. Schneider} \cite{SCH.01}.
\end{enumerate}
\par 
Notwithstanding the first observation, {\sc H. Busemann} established the existence, in codimension 1 of projectors that reduce the area. More precisely if $W \subset X$ is an affine subspace of dimension $m= \dim X -1$ then there exists a projector $\pi : X \to X$ onto $W$ such that $\calH^m_{\|\cdot\|}(\pi(\sigma)) \leq \calH^m_{\|\cdot\|}(\sigma)$ whenever $\sigma$ is an $m$ dimensional simplex in $X$. This is enough for implementing successfully method (1) and proving the existence of an $\calM_H$ minimizing rectifiable $G$ chain when $m = \dim X -1$. In fact in \ref{existence.convex} below we show that there exists at least one minimizer supported in the convex hull of its boundary, even though not all minimizers have this extra property.
\par 
If $1 < m < \dim X - 1$ the existence of projectors (onto $m$ dimensional affine subspaces) that reduce area remains conjectural. {\sc D. Burago} and {\sc S. Ivanov} were able \cite{BUR.IVA.12} to design a calibration method in case $m=2$ proving that flat 2 dimensional disks minimize their Hausdorff measure $\calH^2_{\|\cdot\|}$ among Lipschitz competing surfaces with coefficients in $G=\R$ or $G=\bZ_2$. 
\par 
Even though in that case there may not exist area reducing projectors we show here how their method leads to the notion of density contractors, i.e.\ a probability measure on the space of linear mappings $\pi : X \to X$ with rank at most 2, playing the analogous role of a single projector as far as comparing areas is concerned. The fact that we need to consider arbitrary linear mappings $\pi$, possibly with large Lipschitz constant, is a source of some technical complications.
\par 
In fact we develop in the present paper an axiomatic theory of density contractors in arbitrary dimension $m$ and establish their existence when $m=2$ or $m=\dim X -1$, see \ref{existence.d.c}. We prove how density contractors can be used to compare the Hausdorff mass of chains \ref{411}, and the Hausdorff measure of sets \ref{413}. We show that once density contractors exist in dimension $m$, the Plateau problem admits $\calM_H$ minimizers with coefficients in arbitrary locally compact groups of coefficients that satisfy {B. White}'s condition stated above, see \ref{special.existence}. The minimizers obtained here, following \cite{AMB.SCH} see \ref{general.existence}, are merely locally rectifiable and we do not know in general whether they may be further required to be compactly supported, unless $m = \dim X - 1$. 
\par 
The main concern is indeed to show that $\calM_H : \calR_m(X,G) \to  \R$ is lower semicontinuous with respect to convergence in the flat norm. In the spirit of this introduction we obtain two proofs of this fact, granted the existence of density contractors.
\begin{enumerate}
\item[(1)] In Section 4 we establish a necessary and sufficient condition for the lower semicontinuity of $\calM_H$ in terms of a triangle inequality of polyhedral cycles, see \ref{theorem.1} and \ref{theorem.2}. In Section 5, where the notion of density contractor is introduced, we give a simple proof that their existence implies this triangle inequality for cycles, see \ref{tic.d.c}.
\item[(2)] In Section 6 we define a new type of integral geometric measure associated with density contractors, \ref{53}. Rather than integrating the mass of slices as in \eqref{intro.10}, which would correspond to a measure $\calI^m_1$ in the notation of \cite{GMT}, we consider a measure obtained from Caratheodory's method II with the local supremum of the averaged measure of projected sets constituting a fine cover (averaged using a density contractor), corresponding to $\calI^m_\infty$ in the notation of \cite{GMT}. Rather than essential suprema we consider locally actual suprema, thus more in the spirit of the Gross measure, \cite[2.10.4(1)]{GMT}. We show that the corresponding newly defined Gross measure coincides with the Hausdorff measure of rectifiable sets, see \ref{57}. Furthermore, in the spirit of \cite{BOU.DEP} we show that the corresponding Gross mass defined in \ref{61} is lower semicontinuous with respect to flat convergence, \ref{64}.
\end{enumerate}
\par 
It is our pleasure to record helpful discussions with Ph. Bouafia and G. Godefroy.

\section{Preliminaries}

\begin{Empty}[Hausdorff distance]
\label{h.d}
If $X$ is a metric space and $A,B \subset X$ are compact we define their {\em Hausdorff distance} as
\begin{equation*}
\rmdist_{\calH}(A,B) = \inf \big\{ \delta > 0 : A \subset \bB(B,\delta) \text{ and } B  \subset \bB(A,\delta) \big \}
\end{equation*}
where $\bB(A,\delta) = X \cap \{ x : \rmdist(x,A) \leq \delta \}$. The following are rather obvious.
\begin{enumerate}
\item[(1)] {\it Suppose $f,f_1,f_2,\ldots$ are continuous mappings from $X$ to $Y$ such that $f_k \to f$ locally uniformly as $k \to \infty$, and $A \subset X$ is compact. It follows that $\rmdist_{\calH}(f_k(A),f(A)) \to 0$. as $k \to \infty$.}
\item[(2)] {\it Suppose $A,A_1,A_2,\ldots$ are compact subsets of $X$ such that $\rmdist_{\calH}(A_k,A) \to 0$ as $k \to \infty$, and  $f : X \to Y$ is continuous. Then $\rmdist_{\calH}(f(A_k),f(A)) \to 0$ as $k \to \infty$.}
\end{enumerate}
\end{Empty}

\begin{Empty}
\label{b.m}
Let $X$ be a finite dimensional real linear space. Given a norm $\nu$ on $X$ we let $B_\nu = X \cap \{ x : \nu(x) \leq 1\}$ denote its unit (closed) ball. If $\nu_1$ and $\nu_2$ are two norms on $X$,  we define 
$
\delta(\nu_1,\nu_2) = \inf \left\{ \lambda > 0 : B_{\nu_1} \subset \lambda B_{\nu_2} \text{ and } B_{\nu_2} \subset \lambda B_{\nu_1} \right\}
$.
One readily checks that $\log \delta$ is well defined and a distance on the set of norms on $X$. When we will consider a convergent sequence of norms on $X$ it will be relative to this distance. 

\begin{enumerate}
\item[(1)] {\it Given $\nu,\nu_1,\nu_2,\ldots$ a sequence of norms on $X$, $\delta(\nu,\nu_j) \to 1$ as $j \to \infty$ if and only if $\nu_j(x) \to \nu(x)$ as $j \to \infty$ for every $x \in X$.}
\end{enumerate}

This follows from the relation $\nu(x) = \inf \{ t > 0 : x \in t.B_{\nu} \}$ and the fact that the pointwise convergence of the $\nu_j$, $j=1,2,\ldots$, to $\nu$ implies their uniform convergence on bounded subsets of $X$, due to their convexity.

\begin{enumerate}
\item[(2)] {\it Given two norms $\nu_1$ and $\nu_2$ on $X$ and $W \subset X$ a linear subspace, one has $\delta(\nu_1|_W,\nu_2|_W) \leq \delta(\nu_1,\nu_2)$.}
\end{enumerate}
\end{Empty}

\begin{Empty}
\label{haar}
Given a norm $\nu$ on $X$ we associate with it the Hausdorff outer measures $\calH^m_\nu$, $m \in \{1,\ldots,n\}$, see for instance \cite[2.10.2]{GMT}. It is the case that (the restriction to $\calB(X)$ of) $\calH^{\dim X}_\nu$ is a Haar measure on $X$. It therefore follows from the uniqueness of Haar measure and the Borel regularity of Hausdorff measures that if $\nu_1$ and $\nu_2$ are norms on $X$ then there exists $0 < \beta(\nu_1,\nu_2) < \infty$ such that $\calH^{\dim X}_{\nu_1} = \beta(\nu_1,\nu_2) \calH^{\dim X}_{\nu_2}$. In order to estimate $\beta(\nu_1,\nu_2)$ in terms of the distance between $\nu_1$ and $\nu_2$ we recall the following central result of H. Busemann \cite{BUS.47} (see also \cite[Lemma 6]{KIR.94} or \cite[7.3.6]{THOMPSON}):
\begin{equation}
\label{eq.busemann.1}
\calH^{\dim X}_\nu(B_\nu) = \balpha(\dim X)
\end{equation}
is independent of $\nu$. The identity $\balpha(m) = \calH^{\dim X}_{\nu_1}(B_{\nu_1}) = \beta(\nu_1,\nu_2) \calH^{\dim X}_{\nu_2}(B_{\nu_1})$, the definition of $\delta(\nu_1,\nu_2)$ and the homogeneity of Hausdorff measures imply that
\begin{equation}
\label{eq.fri.1}
\delta(\nu_1,\nu_2)^{- \dim X} \leq \beta(\nu_1,\nu_2) \leq \delta(\nu_1,\nu_2)^{\dim X} \,.
\end{equation}
We will also use the following observation $\calH^{\dim X}_{\nu,\delta} = \calH^{\dim X}_{\nu}$ for all $0 < \delta \leq \infty$, where the former are the $\delta$ size approximating outer measures.
\end{Empty}

\begin{Empty}[Grassmannian]
\label{grass}
Given $X$ and $m \in \{1,\ldots,\dim X - 1 \}$ we let $\bG_m(X)$ denote the set of $m$ dimensional linear subspaces of $X$. In order to give $\bG_m(X)$ a topology we equip first $X$ with an inner product $\la \cdot , \cdot \ra$ and corresponding norm $| \cdot |$. Given $W \in \bG_m(X)$ we let $\pi_W : X \to X$ denote the orthogonal projection onto $W$. We then define $d(W_1,W_2) = \vvvert\pi_{W_1}-\pi_{W_2}\vvvert$, $W_1,W_2 \in \bG_m(X)$, where $\vvvert \cdot \vvvert$ is the operator norm. In the remaining part of this paper, an inner product structure will always be fixed on $X$.
\par 
Let $W_1,W_2,\ldots$ be a sequence in $\bG_m(X)$. Choose $e_1^k,\ldots,e_n^k$  to be an orthonormal basis of $X$ such that $\rmspan\{e_1^k,\ldots,e^k_m\}=W_k$. Possibly passing to a subsequence we may assume $e_j^k \to e_j$ as $k \to \infty$, $j=1,\ldots,n$, and clearly $e_1,\ldots,e_n$ is an orthonormal basis of $X$. Define $W = \rmspan \{e_1,\ldots,e_m\} \in \bG_m(X)$. Given $x \in X$ a simple calculation shows that $\left| (\pi_W - \pi_{W_k})(x)\right| \leq |x| \sum_{j=1}^n \left| (\pi_W - \pi_{W_k})(e_j)\right| \leq 2 n |x| \sum_{j=1}^n\left| e^k_j-e_j\right|$. Thus $d(W,W_k) \to 0$.
\end{Empty}

\begin{Empty}[Busemann-Hausdorff density]
Now given two norms $\nu_1$ and $\nu_2$ on $X$ and $W \in \bG_m(X)$ we claim that there exists $0 < \beta(\nu_1,\nu_2,W) < \infty$ such that $\calH^m_{\nu_1} \hel W = \beta(\nu_1,\nu_2,W) \calH^m_{\nu_2} \hel W$. This is because (the restriction to $\calB(W)$ of) $\calH^m_{\nu_i} \hel W$, $i=1,2$, are both Haar measures on $W$. 

In the remaining part of this paper we will consider a given fixed norm $\|\cdot\|$ on $X$. Comparing it with the underlying reference inner product norm $|\cdot|$, we define the {\bf Busemann-Hausdorff density} function $\psi : \bG_m(X) \to \R$ as $\psi(W) = \beta(\|\cdot\|,|\cdot|,W)$, corresponding to each $m \in \{1,\ldots,\dim X -1\}$ (we omit both $\|\cdot\|$ and $m$ in the notation for $\psi$). It follows that $\psi(W)$ is characterized by the identity $\calH^m_{\|\cdot\|}(W \cap E) = \psi(W) \calH^m_{|\cdot|}(W \cap E)$ for any Borel $E \subset X$ such that one of the (and therefore both) both measures appearing there are nonzero and finite. Letting respectively $E = B_{|\cdot|}$ and $E = B_{\|\cdot\|}$ and referring to \eqref{eq.busemann.1} we find that
\begin{equation}
\psi(W) = \frac{\calH^m_{\|\cdot\|}\left(W \cap B_{|\cdot|}\right)}{\balpha(m)} = \frac{\balpha(m)}{\calH^m_{|\cdot|}\left(W \cap B_{\|\cdot\|}\right)} \,.
\end{equation}
\end{Empty}

\begin{Proposition}
\label{25}
The Busemann-Hausdorff density $\psi : \bG_m(X) \to \R$ is continuous.
\end{Proposition}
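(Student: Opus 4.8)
The plan is to use the formula $\psi(W)=\balpha(m)^{-1}\calH^m_{\|\cdot\|}(W\cap B_{|\cdot|})$ established just above and to show the numerator depends continuously on $W\in\bG_m(X)$. The idea is to transport $W_k$ back onto $W$ by an ambient Euclidean rotation, so that Hausdorff measure is always computed on the \emph{same} set $W\cap B_{|\cdot|}$ while only the norm is perturbed; the scaling bound \eqref{eq.fri.1} then finishes the argument.

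So let $W_1,W_2,\dots\in\bG_m(X)$ with $d(W_k,W)\to 0$. First I would construct elements $R_k$ of the orthogonal group $\bO(X)$ of the reference inner product with $R_k(W)=W_k$ and $R_k\to\rmid_X$. This is a perturbation argument in the spirit of \ref{grass}: fix orthonormal bases $u_1,\dots,u_m$ of $W$ and $u_{m+1},\dots,u_n$ of $W^\perp$; since $d(W_k,W)\to 0$ means $\pi_{W_k}\to\pi_W$ in operator norm, for large $k$ the tuples $\pi_{W_k}(u_1),\dots,\pi_{W_k}(u_m)$ and $\pi_{(W_k)^\perp}(u_{m+1}),\dots,\pi_{(W_k)^\perp}(u_n)$ are linearly independent and converge to the respective $u_j$; Gram--Schmidt orthonormalization, being continuous on linearly independent tuples and fixing already orthonormal ones, turns them into orthonormal bases $e_1^k,\dots,e_m^k$ of $W_k$ and $e_{m+1}^k,\dots,e_n^k$ of $(W_k)^\perp$ with $e_j^k\to u_j$ for every $j$; then $R_k$ defined by $R_ku_j=e_j^k$ is orthogonal, carries $W$ onto $W_k$, and tends to $\rmid_X$ since it converges on a basis.

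Because $R_k$ is a Euclidean isometry it fixes $B_{|\cdot|}$, so $W_k\cap B_{|\cdot|}=R_k(W)\cap R_k(B_{|\cdot|})=R_k(W\cap B_{|\cdot|})$. Writing $A:=W\cap B_{|\cdot|}$ and letting $\nu_k$ be the norm $x\mapsto\|R_kx\|$ on $X$, the map $R_k\colon(X,\nu_k)\to(X,\|\cdot\|)$ is a surjective isometry, so by metric invariance of Hausdorff measure $\calH^m_{\|\cdot\|}(W_k\cap B_{|\cdot|})=\calH^m_{\|\cdot\|}(R_kA)=\calH^m_{\nu_k}(A)$; and since $A\subset W$, the same invariance shows these values are unchanged if one restricts the norms to the $m$-dimensional space $W$. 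Now $R_kx\to x$ and continuity of $\|\cdot\|$ give $\nu_k\to\|\cdot\|$ pointwise on $X$, hence $\delta(\nu_k|_W,\|\cdot\||_W)\to 1$ by \ref{b.m}(2) and \ref{b.m}(1); applying the scaling relation \eqref{eq.fri.1} inside $W$ (which has dimension $m$) gives $\calH^m_{\nu_k|_W}=\beta_k\calH^m_{\|\cdot\||_W}$ with $\delta(\nu_k|_W,\|\cdot\||_W)^{-m}\le\beta_k\le\delta(\nu_k|_W,\|\cdot\||_W)^{m}$, whence $\beta_k\to 1$. Therefore $\calH^m_{\|\cdot\|}(W_k\cap B_{|\cdot|})=\beta_k\,\calH^m_{\|\cdot\|}(W\cap B_{|\cdot|})\to\calH^m_{\|\cdot\|}(W\cap B_{|\cdot|})$, i.e.\ $\psi(W_k)\to\psi(W)$, which is the asserted continuity.

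The only genuinely technical point is the construction of the rotations $R_k$ in Step~1; everything else is routine manipulation of \eqref{eq.fri.1}. One could instead use the non-isometric isomorphisms $\pi_{W_k}|_W\colon W\to W_k$, but then the set $(\pi_{W_k}|_W)^{-1}(W_k\cap B_{|\cdot|})$ moves with $k$ and one must additionally invoke the continuity of Lebesgue measure under Hausdorff convergence of $m$-dimensional convex bodies with nonempty interior (a standard fact, using \ref{h.d}); the orthogonal maps avoid this detour.
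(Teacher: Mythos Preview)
Your proof is correct and follows essentially the same route as the paper's: transport the moving subspace $W_k$ back to a fixed $m$-dimensional reference space so that only the norm varies, then invoke \ref{b.m}(1) and \eqref{eq.fri.1}. The paper pulls back along Euclidean isometries $f_j:\ell_2^m\to W_j$ and uses a compactness/subsequence argument to obtain the limiting $f:\ell_2^m\to W$, whereas you build ambient orthogonal maps $R_k\to\rmid_X$ explicitly via Gram--Schmidt; this is a cosmetic difference (your $R_k|_W$ plays the role of $f_j\circ f^{-1}$), and your construction trades the subsequence trick for the small extra work of producing the $R_k$ directly.
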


\begin{proof}
Let $W,W_1,W_2,\ldots$ be members of $\bG_m(X)$ such that $W_j \to W$ as $j \to \infty$. We consider linear isometries $f_j : \ell_2^m \to (X,|\cdot|)$, $j=1,2,\ldots$, whose range is $W_j$. By compactness a subsequence of $f_1,f_2,\ldots$, still denoted $f_1,f_2,\ldots$, converges pointwise to some linear isometry $f : \ell_2^m \to (X,|\cdot|)$. Since $W_j \to W$ as $j \to \infty$ we infer that the range of $f$ is $W$. Now we define norms $\nu,\nu_1,\nu_2,\ldots$ on $\Rm$ by $\nu(\xi) = \|f(\xi)\|$ and $\nu_j(\xi) = \|f_j(\xi)\|$, $\xi \in \Rm$, so that $f : (\Rm,\nu) \to (W,\|\cdot\|)$ and $f_j : (\Rm,\nu_j) \to (W_j,\|\cdot\|)$ are also isometries. Therefore $f_* \calH^m_{|\cdot|} = \calH^m_{|\cdot|} \hel W$, $f_{j\,*} \calH^m_{|\cdot|} = \calH^m_{|\cdot|} \hel W_j$, $f_* \calH^m_{\nu} = \calH^m_{\|\cdot\|} \hel W$ and $f_{j\,*} \calH^m_{\nu_j} = \calH^m_{\|\cdot\|} \hel W_j$, $j=1,2,\ldots$. Since $\nu_j \to \nu$ pointwise it follows from \ref{b.m}(1) and \eqref{eq.fri.1} that $\beta_{\ell_2^m}(\nu_j,|\cdot|) \to \beta_{\ell_2^m}(\nu,|\cdot|)$ and hence also $\beta_X(\|\cdot\|,|\cdot|,W_j) \to \beta_X(\|\cdot\|,|\cdot|,W)$ as $j \to \infty$. As the argument can be repeated for any subsequence of the original sequence $W_1,W_2,\ldots$ the proof is complete.
\end{proof}

\section{Plateau problem}

In this section we describe the setting in which we will state the Plateau problem. Groups of polyhedral, rectifiable and flat chains have been studied in \cite{GMT,FLE.66,WHI.99.rectifiability,WHI.99.deformation,DEP.HAR.07}. We now provide a very quick overview.

\begin{Empty}[Polyhedral, rectifiable, and flat $G$ chains]
We let $(G,\lno\cdot \rno)$ be an Abelian group equipped with a norm $\lno \cdot \rno$ which turns it into a complete metric space. As before $(X,\|\cdot\|)$ is a finite dimensional normed linear space and $m \in \{1,\ldots,\dim X - 1\}$. With an $m$ dimensional oriented simplex $\sigma$ in $X$ and a group element $g \in G$ we associate an object $g \lseg \sigma \rseg$. We consider equivalence classes of formal sums of these $\sum_{k=1}^\kappa g_k \lseg \sigma_k \rseg$. The equivalence identifies $(-g) \lseg - \sigma \rseg = g \lseg \sigma \rseg$ (where $- \sigma$ has orientation opposite to that of $\sigma$) and $g\lseg \sigma \rseg =  \sum_{k=1}^\kappa g \lseg \sigma_k \rseg$ if $\sigma_1,\ldots,\sigma_k$ is a simplicial partition of $\sigma$ with the same orientation. We let $\calP_m(X,G)$ denote the group whose elements are these {\em polyhedral $G$ chains of dimension $m$}. A boundary operator $\partial : \calP_m(X,G) \to \calP_{m-1}(X,G)$ is defined as usual. Given $P \in \calP_m(X,G)$ we define its {\em Hausdorff mass} by the formula
\begin{equation*}
\calM_H(P) = \sum_{k=1}^\kappa \lno g_k \rno \calH^m_{\|\cdot\|}(\sigma_k) \,,
\end{equation*}
where $P = \sum_{k=1}^\kappa g_k \lseg \sigma_k \rseg$ and the $\sigma_1,\ldots,\sigma_k$ are chosen to be nonoverlapping.
This definition does not depend upon the choice of such a decomposition of $P$. If we want to insist that the mass is defined with respect to the norm $\|\cdot\|$ we will write $\calM_{H,\|\cdot\|}(P)$ instead of $\calM_H(P)$ to avoid confusion, as another mass $\calM_{H,|\cdot|}(P)$ is readily available as well. Both are equivalent. The {\em flat norm} of $P \in \calP_m(X,G)$ is the defined as
\begin{multline*}
\calF(P) = \inf \big\{ \calM_{H,|\cdot|}(Q) +  \calM_{H,|\cdot|}(Q) : Q \in \calP_m(X,G), R \in \calP_{m+1}(X,G)\\ \text{ and } P = Q + \partial R \big\} \,.
\end{multline*}
The completion of $\calP_m(X,G)$ with respect to $\calF$ is the group $\calF_m(X,G)$ whose members are called {\em flat $G$ chains of dimension $m$}. One important feature of this group is that with a Lipschitz map $f : X \to Y$ one can associate a push-forward morphism $f_\# : \calF_m(X,G) \to \calF_m(Y,G)$ that commutes with $\partial$. The $m$ dimensional Lipschitz $G$ chains of $X$ are then defined to be the members of $\calF_m(X,G)$ of the form $\sum_{k=1}^\kappa f_{k \, \#} P_k$ where $P_k \in \calP_m(\ell^m_\infty,G)$ and $f_k : \ell_\infty^m \to X$ is Lipschitz, $k=1,\ldots,\kappa$. One further defines the subgroup $\calR_m(X,G) \subset \calF_m(X,G)$ whose members, called {\em rectifiable $G$ chains of dimension $m$}, have compact support and are limits in the $\calM_H$ norm of sequences of Lipschitz $G$ chains of dimension $m$. With such $T \in \calR_m(X,G)$ is associated a countably $(\calH^m_{\|\cdot\|},m)$ rectifiable Borel subset $\rmset_m \|T\| \subset X$ and for almost every $x \in  \rmset_m \|T\|$ a so-called $G$ orientation $\bg(x)$ which consists of a nonzero group element and an orientation of the approximate tangent space of $ \rmset_m \|T\|$ at $x$. The {\em Hausdorff mass} of $T$ is defined as
\begin{equation*}
\calM_H(T) = \int_{\rmset_m \|T\|} \lno \bg(x) \rno d\calH^m_{\|\cdot\|}(x) \,.
\end{equation*}
The definition is consistent with the previous one in case $T$ is polyhedral. 
\end{Empty}

\begin{Empty}[Hausdorff mass and lower semicontinuity]
The Hausdorff Euclidean mass $\calM_{H,|\cdot|} : \calR_m(X,G) \to \R$ is lower semicontinuous with respect to $\calF$ convergence whereas the lower semicontinuity of  $\calM_{H,\|\cdot\|} : \calR_m(X,G) \to \R$ is unknown in general. This is the main topic of the present paper. One can of course consider the $\calF$ lower semicontinuity extension
\begin{equation*}
\calM_{H,|\cdot|} : \calF_m(X,G) \to [0,\infty] \,.
\end{equation*}
Here $\calM_{H,|\cdot|}(T)$ is the infimum of $\liminf_k \calM_{H,|\cdot|}(T_k)$ corresponding to all sequences $(T_k)_k$ in $\calR_m(X,G)$ that $\calF$ converge to $T$. Of course it may occur that $\calM_{H,|\cdot|}(T) = \infty$.
\end{Empty}

\begin{Empty}[Locally rectifiable $G$ chains]
If $T \in \calF_m(X,G)$ and $u : X \to \R$ is Lipschitz then the restriction $T \hel \{ u < r \}$ is defined for $\calL^1$ almost every $r \in \R$. We here define $\calR^{\rmloc}_m(X,G)$ to be the subgroup of $\calF_m(X,G)$ of those $T$ such that for every bounded open set $U \subset X$, letting $(x) = \rmdist(x,U)$, there exists $R \in \calR_m(X,G)$ such that $T \hel \{ u <r \} = R \hel \{ u < r \}$ for $\calL^1$ almost every $0 < r \leq 1$. These do not necessarily have compact support. We call these {\em locally rectifiable $G$ chains of dimension $m$} and we define
\begin{equation*}
\calM_H(T) = \sup \big\{ \calM_H(R \hel U) : \text{ $U$ and $R$ are as above} \big\} \,.
\end{equation*}
One checks this is consistent with the preceding number in case $\|\cdot\| = |\cdot|$.
\end{Empty}

\begin{Empty}[Compactness]
The following is a consequence of the deformation theorem proved in this context by B. White \cite{WHI.99.deformation}. If $K\subset X$ is compact, $\lambda > 0$ and $(G,\lno \cdot \rno)$ is locally compact then 
\begin{equation*}
\calF_m(X,G) \cap \big \{ T : \rmsupp T \subset K \text{ and } \calM_{H,|\cdot|}(T) + \calM_{|\cdot|}(\partial T) \leq \lambda \big\}
\end{equation*}
is $\calF$ compact.
\end{Empty}

\begin{Empty}[White groups]
\label{compactness}
We say that $(G,\lno \cdot \rno)$ is a {\em White group} if $G$ does not contain any nontrivial curve of finite length. Of course if $(G,\lno \cdot \rno)$ is totally disconnected then it is White, for instance $G=\bZ$, $G=\bZ_q$ for any $q=2,3,\ldots$, or $G=\bZ_2^{\bN}$ the Cantor group. The group $(\R,|\cdot|)$ is not White, but $(\R,|\cdot|^p)$ is whenever $0 < p < 1$. The reason for considering those groups is the following result, see \cite{WHI.99.rectifiability} : If $(G,\lno \cdot \rno)$ is a White group, $T \in \calF_m(X,G)$ and $\calM_{H,|\cdot|}(T) < \infty$ then $T \in \calR_m(X,G)$. The cases $G=\bZ$ and $G=\bZ_q$ go back to \cite[4.2.16(3) and 4.2.26]{GMT}. Together with the preceding number we obtain the following compactness result: If $K\subset X$ is compact, $\lambda > 0$ and $(G,\lno \cdot \rno)$ is locally compact and White then 
\begin{equation*}
\calR_m(X,G) \cap \big \{ T : \rmsupp T \subset K \text{ and } \calM_{H,|\cdot|}(T) + \calM_{|\cdot|}(\partial T) \leq \lambda \big\}
\end{equation*}
is $\calF$ compact.
\end{Empty}

We now state a particular Plateau problem: that of minimizing the {\em Hausdorff mass} in the context of rectifiable $G$ chains in a finite dimensional normed space.

\begin{Theorem}
\label{general.existence}
Assume that 
\begin{enumerate}
\item[(A)] $(X,\|\cdot\|)$ is a finite dimensional normed space and $(G,\lno \cdot \rno)$ is an Abelian normed locally compact White group;
\item[(B)] $1 \leq m \leq \dim X - 1$;
\item[(C)] $\calM_H : \calR_m(X,G) \to \R$ is lower semicontinuous with respect to $\calF$ convergence;
\item[(D)] $B \in \calR_{m-1}(X,G)$ and $\partial B = 0$.
\end{enumerate}
It follows that the Plateau problem
\begin{equation*}
(\calP) \begin{cases}
\text{minimize } \calM_H(T) \\
\text{among } T \in \calR_{m}^{\rmloc}(X,G) \text{ such that } \partial T = B
\end{cases}
\end{equation*}
admits a solution.
\end{Theorem}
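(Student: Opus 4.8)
The plan is the direct method of the calculus of variations, adapted to the fact that in $(X,\|\cdot\|)$ there need be no mass nonincreasing retraction onto a compact set, so the minimizer will be produced only as a locally rectifiable chain. Fix the reference inner product norm $|\cdot|$, write $u(x)=|x|$, and choose $R_0$ with $\rmsupp B\subset\bB(0,R_0)$. I shall use the following standard facts: $\calM_{H,\|\cdot\|}$ and $\calM_{H,|\cdot|}$ are equivalent on $\calR_m(X,G)$; $\calM_{H,|\cdot|}$ is $\calF$ lower semicontinuous on $\calR_m(X,G)$; for $T\in\calF_m(X,G)$ and a.e.\ $r>0$ there is a (local) slice $\langle T\rangle_r$ with $(\partial T)\hel\bB(0,r)=\partial(T\hel\bB(0,r))+\langle T\rangle_r$ and $\int_0^\infty\calM_{H,|\cdot|}(\langle T\rangle_r)\,dr\leq\calM_{H,|\cdot|}(T)$; the cone with vertex $a$ over a rectifiable cycle $Z$ is a rectifiable chain with boundary $Z$ and Euclidean mass at most $\rmdiam(\{a\}\cup\rmsupp Z)\cdot\calM_{H,|\cdot|}(Z)$; and a flat limit of chains supported in a fixed closed set is supported in that set. \emph{Step 1: a minimizing sequence inside $\calR_m(X,G)$.} The cone over $B$ with vertex $0$ is an admissible competitor, so $\inf(\calP)<\infty$; pick $T_k\in\calR^{\rmloc}_m(X,G)$ with $\partial T_k=B$ and $\calM_H(T_k)\to\inf(\calP)$, hence $\sup_k\calM_{H,|\cdot|}(T_k)<\infty$. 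For each $k$, integrability of $r\mapsto\calM_{H,|\cdot|}(\langle T_k\rangle_r)$ forces $\liminf_{r\to\infty}r\,\calM_{H,|\cdot|}(\langle T_k\rangle_r)=0$, so one may choose $R_k\to\infty$ with $R_k>R_0$, $T_k\hel\bB(0,R_k)\in\calR_m(X,G)$, and $R_k\,\calM_{H,|\cdot|}(\langle T_k\rangle_{R_k})\to0$. As $\rmsupp B\subset\bB(0,R_0)$, the slice $\langle T_k\rangle_{R_k}$ is a cycle, so $\widehat T_k:=T_k\hel\bB(0,R_k)+\kappa_k$, with $\kappa_k$ the cone over $\langle T_k\rangle_{R_k}$ with vertex $0$, lies in $\calR_m(X,G)$, has $\partial\widehat T_k=B$ and $\rmsupp\widehat T_k\subset\bB(0,R_k)$, satisfies $\calM_H(\widehat T_k)\leq\calM_H(T_k)+C\,R_k\,\calM_{H,|\cdot|}(\langle T_k\rangle_{R_k})\to\inf(\calP)$, and has $\sup_k\calM_{H,|\cdot|}(\widehat T_k)<\infty$. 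Thus $(\widehat T_k)$ is a minimizing sequence for $(\calP)$ lying in $\calR_m(X,G)$.

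\emph{Step 2: diagonal extraction of the pieces.} For each integer $j\geq1$ and each $k$, averaging $r\mapsto\calM_{H,|\cdot|}(\langle\widehat T_k\rangle_r)$ over $(R_0+j,R_0+j+1)$ gives a radius $\rho_{j,k}$ in that interval with $\widehat T_k\hel\bB(0,\rho_{j,k})\in\calR_m(X,G)$ and $\calM_{H,|\cdot|}(\langle\widehat T_k\rangle_{\rho_{j,k}})$ bounded independently of $j,k$. Then $\widehat T_k\hel\bB(0,\rho_{j,k})$ and $\langle\widehat T_k\rangle_{\rho_{j,k}}$ are supported in the fixed compact $\bB(0,R_0+j+1)$ with uniformly bounded $\calM_{H,|\cdot|}$ and boundary $\calM_{H,|\cdot|}$ (the boundary of the former is $B-\langle\widehat T_k\rangle_{\rho_{j,k}}$, the latter is a cycle), so by \ref{compactness} both families are $\calF$ precompact. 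Diagonalizing over $j$, extract a single subsequence --- still indexed by $k$ --- along which, for every $j$, $\widehat T_k\hel\bB(0,\rho_{j,k})\to S_j$ and $\langle\widehat T_k\rangle_{\rho_{j,k}}\to Z_j$ in $\calF$, with $S_j,Z_j\in\calR_m(X,G)$, $\rmsupp S_j\subset\bB(0,R_0+j+1)$, $\rmsupp Z_j\subset\{u\geq R_0+j\}$, $\partial S_j=B-Z_j$, $\sup_j\calM_{H,|\cdot|}(S_j)<\infty$, and --- this is where hypothesis (C) enters, together with the fact that restriction to a ball does not increase $\calM_H$ --- $\calM_H(S_j)\leq\liminf_k\calM_H(\widehat T_k)=\inf(\calP)$. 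Moreover, for $j<j'$ the chains $\widehat T_k\hel\bB(0,\rho_{j,k})$ and $\widehat T_k\hel\bB(0,\rho_{j',k})$ differ by a chain supported in the closed set $\{u\geq R_0+j\}$, hence so do $S_j$ and $S_{j'}$; consequently $S_j\hel\bB(0,\rho)=S_{j'}\hel\bB(0,\rho)=:T_\rho\in\calR_m(X,G)$ for all $\rho<R_0+j$ outside a fixed $\calL^1$ null exceptional set $N$, and $T_\rho\hel\bB(0,\rho')=T_{\rho'}$ for such $\rho'<\rho$.

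\emph{Step 3: assembling the limit and concluding.} For rectifiable chains mass is additive over disjoint Borel sets, so along an increasing sequence $\rho_n\uparrow\infty$ of radii outside $N$, $\sum_n\calM_{H,|\cdot|}(T_{\rho_{n+1}}-T_{\rho_n})=\lim_N\calM_{H,|\cdot|}(T_{\rho_N})-\calM_{H,|\cdot|}(T_{\rho_1})\leq\sup_j\calM_{H,|\cdot|}(S_j)<\infty$; hence $(T_{\rho_n})_n$ is $\calM_{H,|\cdot|}$ Cauchy and converges in $\calF$ to some $T\in\calF_m(X,G)$. Since $T-T_{\rho_n}$ is a flat limit of chains supported in $\{u\geq\rho_n\}$, one gets $T\hel\bB(0,\rho)=T_\rho$ for $\rho\notin N$, whence $T\in\calR^{\rmloc}_m(X,G)$ (each $T\hel\bB(0,\rho)$ being rectifiable). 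For the boundary, take $r\notin N$ with $R_0<r<R_0+j$; then $\langle T\rangle_r=\langle S_j\rangle_r$ and $T\hel\bB(0,r)=S_j\hel\bB(0,r)$ (locality of slices and $r<R_0+j$), so $(\partial T)\hel\bB(0,r)=\partial(T\hel\bB(0,r))+\langle T\rangle_r=(\partial S_j)\hel\bB(0,r)=(B-Z_j)\hel\bB(0,r)=B$, using that $\rmsupp Z_j\subset\{u\geq R_0+j\}$ is disjoint from $\bB(0,r)$ and that $\rmsupp B\subset\bB(0,R_0)$; as this holds for arbitrarily large $r$, $\partial T=B$. Finally, for any pair $(U,R)$ admissible in the definition of $\calM_H(T)$, choosing $\rho\notin N$ with the closed unit neighbourhood of $U$ inside $\bB(0,\rho)$ and $j$ with $R_0+j>\rho$ forces $R\hel U=T_\rho\hel U=S_j\hel U$, hence $\calM_H(R\hel U)\leq\calM_H(S_j)\leq\inf(\calP)$; taking the supremum, $\calM_H(T)\leq\inf(\calP)$. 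Since $T$ is itself an admissible competitor, $\calM_H(T)=\inf(\calP)$, so $T$ solves $(\calP)$.

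The crux is the absence of any compact support control: the minimizing sequence cannot be confined to a fixed ball, whereas truncating a chain at a large ball destroys the boundary condition --- this is precisely why the cone-off of the spherical slice in Step 1 is needed (and why the minimizer is only guaranteed in $\calR^{\rmloc}_m(X,G)$), and why one must work with the coherent family $\{T_\rho\}$ rather than with a single flat limit of the $\widehat T_k$. Apart from the lower semicontinuity hypothesis (C), the one estimate that genuinely does the work is the telescoping bound $\sum_n\calM_{H,|\cdot|}(T_{\rho_{n+1}}-T_{\rho_n})<\infty$ of Step 3 --- forced by the uniform mass bound and the additivity of mass across annuli --- which is what turns the coherent family into an honest flat chain of finite mass; the remaining points (exceptional radii, the slicing identity, locality of slices, and supports of flat limits) are routine.
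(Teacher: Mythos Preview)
Your proof is correct, but it takes a genuinely different route from the paper's. Both arguments perform a diagonal extraction over increasing balls and then must show that the resulting local limits assemble into a single flat chain $T$. The paper does this by first proving \emph{tightness} of the minimizing sequence,
\[
\lim_{r\to\infty}\sup_k\calM_H\bigl(T_k\hel\bB_{\|\cdot\|}(0,r)^c\bigr)=0,
\]
via a cone--off comparison at a well chosen slice radius (this is the main estimate in the paper's proof, following \cite{AMB.SCH}); tightness then forces the diagonal family $(\hat T_j)_j$ to be $\calF$ Cauchy. You bypass tightness entirely: you extract limits $S_j$ of the truncations $\widehat T_k\hel\bB(0,\rho_{j,k})$, observe that $S_{j'}-S_j$ is supported in $\{u\geq R_0+j\}$ (coherence), and then glue the $S_j$ into $T$ using the telescoping mass bound $\sum_n\calM_{H,|\cdot|}(T_{\rho_{n+1}}-T_{\rho_n})\leq\sup_j\calM_{H,|\cdot|}(S_j)<\infty$. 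What the paper's approach buys is the tightness statement itself, which is of independent interest and says something about \emph{every} minimizing sequence; what your approach buys is economy --- you never need the comparison estimate that produces tightness, only the elementary additivity of mass over disjoint annuli. Two small remarks: in Step~2 you wrote $Z_j\in\calR_m(X,G)$ where $\calR_{m-1}(X,G)$ is meant; and your boundary computation via ``locality of slices'' can be replaced by the one--line observation that $\rmsupp(T-S_j)\subset\{u\geq R_0+j\}$ forces $\rmsupp\partial(T-S_j)\subset\{u\geq R_0+j\}$, hence $(\partial T)\hel\bB(0,r)=(\partial S_j)\hel\bB(0,r)=B$ directly.
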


\begin{Empty}
Two comments are in order.
\begin{enumerate}
\item[(1)] The nontrivial assumption is (C), that the Hausdorff mass be lower semicontinuous. The remaining sections of this paper are devoted to establishing hypothesis (C) in case $m=2$ or $m = \dim X -1$ and $G$ is arbitrary.
\item[(2)] There is another technical problem with applying the direct method of calculus of variations. In order to apply the compactness theorem \ref{compactness} one would need to exhibit a minimizing sequence supported in some given compact set. {\it In case $X$ is Euclidean} this is done as follows. If $T$ is such that $\partial T = B$ and $\rmsupp B \subset \bB(0,R)$, one considers the nearest point projection $f : X \to \bB(0,R)$. Since $\rmLip f \leq 1$ one has $\calM_H(f_\# T) \leq \calM_H(T)$, and also $\partial f_\# T = f_\# \partial T = B$. Pushing forward along $f$ each member of a given minimizing sequence produces a new minimizing sequence of chains all supported in $\bB(0,R)$. In case the norm is not Euclidean this process cannot be repeated as the map
\begin{equation*}
f : X \to \bB(0,1) : x \mapsto \begin{cases}
x & \text{ if }\|x\| \leq 1 \\
\frac{x}{\|x\|} & \text{ if }\|x\| > 1 
\end{cases}
\end{equation*}
has merely $\rmLip f \leq 2$ and no better in general (as can be seen by considering $\|x\|_1 = \sum_{j=1}^N |x_j|$ in $\R^N$). This leads one to show that a minimizing sequence is necessarily {\it tight} and perform a diagonal argument as in \cite{AMB.SCH}. Here we are not able to do better in general. Yet in the last section, see \ref{existence.convex}, we show that if $m = \dim X -1$ then there exists a minimizing chain with compact support (contained in the convex hull of $\rmsupp B$). 
\end{enumerate}
\end{Empty}

\begin{proof}
Let $T_1,T_2,\ldots$ be a minimizing sequence. We may readily assume each $T_k \in \calR_m(X,G)$. As in \cite[Proof of Theorem 1.1]{AMB.SCH} we start by showing that this sequence is tight:
\begin{equation}
\label{eq.thu.100}
\lim_{r \to \infty} \sup_{k=1,2,\ldots} \calM_H\left(T_k \hel \bB_{\|\cdot\|}(0,r) \right) = 0 \,.
\end{equation}
Let $\gamma = \inf (\calP)$, $\veps > 0$, $\alpha = \veps^{-1} \sup_{k=1,2,\ldots} \calM_H(T_k-T_1) < \infty$, choose $r > 0$ such that $\calM_H \left(T_1 \hel \bB_{\|\cdot\|}(0,r)^c \right) \leq \veps$ and choose $h > 0$ such that 
\begin{equation}
\label{eq.thu.101}
\alpha < \int_r^{r+h} r^{-1}d\calL^1(r) \,.
\end{equation}
Fix $k=1,2,\ldots$. Since
\begin{equation}
\label{eq.thu.102}
\int_r^{r+h} \calM_H(\la T_k - T_1 , \|\cdot\| , \rho \ra) d\calL^1(\rho) \leq 2C_{m,1}\calM_H(T_k-T_1)
\end{equation} 
according to \cite[3.7.1(9)]{DEP.HAR.07}, there exists $r \leq R_k \leq r+h$ such that $\la T_k - T_1 , \|\cdot\| , \rho \ra \in \calR_{m-1}(X,G)$ and
\begin{equation}
\calM_H(\la T_k - T_1 , \|\cdot\| , \rho \ra)  \leq R_k^{-1}  2C_{m,1}\calM_H(T_k-T_1) \alpha^{-1} \,,
\end{equation}
for if not the combination of \eqref{eq.thu.101} and \eqref{eq.thu.102} would lead to a contradiction. We let $S_k = \bdelta_0 \cone \la T_k - T_1 , \|\cdot\| , \rho \ra \in \calR_m(X,G)$ and we recall that $\calM_H(S_k) \leq 2C'_m R_k \calM_H(\la T_k - T_1 , \|\cdot\| , \rho \ra) \leq 4C'_mC_{m,1} \veps$ (see for instance \cite[\S 2.5]{DEP.13.approx}) and $\partial S_k = \la T_k - T_1 , \|\cdot\| , \rho \ra$ since $\partial \la T_k - T_1 , \|\cdot\| , \rho \ra = - \la \partial (T_k - T_1) , \|\cdot\| , \rho \ra = 0$. Since also $\la T_k - T_1 , \|\cdot\| , \rho \ra = \partial (T_k-T_1) \hel \bB_{\|\cdot\|}(0,R_k)$ we see that $\partial R_k = B$, where $R_k = (T_k-T_1) \hel \bB_{\|\cdot\|}(0,R_k) + T_1 - S_k$. Therefore $\gamma \leq \calM_H(R_k)$. Now $R_k = T_k \hel \bB_{\|\cdot\|}(0,R_k) + T_1 \hel \bB_{\|\cdot\|}(0,R_k)^c + S_k$ and accordingly
\begin{equation}
\begin{split}
\gamma \leq \calM_H(R_k)& \leq \calM_H \left( T_k \hel \bB_{\|\cdot\|}(0,r+h) \right) + \calM_H \left( T_1 \hel \bB_{\|\cdot\|}(0,r)^c\right) + \calM_H(S_k)\\
& \leq \calM_H(T_k) - \calM_H \left( T_k \hel \bB_{\|\cdot\|}(0,r+h)^c \right) + (1 + 4C'_mC_{m,1})\veps \,.
\end{split}
\end{equation}
Choosing $k_0$ such that $\calM_H(T_k) - \gamma \leq \veps$ whenever $k \geq k_0$ it follows that
\begin{equation}
\sup_{k=k_0,k_0+1,\ldots} \calM_H \left( T_k \hel \bB_{\|\cdot\|}(0,r+h)^c \right) \leq (2 + 4C'_mC_{m,1})\veps \,.
\end{equation}
As $\lim_{\rho \to \infty} \calM_H\left( T_k \hel \bB_{\|\cdot\|}(0,\rho)^c \right) = 0$ for each $k=1,\ldots,k_0-1$ it readily follows that
\begin{equation}
\limsup_{\rho \to \infty} \sup_{k=1,2,\ldots} \calM_H\left( T_k \hel \bB_{\|\cdot\|}(0,\rho)^c \right) \leq (2 + 4C'_mC_{m,1})\veps \,.
\end{equation}
Since $\veps > 0$ is arbitrary \eqref{eq.thu.100} is established.
\par 
We consider all $j=1,2,\ldots$ large enough for $\rmsupp B \subset \bU_{\|\cdot\|}(0,j)$. Applying inductively the compactness theorem to suitable subsequences of $\big(T_k \hel \bB_{\|\cdot\|}(0,j)\big)_k$ the diagonal argument yields one subsequence of $(T_k)_k$, still denoted as such, and one sequence $(\hat{T}_j)_j$ of members of $\calR_m(X,G)$ such that $\calF \left( \hat{T}_j - T_k \hel \bB_{\|\cdot\|}(0,j)\right) \to 0$ as $k \to \infty$, for every $j$. We show that $(\hat{T}_j)_j$ is $\calF$ Cauchy. Given $\veps > 0$ we choose $r > 0$ such that the supremum in \eqref{eq.thu.100} is bounded above by $\veps$. Given $j_1,j_2$ larger than $r$, there exists $k_i$, $i=1,2$, such that $\calF \left( \hat{T}_{j_i} - T_k \hel \bB_{\|\cdot\|}(0,j_i) \right) \leq \veps$ whenever $k \geq k_i$. For $k=\max\{k_1,k_2\}$ we thus have 
\begin{equation*}
\begin{split}
\calF\left(\hat{T}_{j_1} - \hat{T}_{j_2} \right) & \leq \calF \left(\hat{T}_{j_1} - T_k \hel \bB_{\|\cdot\|}(0,j_1) \right) \\&\quad\quad + \calF \left(T_k \hel \bB_{\|\cdot\|}(0,j_1) - T_k \hel \bB_{\|\cdot\|}(0,j_2) \right)\\&\quad\quad + \calF \left( T_k \hel \bB_{\|\cdot\|}(0,j_2) - \hat{T}_{j_2}\right)\\
&\leq 3 \veps \,.
\end{split}
\end{equation*}
Let $T \in \calF_m(X,G)$ be the $\calF$ limit of this sequence. Given a bounded open subset $U \subset X$, $u(x) = \rmdist_{\|\cdot\|}(U,x)$, and $r_0 > 0$ we select $j_0$ sufficiently large for $\bB_{\|\cdot\|}(U,r_0) \subset  \bU_{\|\cdot\|}(0,j_0)$. We first recall that for $\calL^1$ almost every $0< r \leq r_0$ one has $\calF \left( T \hel \{ u < r \} - \hat{T}_j \hel \{ u < r \}\right) \to 0$ as $j \to \infty$, \cite[5.2.3(2)]{DEP.HAR.07}. Furthermore if $j \geq j_0$ then $\hat{T}_j \hel \{ u < r \} = \hat{T}_{j_0} \hel \{u<r\}$ by the definition of $\hat{T}_j$. Therefore $T \hel \{ u < r \} = \hat{T}_{j_0} \hel \{ u < r \}$, and thus $T \in \calR_m^{\rmloc}(X,G)$. Now clearly $\partial \hat{T}_j = B$ for each $j$ and thus $\partial T = B$. Finally it remains to show that $\calM_H(T) \leq \gamma$. Let $U \subset X$ be open and bounded and choose $j_0$ such that $\bB_{\|\cdot\|}(U,1) \subset \bU_{\|\cdot\|}(0,j_0)$. Now if $R \in \calR_m(X,G)$ is so that $ R \hel \{ u < r \} = T \hel \{ U < r \}$ for almost every $0 < r \leq r_0$ then also $R  \hel \{ u < r \} = \hat{T}_{j_0} \hel \{ u < r \}$ for such $r$. Therefore
\begin{equation*}
\calM_H(R \hel U) \leq \calM_H\left(\hat{T}_{j_0}\right) \leq \liminf_k \calM_H \left( T_k \hel \bB_{\|\cdot\|}(0,j_0) \right) \leq \gamma \,,
\end{equation*}
according to hypothesis (C).
Since $U$ is arbitrary we conclude that $\calM_H(T) = \gamma$.
\end{proof}

\section{A general criterion for lower semicontinuity}

We will need the following technical fact. Given $T,T_1,T_2,\ldots$ in $\calF_m(X,G)$ we say that $T_1,T_2,\ldots$ {\em converges rapidly} to $T$ if $\sum_j \calF(T_j-T) < \infty$. Of course any convergent sequence admits a rapidly convergent subsequence. The following is a consequence for instance of \cite[5.2.1(3)]{DEP.HAR.07}.

\begin{Proposition}
\label{rapid} Assume that $T,T_1,T_2,\ldots$ belong to $\calF_m(X;G)$ and that $(T_j)_j$ converges rapidly to $T$. If $u : X \to \R$ is Lipschitzian then $\calF(T \hel \{u > r \} - T_j \hel \{u > r \}) \to 0$ as $j \to \infty$, for almost every $r \in \R$.
\end{Proposition}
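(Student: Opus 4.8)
The plan is to reduce the assertion to a Borel--Cantelli argument resting on a coarea-type bound for the flat norm of restrictions. First I would set $S_j = T - T_j \in \calF_m(X,G)$, so that $\sum_j \calF(S_j) < \infty$ by the rapid-convergence hypothesis, and observe that for $\calL^1$ almost every $r$ all the restrictions in sight are defined and $T \hel \{u>r\} - T_j \hel \{u>r\} = S_j \hel \{u>r\}$, by additivity of the restriction operator. Hence it suffices to prove that $\calF(S_j \hel \{u>r\}) \to 0$ for $\calL^1$ almost every $r$; and since $\R$ is exhausted by countably many bounded intervals, it is enough to fix such an interval $I$ and establish
\begin{equation}
\label{eq.rapid.est}
\int_I \calF\bigl(S_j \hel \{u>r\}\bigr)\, d\calL^1(r) \leq C_I \bigl(\calF(S_j) + 2^{-j}\bigr),
\end{equation}
with $C_I$ independent of $j$. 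Granting \eqref{eq.rapid.est}, summing over $j$ and applying Tonelli's theorem gives $\int_I \sum_j \calF(S_j \hel \{u>r\})\, d\calL^1(r) < \infty$, whence $\sum_j \calF(S_j \hel \{u>r\}) < \infty$ --- so its general term tends to $0$ --- for $\calL^1$ almost every $r \in I$, as desired.

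To obtain \eqref{eq.rapid.est}, for each $j$ I would choose a nearly optimal flat filling $S_j = Q_j + \partial R_j$ with $\calM_{H,|\cdot|}(Q_j) + \calM_{H,|\cdot|}(R_j) \leq \calF(S_j) + 2^{-j}$. For $\calL^1$ almost every $r$ the restrictions $Q_j \hel \{u>r\}$, $R_j \hel \{u>r\}$ and the slice $\la R_j , u , r \ra$ are defined, restriction is additive, and the boundary--slice identity $(\partial R_j) \hel \{u>r\} = \partial\bigl(R_j \hel \{u>r\}\bigr) \pm \la R_j , u , r \ra$ holds, so that
\begin{equation*}
S_j \hel \{u>r\} = Q_j \hel \{u>r\} + \partial\bigl(R_j \hel \{u>r\}\bigr) \pm \la R_j , u , r \ra \,.
\end{equation*}
Using $\calF(A) \leq \calM_{H,|\cdot|}(A)$, $\calF(\partial A) \leq \calM_{H,|\cdot|}(A)$ and the monotonicity of mass under restriction, this yields $\calF\bigl(S_j \hel \{u>r\}\bigr) \leq \calM_{H,|\cdot|}(Q_j) + \calM_{H,|\cdot|}(R_j) + \calM_{H,|\cdot|}(\la R_j , u , r \ra)$. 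Integrating over $I$: the first two terms are constant in $r$ and contribute at most $|I|\bigl(\calM_{H,|\cdot|}(Q_j) + \calM_{H,|\cdot|}(R_j)\bigr)$, while the coarea slicing inequality \cite[3.7.1(9)]{DEP.HAR.07} bounds $\int_I \calM_{H,|\cdot|}(\la R_j , u , r \ra)\, d\calL^1(r)$ by $C\,\rmLip(u)\,\calM_{H,|\cdot|}(R_j)$ for a constant $C$ depending only on $m$. Adding these gives \eqref{eq.rapid.est} with $C_I = |I| + C\,\rmLip(u)$.

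The only substantive ingredient is the coarea slicing inequality for flat $G$ chains \cite[3.7.1(9)]{DEP.HAR.07} (invoked in the same spirit in the proof of \ref{general.existence}); the rest is the elementary measure-theoretic bookkeeping above. Indeed \eqref{eq.rapid.est} is in essence the statement of \cite[5.2.1(3)]{DEP.HAR.07}, so one may alternatively quote that directly and pass straight to the Borel--Cantelli step. I do not anticipate a real obstacle here; the only point needing mild care is that the exceptional null set of $r$'s --- coming from the definition of the restrictions, from the slicing identity, and from the choices of fillings --- is a countable union of $\calL^1$-null sets and hence itself null.
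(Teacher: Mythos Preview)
Your proposal is correct and matches the paper's approach exactly: the paper gives no proof at all, simply stating that the proposition ``is a consequence for instance of \cite[5.2.1(3)]{DEP.HAR.07}''. You have unpacked precisely that reference --- the coarea slicing bound combined with a Borel--Cantelli/Tonelli argument --- and you even note this yourself at the end of your write-up.
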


\begin{Theorem}
\label{theorem.1}
The following are equivalent.
\begin{enumerate}
\item[(1)] The triple $\big((X,\|\cdot\|),(G,\lno \cdot \rno),m\big)$ satisfies the triangle inequality for cycles.
\item[(2)] The Hausdorff mass $\calM_H : \calP_m(X;G) \to \R_+$ is lower semicontinuous with respect to $\calF$ convergence.
\end{enumerate}
\end{Theorem}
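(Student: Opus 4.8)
The plan is to prove the two implications separately; the passage $(2)\Rightarrow(1)$ is elementary, while $(1)\Rightarrow(2)$ carries the analytic content. I read condition (1) as the statement that an oriented simplex with coefficient $g\lseg\sigma\rseg$ ($g\in G$, $\sigma$ an $m$-simplex) is $\calM_H$-minimal among the polyhedral $m$-chains sharing its boundary: whenever $Q\in\calP_m(X,G)$ and $g\lseg\sigma\rseg-Q=\partial R$ for some $R\in\calP_{m+1}(X,G)$, then $\calM_H(g\lseg\sigma\rseg)\leq\calM_H(Q)$. (Since every compactly supported polyhedral cycle is a polyhedral boundary via a cone, this is the same as requiring the inequality whenever $g\lseg\sigma\rseg-Q$ is merely a cycle.)

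\textbf{From (2) to (1).} Assume $\calM_H$ is lower semicontinuous on $\calP_m(X,G)$ and let $g\lseg\sigma\rseg-Q=\partial R$. Fix once and for all a constant $c\in(0,1)$ small enough that, for every $N$, a fixed cube $C_0\subset\sigma$ splits into $N^m$ congruent subcubes, each carrying a translate $\sigma_\ell=\lambda_N\sigma+x_\ell$ of the scaled simplex $\lambda_N\sigma$, with $\lambda_N=c^{1/m}/N$. Each $\phi_\ell\colon x\mapsto\lambda_N x+x_\ell$ is a similarity of $(X,\|\cdot\|)$ of ratio $\lambda_N$, so $\phi_{\ell\#}$ multiplies $m$-dimensional Hausdorff mass by $\lambda_N^m$ and $(m+1)$-dimensional mass by $\lambda_N^{m+1}$. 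Putting $Q_\ell:=\phi_{\ell\#}Q$, $R_\ell:=\phi_{\ell\#}R$ and
\[
P_N:=g\lseg\sigma\rseg-\sum_\ell g\lseg\sigma_\ell\rseg+\sum_\ell Q_\ell\in\calP_m(X,G),
\]
pushing $\partial Q=\partial(g\lseg\sigma\rseg)$ forward by $\phi_\ell$ gives $\partial Q_\ell=\partial(g\lseg\sigma_\ell\rseg)$, so the interior boundaries cancel and $\partial P_N=\partial(g\lseg\sigma\rseg)$; likewise $P_N-g\lseg\sigma\rseg=-\partial\sum_\ell R_\ell$, whence $\calF(P_N-g\lseg\sigma\rseg)\leq N^m\lambda_N^{m+1}\calM_{H,|\cdot|}(R)=c^{1+1/m}N^{-1}\calM_{H,|\cdot|}(R)\to 0$. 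Since the $\sigma_\ell\subset\sigma$ are pairwise nonoverlapping, the chain $g\lseg\sigma\rseg-\sum_\ell g\lseg\sigma_\ell\rseg$ has mass $(1-N^m\lambda_N^m)\calM_H(g\lseg\sigma\rseg)$, so subadditivity and $N^m\lambda_N^m=c$ give $\calM_H(P_N)\leq(1-c)\calM_H(g\lseg\sigma\rseg)+c\,\calM_H(Q)$. Lower semicontinuity then forces $\calM_H(g\lseg\sigma\rseg)\leq(1-c)\calM_H(g\lseg\sigma\rseg)+c\,\calM_H(Q)$, i.e.\ $\calM_H(g\lseg\sigma\rseg)\leq\calM_H(Q)$. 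The point of inserting $\sim N^m$ perforations of size $\sim1/N$, rather than one, is that one perforation alone has a vanishing mass defect $O(\lambda^m)$, whereas many of them hold the defect at a fixed $c$ while the flat error still tends to $0$.

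\textbf{From (1) to (2).} Let $P_k\to P$ in $\calF$, all polyhedral; pass to a subsequence along which $\calM_H(P_k)\to\liminf_k\calM_H(P_k)$, then to a further subsequence converging rapidly to $P$. Fix $\veps>0$ and decompose $P=\sum_{j=1}^J g^j\lseg\tau_j\rseg$ into pairwise nonoverlapping simplices $\tau_j\subset V_j$ (affine $m$-planes). For each $j$ I would select a subsimplex $\tau_j^\circ$ in the relative interior of $\tau_j$ with $\calM_H(g^j\lseg\tau_j\rseg)-\calM_H(g^j\lseg\tau_j^\circ\rseg)<\veps/J$, together with a bounded open polytope $U_j=\{u_j<r_j\}$ ($u_j$ piecewise linear, $r_j$ a generic level) such that: the $U_j$ are pairwise disjoint; $U_j$ meets $\rmsupp P$ only in the relative interior of $\tau_j^\circ$, so $P\hel U_j=g^j\lseg\tau_j^\circ\rseg$; and $P_k\hel U_j$ is polyhedral with $\veps_k^j:=\calF(g^j\lseg\tau_j^\circ\rseg-P_k\hel U_j)\to 0$ as $k\to\infty$, by \ref{rapid}. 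Choosing $Q_k^j\in\calP_m$, $S_k^j\in\calP_{m+1}$ with $g^j\lseg\tau_j^\circ\rseg-P_k\hel U_j=Q_k^j+\partial S_k^j$ and $\calM_{H,|\cdot|}(Q_k^j)\leq 2\veps_k^j$, the chain $g^j\lseg\tau_j^\circ\rseg-(P_k\hel U_j+Q_k^j)=\partial S_k^j$ is a polyhedral cycle, so (1) applied to $\tau_j^\circ\subset V_j$ yields $\calM_H(g^j\lseg\tau_j^\circ\rseg)\leq\calM_H(P_k\hel U_j+Q_k^j)\leq\calM_H(P_k\hel U_j)+C\veps_k^j$, with $C$ absorbing the comparison of $\calM_{H,\|\cdot\|}$ and $\calM_{H,|\cdot|}$. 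Summing over $j$, using $\sum_j\calM_H(P_k\hel U_j)\leq\calM_H(P_k)$ by disjointness, and letting $k\to\infty$ gives $\sum_j\calM_H(g^j\lseg\tau_j^\circ\rseg)\leq\liminf_k\calM_H(P_k)$; the left side is at least $\calM_H(P)-\veps$, and $\veps$ is arbitrary, so $\calM_H(P)\leq\liminf_k\calM_H(P_k)$.

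\textbf{Main obstacle.} The substance lies in $(1)\Rightarrow(2)$, and specifically in the localisation: one must arrange the open sets $U_j$ so that they are simultaneously pairwise disjoint, each sees exactly one simplex of $P$, each is a sublevel set of a piecewise-linear function along which the flat convergence $P_k\to P$ localises (this is precisely where \ref{rapid} and the reduction to a rapidly convergent sequence are used), and each restriction $P_k\hel U_j$ remains polyhedral so that the flat filling $(Q_k^j,S_k^j)$ is a legitimate input to condition (1). Once that bookkeeping is in place the estimate is essentially forced. The direction $(2)\Rightarrow(1)$ presents no comparable difficulty beyond the calibration of the number and scale of the perforations noted above.
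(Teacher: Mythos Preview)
Your proposal is correct and follows essentially the same strategy as the paper. In both directions the ideas coincide: for $(2)\Rightarrow(1)$ the paper argues by contraposition but builds the very same ``many small perforations'' competitor (it tiles $\sigma$ by scaled affine copies $f_{j,\alpha}(\sigma)$ indexed by a lattice $I_j$ rather than by subcubes, so its mass-defect factor tends to $1$ rather than to your fixed $c$, but the mechanism is identical); for $(1)\Rightarrow(2)$ the paper does exactly your localisation, taking $U_k=\mathring{\sigma}_k+V_k$ with $u_k=\rmdist(\cdot,U_k^c)$, passing to a rapidly convergent subsequence, invoking Proposition~\ref{rapid} at a common good level $r$, and then applying the single-simplex case to each $P\hel\{u_k>r\}$.
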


\begin{proof}
In order to establish that $(1) \Rightarrow (2)$ we let $P,P_1,P_2,\ldots$ belong to $\calP_m(X;G)$ and be such that $\calF(P-P_j) \to 0$ as $j \to \infty$. Thus $P-P_j = Q_j + \partial R_j$, $Q_j \in \calP_m(X;G)$, $R_j \in \calP_{m+1}(X;G)$ and $\calM_H(Q_j) + \calM_H(R_j) \to 0$ as $j \to \infty$. Considering first the case when $P = g \lseg \sigma \rseg$, we notice that $\partial (P-P_j-Q_j) = \partial(\partial R_j)=0$ whence $\calM_H(P) \leq \calM_H(P_j + Q_j)$ follows from hypothesis (1) for all $j=1,2,\ldots$. Accordingly, $\calM_H(P) \leq \liminf_j \calM_H(P_j + Q_j) = \liminf_j \calM_H(P_j)$. Turning to the general case we write $P= \sum_{k=1}^\kappa g_k \lseg \sigma_k \rseg$ where the $\sigma_k$ are nonoverlapping. Letting $W_k \in \bG_m(X)$ have a translate containing $\sigma_k$, we define $U_k = \mathring{\sigma}_k + V_k$ where $\mathring{\sigma}_k$ is the relative interior of $\sigma_k$ and $V_k$ is a convex polyhedral neighborhood of zero in some complementary subspace of $W_k$. Choosing those $V_k$ small enough we can assume that $U_1,\ldots,U_\kappa$ are pairwise disjoint. We next define $u_k = \rmdist(\cdot,U_k^c)$. We observe that (A) the sets $\sigma_k \cap \{ u_k > r \}$ are simplicial (i.e. of the type $\sigma_{k,r}$) and (B) the sets $\{ u_k > r \}$ are (convex open) polyhedra. Given $\veps > 0$ we choose $\rho > 0$ such that $\calH^m(\sigma_k \cap \{ u_k \leq r \}) \leq \veps \kappa^{-1} \lno g_k \rno^{-1}$ whenever $0 < r \leq \rho$ and $k=1,\ldots,\kappa$, whence also $\calM_H(P) \leq \veps + \sum_{k=1}^\kappa \calM_H(P \hel \{u_k > r\})$. Replacing the original sequence $(P_j)_j$ by a subsequence if necessary we may assume that $\liminf_j \calM_H(P_j) = \lim_j \calM_H(P_j)$ and that $(P_j)_j$ converges rapidly to $P$. Thus there exists $0 < r \leq \rho$ such that $\calF(P \hel \{u_k > r \} - P_j \hel \{u_k > r\}) \to 0$ as $j \to \infty$ for each $k=1,\ldots,\kappa$ according to \ref{rapid}. It then follows from observations (A) and (B) above, and from the first case of this proof, that
\begin{equation*}
\calM_H(P \hel \{u_k > r \}) \leq \liminf_j \calM_H( P_j \hel \{u_k > r\})\,.
\end{equation*}
In turn,
\begin{multline*}
\calM_H(P) - \veps \leq \sum_{k=1}^\kappa \calM_H(P \hel \{u_k > r\}) \leq \sum_{k=1}^\kappa \liminf_j \calM_H (P_j \hel \{u_k > r \}) \\
\leq \liminf_j \sum_{k=1}^\kappa \calM_H (P_j \hel \{u_k > r \}) \leq \lim_j \calM_H(P_j) \,.
\end{multline*}
Since $\veps > 0$ is arbitrary the proof is complete.
\par 
We prove $(2) \Rightarrow (1)$ by contraposition. Let $\sigma_1,\ldots,\sigma_\kappa$ be simplexes in $X$ and $g_1,\ldots,g_\kappa$ be elements of $G$ such that
\begin{equation}
\label{eq.1}
\eta := \calM_H(g_1\lseg\sigma_1\rseg) - \sum_{k=2}^\kappa \calM_H(g_k \lseg \sigma_k \rseg) > 0 
\end{equation}
and $\partial (P-Q)=0$ where we have abbreviated $P = g_1 \lseg \sigma_1 \rseg$ and $Q=\sum_{k=2}^\kappa g_k \lseg \sigma_k \rseg$. There is no restriction to assume that $\sigma$ does not overlap any of the $\sigma_k$, $k=2,\ldots,\kappa$. If indeed $\sigma_1$ and some $\sigma_k$ overlap then we replace the summand $g_k \lseg \sigma_k \rseg$ in $Q$ by a sum $\begin{displaystyle}S_{k,\delta} = \sum_{i=0}^m g_k\bdelta_{p_{k,\delta}} \cone \lseg \tau_i \rseg\end{displaystyle}$ where $\tau_i$ runs over the facets of $\sigma_k$ (properly oriented so that the cycle condition is preserved) and where $p_{k,\delta}$ belongs to $X$, but not to the affine plane containing $\sigma_k$, and is a distance $\delta$ apart from (say) the barycenter of $\mathring{\sigma}_k$. It follows from \ref{25} that $\calM_H(S_{k,\delta}) \to \calM_H(g_k \lseg \sigma_k \rseg)$ as $\delta \to 0$. Thus \eqref{eq.1} is preserved upon choosing $\delta > 0$ small enough, and clearly the new simplexes replacing $\sigma_k$ do not overlap $\sigma_1$.
\par 
Let $x_0,x_1,\ldots,x_m \in X$ be such that $\lseg \sigma_1 \rseg = \lseg x_0,x_1,\ldots,x_m \rseg$. For an integer $j$ we define $I_j = \bbN^m \cap \{ (\alpha_1,\ldots,\alpha_m) : \alpha_1 + \ldots + \alpha_m \geq j-1 \}$ and corresponding to each $\alpha \in I_j$ we define an affine bijection $f_{j,\alpha} : X \to X$ by the formula
\begin{equation*}
f_{j,\alpha}(x) = \left( x_0 + \sum_{i=1}^m \alpha_i \left(\frac{x_i-x_0}{j}\right) \right) + \frac{x-x_0}{j} \,,
\end{equation*}
and we notice that $\rmLip f_{j,\alpha} = j^{-1}$ and that $\calM_H(f_{j,\alpha\,\#}g_1 \lseg \sigma_1 \rseg) = j^{-m} \calM_H( g_1 \lseg \sigma_1 \rseg)$. We introduce
\begin{equation*}
P_j = P + \sum_{\alpha \in I_j} f_{j,\alpha\,\#}(Q-P) \in \calP_m(X;G) \,.
\end{equation*}
Since $\partial (Q-P)=0$ there exists $R \in \calP_{m+1}(X;G)$ such that $Q-P = \partial R$. Therefore
\begin{multline*}
\calF(P_j-P) = \calF \left( \partial \sum_{\alpha \in I_j} f_{j,\alpha\,\#} R \right) \leq \calM_H\left( \sum_{\alpha \in I_j} f_{j,\alpha\,\#} R \right) \\
\leq (\rmcard I_j) \max_{\alpha \in I_j}(\rmLip f_{j,\alpha})^{m+1} \calM_H(R) \leq j^m \left(\frac{1}{j}\right)^{m+1} \calM_H(R)\,,
\end{multline*}
whence $\calF(P_j-P) \to 0$ as $j \to \infty$. In order to estimate the mass of $P_j$ we note that 
\begin{equation*}
P_j = P - \sum_{\alpha \in I_j} f_{j,\alpha\,\#} P + \sum_{\alpha \in I_j} f_{j,\alpha\,\#} Q
\end{equation*}
and that the $f_{j,\alpha\,\#} P$ are nonoverlapping subchains of $P$. Therefore
\begin{equation*}
\begin{split}
\calM_H(P_j) & \leq  \calM_H(P) -  \sum_{\alpha \in I_j} \calM_H \left(f_{j,\alpha\,\#}P \right) +  \sum_{\alpha \in I_j} \calM_H \left(f_{j,\alpha\,\#}Q \right)\\
& \leq \calM_H(P) \left( 1 - \left( \frac{1}{j}\right)^m (\rmcard I_j) \right) + \calM_H(Q) \left( \frac{1}{j}\right)^m (\rmcard I_j) \\
& = \calM_H(P) - \eta \left( \frac{1}{j}\right)^m (\rmcard I_j) \,.
\end{split}
\end{equation*}
Since $\lim_j (\rmcard I_j) j^{-m} = 1/2$ we infer that
\begin{equation*}
\liminf_j \calM_H(P_j) \leq \calM_H(P) - \frac{\eta}{2} < \calM_H(P) \,.
\end{equation*}
\end{proof}

We will need two versions of approximation of rectifiable $G$ chains. One is established in \cite[Theorem 4.2]{DEP.13.approx} and the other one is below.

\begin{Theorem}[Approximation Theorem]
\label{approximation}
Let $T \in \calR_m(X,G)$ and $\veps > 0$. There exist $P \in \calP_m(X,G)$ and $f : X \to X$ a diffeomorphism of class $C^1$ with the following properties:
\begin{enumerate}
\item[(1)] $\max \{ \rmLip_{\|\cdot\|}(f) , \rmLip_{\|\cdot\|}(f^{-1}) \} \leq 1 + \veps$;
\item[(2)] $\|f(x)-x\| \leq \veps$ for every $x \in X$;
\item[(3)] $f(x) = x$ whenever $\rmdist_{\|\cdot\|}(x,\rmspt T) \geq \veps$;
\item[(4)] $\calM_H(P - f_\# T) \leq \veps$.
\end{enumerate}
\end{Theorem}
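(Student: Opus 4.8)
The plan is to deform $T$ into an essentially flat chain in three movements: reduce $T$, up to a small Hausdorff mass error, to a finite sum of rectifiable chains of constant multiplicity carried by small $C^1$ pieces; flatten those pieces onto affine $m$-planes by a $C^1$ diffeomorphism close to the identity; then observe that the resulting chain is polyhedral up to a small error. For the first movement I would invoke the structure of rectifiable chains, in the spirit of \cite[Theorem 4.2]{DEP.13.approx}: the set $\rmset_m\|T\|$ being countably $(\calH^m_{\|\cdot\|},m)$ rectifiable, it is $\calH^m_{\|\cdot\|}$-almost entirely contained in a countable union of embedded $C^1$ $m$-submanifolds; combining this with a Lusin-type approximation of the $G$-orientation $\bg$ by a function with finitely many values, restricting to compact subsets, and discarding pieces of small mass, one produces $N\in\bbN$, group elements $g_1,\dots,g_N$, and pairwise disjoint compact sets $K_1,\dots,K_N$, each $K_i\subset\rmset_m\|T\|$ contained in an embedded $C^1$ $m$-submanifold $M_i$ and small enough that $M_i$ is, near $K_i$, an $\eta$-flat $C^1$ graph over its tangent plane $W_i\in\bG_m(X)$ (here $\eta>0$ is a threshold to be fixed), such that, with $T':=\sum_{i=1}^N g_i\lseg K_i\rseg$ carrying on each $K_i$ the orientation inherited from $T$ (which, after splitting, may be taken constant and equal to a fixed orientation of $M_i$), one has $\calM_H(T-T')\leq\veps/4$; note $\rmspt T'\subset\rmspt T$.

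For the second movement, flattening $M_i$ near $K_i$ onto $W_i$ is, locally, composition with $\Phi_i(x)=x-\gamma_i(\pi_{W_i}x)$, where $\gamma_i$ is the graph function of $M_i$ over $W_i$, of small $C^1$ norm; multiplying the correction $\Phi_i-\rmid$ by a $C^1$ bump $\chi_i$ equal to $1$ near $K_i$ and vanishing away from it produces a $C^1$ near-identity diffeomorphism $f_i$, the derivative bound being $\|Df_i-\rmid\|\lesssim\|D\Phi_i-\rmid\|+\|\Phi_i-\rmid\|_\infty\|D\chi_i\|\lesssim\eta$, valid provided $\chi_i$ transitions over a scale comparable to $\rmdiam K_i$. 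The genuine difficulty — and in my view the main obstacle — is to assemble these local flattenings into one globally defined $C^1$ diffeomorphism $f:X\to X$ whose derivative is everywhere within $\veps$ of the identity: the regions where distinct pieces of the same submanifold $M_i$ must be corrected overlap and cannot be separated, so this requires an inductive (or partition-of-unity) construction in which the $K_i$ are taken small, the corrections are performed successively, and the mass displaced in the transition zones is kept under control. Granting such an $f$, conditions (1), (2), (3) hold by construction, $\eta$ having been chosen small enough that $\rmLip_{\|\cdot\|}(f)$ and $\rmLip_{\|\cdot\|}(f^{-1})$ are at most $1+\veps$, and $f$ maps each $K_i$ into the affine plane $W_i$.

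For the third movement, $f_\# T'=\sum_{i=1}^N g_i\lseg f(K_i)\rseg$ is a rectifiable chain carried by the finite union of affine $m$-planes $W_i$, with constant multiplicity $g_i$ and constant orientation on $f(K_i)$. Since $\calH^m_{\|\cdot\|}\restr W_i$ is a constant multiple of Lebesgue measure on $W_i\cong\R^m$ and $f(K_i)$ is a set of finite $\calH^m_{\|\cdot\|}$ measure there, an elementary approximation (open superset, finite union of cubes, triangulation) yields a polyhedron $\Pi_i\subset W_i$ with $\calH^m_{\|\cdot\|}(f(K_i)\triangle\Pi_i)$ as small as wanted, hence $P:=\sum_{i=1}^N g_i\lseg\Pi_i\rseg\in\calP_m(X,G)$ with
\begin{equation*}
\calM_H(P-f_\# T)\leq\calM_H(P-f_\# T')+\big(\rmLip_{\|\cdot\|}f\big)^m\calM_H(T-T')\leq\tfrac{\veps}{4}+(1+\veps)^m\tfrac{\veps}{4}\,.
\end{equation*}
Running the whole argument with a suitable fixed fraction of $\veps$ in place of $\veps$ then gives (4). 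Thus everything reduces to the $C^1$ gluing of the second movement, which is where the care must be concentrated.
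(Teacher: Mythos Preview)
Your overall strategy is the same as the paper's --- cover by $C^1$ pieces, flatten, approximate in planes --- but the step you flag as ``the main obstacle'' is precisely the step the paper dispatches cleanly, and the mechanism you are missing is what makes the proof work.

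The gluing difficulty you describe (overlapping correction regions, inductive assembly, partition of unity) does not arise. Federer's flattening lemma \cite[3.1.23]{GMT} produces, for each good point $x$ and small radius $r$, a $C^1$ diffeomorphism $f_{x,r}$ of $X$ which equals $\rmid_X$ outside $\bB(x,r)$ and carries $M\cap\bB(x,tr)$ into the affine tangent plane, with $\rmLip f_{x,r}$ and $\rmLip f_{x,r}^{-1}$ close to $1$. The paper then applies the Besicovitch--Vitali covering theorem to the measure $\|T\|$ to extract a \emph{disjointed} finite family of such balls $\bB(x_k,r_k)$, $k=1,\ldots,\kappa$, capturing all but $\hat\veps$ of the mass. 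Since the balls are pairwise disjoint and each $f_k$ is the identity outside its own ball, the global map
\[
f(x)=\begin{cases} f_k(x) & \text{if } x\in\bB(x_k,r_k),\\ x & \text{otherwise},\end{cases}
\]
is automatically a $C^1$ diffeomorphism with the same Lipschitz bounds --- no gluing, no partition of unity, no induction. This is the point you are missing, and it removes the obstacle entirely.

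Two further simplifications relative to your outline. First, the Lusin-type reduction to constant multiplicity is unnecessary: once $f_{k\,\#}(T\hel M_{j(x_k)}\cap\bB(x_k,tr_k))$ is supported in an affine $m$-plane, it corresponds to a $G$-valued $L_1$ function there and is approximated in mass by a polyhedral chain directly, as in \cite[Lemma~3.2]{DEP.HAR.14}. Second, the density conditions (B) and (C) at the centers $x_k$ (small mass of $T$ off $M_{j(x_k)}$, small mass in the annulus $\bB(x_k,r_k)\setminus\bB(x_k,tr_k)$) control the residual error; this is where the ``small pieces'' bookkeeping actually happens, not in the construction of $f$.
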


There are three minor differences with \cite[4.2.20]{GMT} which require some comments: the coefficients group $G$ is not necessarily $\bZ$; we merely hypothesize that $T$ be rectifiable (thus $\partial T$ could have infinite mass and conclusion (4) does not involve the normal mass); and our conclusions involve the $\|\cdot\|$ metric in the ambient space $X$ (of course that would not seriously affect conclusions (2), (3) and (4) but there is something to say about conclusion (1)). 

\begin{proof}[Sketch of proof]
The proof uses \cite[3.1.23]{GMT} whose only metric aspects regard the Lipschitz constants of $f$ and $f^{-1}$, and the balls $\bU(b,r)$ and $\bU(b,tr)$. We fix a Euclidean structure $|\cdot|$ on $X$. A careful inspection of the proof of \cite[3.1.23]{GMT} reveals that it holds with Euclidean balls $\bU(b,r)$ and $\bU(b,tr)$ unchanged and Lipschitz constants $\rmLip_{\|\cdot\|}(f)$ and $\rmLip_{\|\cdot\|}(f^{-1})$ with respect to the ambient norm $\|\cdot\|$. Indeed bounding from above these Lipschitz constants involves only estimating the operator norm of $Df(x) - \rmid_X$; this requires an extra factor accounting for the fact $\|\cdot\|$ and the Euclidean norm are equivalent, which can be absorbed in the definition of $\veps$.
\par 
The remaining part of the proof mimics that of \cite[4.2.19]{GMT}. We let $\|T\| = \lno \bg \rno \calH^m_{\|\cdot\|} \hel A$, $A = \rmset_m \|T\|$. As $A$ is countably $(\calH^m_{|\cdot|},m)$ rectifiable there exist (at most) countably many $m$ dimensional $C^1$ submanifolds $M_j$ of $X$, $j \in J$, such that $\|T\|(\Rn \setminus \cup_{j \in J} M_j)=0$. We classically check that for $\|T\|$ almost every $x \in \Rn$ there exists $j(x) \in J$ such that $x \in M_{j(x)}$ and $\Theta^m(\|T\| \hel (\Rn \setminus M_{j(x)}),x)=0$. Given $\hat{\veps} > 0$, for all such $x$ there exists $r(x) > 0$ such that for each $0 < r < r(x)$
\begin{enumerate}
\item[(A)] the above version of \cite[3.1.23]{GMT} applies with $t=(1+\hat{\veps})^{-1}$ at scale $r$ and point $x$ to $M_{j(x)}$;
\item[(B)] $\calM_H\left(T \hel \bB(x,tr) - T \hel M_{j(x)} \cap \bB(x,tr)\right) \leq \hat{\veps} \calM_H\left(T \hel \bB(x,tr)\right)$;
\item[(C)] $\calM_H\left(T \hel \bB(x,tr) - T \hel \bB(x,r)\right) \leq 2 C (1-t^m) \calM_H\left(T \hel \bB(x,r)\right)$ (where $C > 0$ is such that $\calH^m_{\|\cdot\|} \leq C \calH^m_{|\cdot|}$).
\end{enumerate}  
According to the Besicovitch-Vitali covering Theorem there exists a disjointed family of balls $\bB(x_k,r_k)$, $k=1,2,\ldots$, whose centers are as before and $0 < r_k < r(x_k)$, and $\|T\|(\Rn \setminus \cup_{k=1}^\infty \bB(x_k,r_k)) =0$. Thus $\|T\|(\Rn \setminus \cup_{k=1}^\kappa \bB(x_k,r_k)) \leq \hat{\veps}$ for some $\kappa$. For each $k=1,\ldots,\kappa$ we associate with $t = 1- \hat{\veps}$, $M_{j(x_k)}$, $x_k$ and $r_k$ the $C^1$ diffeomorphism $f_k$ of $X$ according to (the above version of) Proposition \cite[3.1.23]{GMT}, and we infer from its last conclusion that $f_{k\,\#}(T \hel M_{j(x_k)} \cap \bB(x_k,tr_k))$ is an $m$ dimensional $G$ chain supported in an affine $m$ dimensional subspace of $X$ and thus corresponds to a $G$ valued $L_1$ function, therefore $\calM_H(f_{k\,\#}(T \hel M_{j(x_k)} \cap \bB(x_k,tr_k)) - P_k) \leq \hat{\veps}\kappa^{-1}$ for some $P_k \in \calP_m(X,G)$ as in \cite[Lemma 3.2]{DEP.HAR.14}. Letting $f : X \to X$ coincide with $f_k$ in the ball $\bB(x_k,r_k)$, $k=1,\ldots,\kappa$, and with $\rmid_X$ otherwise, one readily checks that conclusions (1), (2) and (3) hold. Finally,
\begin{equation*}
\begin{split}
\calM_H(P - f_\#T) & \leq \sum_{k=1}^\kappa \calM_H \left( P_k - f_{k\,\#} (T \hel M_{j(x_k)} \cap \bB(x_k,r_k)) \right) \\
&\quad\quad + \sum_{k=1}^\kappa \calM_H \left( T \hel \bB(x_k,tr_k) - T \hel M_{j(x_k)} \cap \bB(x_k,tr_k)\right) \\
&\quad\quad + \calM_H \left( f_\# T \hel \left( \Rn \setminus \cup_{k=1}^\kappa \bB(x_k,r_k) \right)\right) \\
& \leq \hat{\veps} + \hat{\veps} (1+\hat{\veps})^m \calM_H(T) + (1+\hat{\veps})^m \hat{\veps}\,.
\end{split}
\end{equation*}
\end{proof}

The following is inspired by the proof of \cite[5.1.5]{GMT}.

\begin{Theorem}
\label{theorem.2}
The following are equivalent.
\begin{enumerate}
\item[(1)] The triple $\big((X,\|\cdot\|),(G,\lno \cdot \rno),m\big)$ satisfies the triangle inequality for cycles.
\item[(2)] The Hausdorff mass $\calM_H : \calR_m(X;G) \to \R_+$ is lower semicontinuous with respect to $\calF$ convergence.
\end{enumerate}
\end{Theorem}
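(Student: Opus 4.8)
The implication $(2)\Rightarrow(1)$ is immediate: since $\calP_m(X,G)\subset\calR_m(X,G)$ and the flat norm restricts, the lower semicontinuity of $\calM_H$ on $\calR_m(X,G)$ entails its lower semicontinuity on $\calP_m(X,G)$, which by \ref{theorem.1} is assertion $(1)$. The content is $(1)\Rightarrow(2)$, and the plan is to reduce it to the polyhedral case \ref{theorem.1} by passing to the relaxed functional
\[
\overline{\calM}_H(S)=\inf\Big\{\liminf_j\calM_H(R_j):R_j\in\calR_m(X,G)\text{ and }\calF(R_j-S)\to0\Big\},
\]
defined for $S\in\calF_m(X,G)$, with the convention $\inf\emptyset=+\infty$. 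Standard relaxation arguments give that $\overline{\calM}_H$ is lower semicontinuous on the metric space $(\calF_m(X,G),\calF)$, that $\overline{\calM}_H\leq\calM_H$ on $\calR_m(X,G)$, that $\overline{\calM}_H$ is subadditive, and that it is monotone under Lipschitz push-forward, $\overline{\calM}_H(f_\#S)\leq(\rmLip_{\|\cdot\|}f)^m\,\overline{\calM}_H(S)$. It then suffices to show that $\calM_H(T)\leq\overline{\calM}_H(T)$ for every $T\in\calR_m(X,G)$, since then $\calM_H=\overline{\calM}_H$ on $\calR_m(X,G)$ and $(2)$ follows from the lower semicontinuity of $\overline{\calM}_H$.

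First I would upgrade the Approximation Theorem \ref{approximation} to a polyhedral approximation in the flat norm with controlled mass: for $R\in\calR_m(X,G)$ and $\delta>0$ there is $P\in\calP_m(X,G)$ with $\calF(R-P)\leq\delta$ and $\calM_H(P)\leq\calM_H(R)+\delta$. Conclusion (4) of \ref{approximation} only gives $\calM_H(P-f_\#R)\leq\veps$, so the extra point is to compare $R$ with $f_\#R$ in the flat norm. Choosing a Lipschitz $G$ chain $L$ with $\calM_H(R-L)$ small --- whose boundary has finite mass, being a finite sum of push-forwards of polyhedral $(m-1)$ chains --- the homotopy formula applied to the affine homotopy $(t,x)\mapsto(1-t)x+tf(x)$ between $\rmid_X$ and $f$ yields $\calF(L-f_\#L)\leq C\veps\,(\calM_H(L)+\calM_H(\partial L))$ with $C$ depending only on $m$; hence $\calF(R-f_\#R)$ is at most a constant times $\calM_H(R-L)$ plus $C\veps(\calM_H(L)+\calM_H(\partial L))$, which is $\leq\delta/2$ after fixing $L$ and then taking $\veps$ small. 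Together with conclusions (1) and (4) of \ref{approximation} --- which give $\calM_H(P)\leq\calM_H(f_\#R)+\veps\leq(1+\veps)^m\calM_H(R)+\veps$ --- this proves the assertion; alternatively, \cite[Theorem~4.2]{DEP.13.approx} may be cited directly for it.

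Granting $(1)$, hence by \ref{theorem.1} the lower semicontinuity of $\calM_H$ on $\calP_m(X,G)$, I would next check that $\overline{\calM}_H=\calM_H$ on $\calP_m(X,G)$: if $P\in\calP_m(X,G)$ and $R_j\in\calR_m(X,G)$ satisfy $\calF(R_j-P)\to0$, replacing each $R_j$ by a polyhedral $\tilde P_j$ with $\calF(R_j-\tilde P_j)\leq2^{-j}$ and $\calM_H(\tilde P_j)\leq\calM_H(R_j)+2^{-j}$ gives $\tilde P_j\to P$ in the flat norm, so $\calM_H(P)\leq\liminf_j\calM_H(\tilde P_j)\leq\liminf_j\calM_H(R_j)$; taking the infimum over such sequences yields $\calM_H(P)\leq\overline{\calM}_H(P)$, hence equality. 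Finally, for arbitrary $T\in\calR_m(X,G)$ and $\tau>0$, I would apply \ref{approximation} to obtain $P\in\calP_m(X,G)$ and a $C^1$ diffeomorphism $f$ with $\max\{\rmLip_{\|\cdot\|}(f),\rmLip_{\|\cdot\|}(f^{-1})\}\leq1+\tau$ and $\calM_H(P-f_\#T)\leq\tau$, and run the chain of estimates
\begin{align*}
\calM_H(T)
&\leq(1+\tau)^m\calM_H(f_\#T)
\leq(1+\tau)^m\big(\calM_H(P)+\tau\big)
=(1+\tau)^m\big(\overline{\calM}_H(P)+\tau\big)\\
&\leq(1+\tau)^m\big(\overline{\calM}_H(f_\#T)+\calM_H(P-f_\#T)+\tau\big)
\leq(1+\tau)^m\big(\overline{\calM}_H(f_\#T)+2\tau\big)\\
&\leq(1+\tau)^m\big((1+\tau)^m\,\overline{\calM}_H(T)+2\tau\big),
\end{align*}
where the first inequality is push-forward by $f^{-1}$, the third uses subadditivity of $\overline{\calM}_H$ together with $\overline{\calM}_H\leq\calM_H$ on the rectifiable chain $P-f_\#T$, and the last is push-forward monotonicity. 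Letting $\tau\to0$ gives $\calM_H(T)\leq\overline{\calM}_H(T)$, as wanted.

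The step I expect to be the main obstacle is the passage from $f_\#T$ back to $T$ in the flat norm within the first step: \ref{approximation} controls only the Hausdorff-mass distance $\calM_H(P-f_\#T)$, and turning this into the flat distance $\calF(R-f_\#R)$ forces one to introduce a Lipschitz approximant whose boundary has finite mass and to invoke the homotopy estimate. Once this polyhedral flat approximation with mass control is in place, the rest is routine manipulation of the relaxation $\overline{\calM}_H$ together with an appeal to \ref{theorem.1}.
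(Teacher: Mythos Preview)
Your proof is correct and follows essentially the same route as the paper's. The paper works directly with a convergent sequence $T_j\to T$: it first treats the case $T\in\calP_m(X,G)$ by approximating each $T_j$ by a polyhedral chain $P_j$ close in flat norm with controlled mass (citing \cite[Theorem~4.2]{DEP.13.approx}) and applying \ref{theorem.1}; then for general $T$ it invokes \ref{approximation} to obtain $P$ and the diffeomorphism $f$, pushes the sequence forward by $f$, adds the error $E=P-f_\#T$, applies the polyhedral case to $P$, and pulls back via $f^{-1}$. Your argument repackages exactly these steps in the language of the relaxed functional $\overline{\calM}_H$: your Step~2 is the paper's polyhedral case, your Step~3 chain of inequalities is the paper's general case with $\overline{\calM}_H$ playing the role of $\liminf_j\calM_H(T_j)$, and your Step~1 is precisely \cite[Theorem~4.2]{DEP.13.approx}, which you rightly note can be cited directly rather than rederived from \ref{approximation} via homotopy.
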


\begin{proof}
That $(2) \Rightarrow (1)$ follows from Theorem \ref{theorem.1}. We now prove the reciprocal proposition holds. Let $T,T_1,T_2,\ldots$ be members of $\calR_m(X,G)$ such that $\calF(T-T_j) \to 0$ as $j \to \infty$. We first establish the conclusion in the particular case when $T \in \calP_m(X,G)$ is polyhedral. According to \cite[Theorem 4.2]{DEP.13.approx} there are $P_j \in \calP_m(X,G)$ such that $\calF(T_j-P_j) \leq j^{-1}$ and $\calM_H(P_j) \leq j^{-1} + \calM_H(T_j)$, $j=1,2,\ldots$. Therefore $\calF(T-P_j) \to 0$ as $j \to \infty$ and it follows from Theorem \ref{theorem.1} that
\begin{equation*}
\calM_H(T) \leq \liminf_j \calM_H(P_j) \leq \liminf_j \calM_H(T_j) \,.
\end{equation*}
Turning to the general case we associate with $T$ and $\veps > 0$ a polyhedral chain $P$ and a $C^1$ diffeomorphism $f$ as in Theorem \ref{approximation}. Letting $E \in \calR_m(X,G)$ be such that $P = f_\#T + E$ we see that $\calM_H(E) \leq \veps$. Observing that $\calF(f_\# T - f_\# T_j) \to 0$ as $j \to \infty)$ we infer that also $\calF(P - (E + f_\#T_j)) \to 0$ as $j \to \infty$ and thus 
\begin{equation*}
\calM_H(P) \leq \liminf_j \calM_H(E + f_\#T_j) \leq \veps + (1+\veps)^m \liminf_j \calM_H(T_j) 
\end{equation*}
according to the particular case treated first. Now since $T = (f^{-1})_\#(P-E)$ we conclude that
\begin{equation*}
\begin{split}
\calM_H(T) & \leq (1+\veps)^m \calM_H(P - E) \\
& \leq (1+\veps)^m \left( 2\veps +  (1+\veps)^m \liminf_j \calM_H(T_j) \right) \,.
\end{split}
\end{equation*}
As $\veps > 0$ is arbitrary the proof is complete.
\end{proof}

\section{Density contractors}

\begin{Empty}[Spaces of linear homomorphisms]
\label{s.l.h}
We let $\rmHom(X,X)$ denote the linear space of linear homomorphisms $X \to X$. It is equipped with its usual norm $\vvvert \pi \vvvert = \max \{ |\pi(x)| : x \in X \text{ and } |x| \leq 1 \}$ corresponding to the Euclidean norm $|\cdot|$ of $X$. 

Given $m \in \{1,\ldots,\dim X -1\}$ we define the metric subspace $\rmHom_m(X,X) = \rmHom(X,X) \cap \{ \pi : \rmrank \pi \leq m \}$ which is closed, as well as its own subspace $\rmHom_{m,\rminv}(X,X) = \rmHom_m(X,X) \cap \{ \pi : \rmrank \pi = m \}$ which is relatively open. We further define the map
\begin{equation*}
\rmHom_{m,\rminv}(X,X) \to \bG_m(X) : \pi \mapsto W_\pi
\end{equation*}
so that $\rmim \pi = W_\pi$ and we claim it is continuous. Indeed if $\pi,\pi_1,\pi_2,\ldots$ belong to $\rmHom_{m,\rminv}(X,X)$, $\pi_k \to \pi$ and $\rmim \pi_k = W_k$ then we choose orthonormal bases $e_1^k,\ldots,e^k_n$ of $X$ such that $\rmspan \{e^k_1,\ldots,e^k_m\} = W_k$. Each subsequence of these bases admits a subsequence (still denote the same way) such that $e^k_j \to e_j$ as $k \to \infty$, $j=1,\ldots,n$, where $e_1,\ldots,e_n$ is some orthonormal basis of $X$. Given $x \in X$ write $\pi_k(x) = \sum_{j=1}^m t^k_j e^k_j$ with $\sum_{j=1}^m (t^k_j)^2 \leq |x|^2 \Gamma^2$ where $\Gamma = \sup_k \vvvert \pi_k \vvvert < \infty$. If $i=m+1,\ldots,n$ then $\la e_i , \pi(x) \ra = \lim_k \la x , \pi_k(x) \ra = \lim_k \sum_{j=1}^m t^k_j \la e_i , e^k_j \ra = 0$. Therefore $\rmim \pi \subset \rmspan \{e_{m+1},\ldots,e_n\}^\perp = W$ where $W = \rmspan \{e_1,\ldots,e_m\}$. Since $\rmrank \pi = m$ we infer $\rmim \pi = W$. As $d(W,W_k) \to 0$ according to \ref{grass}, the asserted continuity follows. 

Finally, corresponding to $W \in \bG_m(X)$ we define $\rmHom(X,W) = \rmHom(X,X) \cap \{ \pi : \rmim \pi \subset W \}$.
\end{Empty}

\begin{Proposition}
\label{41}
Let $A \subset X$ be such that $\calH^m_{\|\cdot\|}(A) < \infty$ and define $f_A : \rmHom_m(X,X) \to \R$ by the formula $f_A(\pi) = \calH^m_{\|\cdot\|}(\pi(A))$. It follows that
\begin{enumerate}
\item[(1)] If $A$ is compact then $f_A$ is upper semicontinuous;
\item[(2)] If $A$ is compact and convex then $f_A$ is continuous;
\item[(3)] If $A$ is Borel then $f_A$ is Borel.
\end{enumerate}
\end{Proposition}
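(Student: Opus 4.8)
The plan is to prove (1) first, deduce (3) from it, and obtain (2) by refining the argument for (1). Two preliminary remarks organize everything. Since every $\pi$ is Lipschitzian, $\calH^m_{\|\cdot\|}(\pi(A))\leq(\rmLip_{\|\cdot\|}\pi)^m\calH^m_{\|\cdot\|}(A)<\infty$, so $f_A$ is everywhere finite; and if $\rmrank\pi<m$ then $\pi(A)$ lies in a subspace of dimension $<m$, whence $f_A(\pi)=0$. When analyzing a convergent sequence $\pi_k\to\pi$ in $\rmHom_m(X,X)$ I would therefore distinguish the cases $\rmrank\pi<m$ and $\rmrank\pi=m$.

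For the upper semicontinuity (1) in the \emph{degenerate} case $\rmrank\pi<m$ I would prove directly that $\calH^m_{\|\cdot\|}(\pi_k(A))\to0$. Putting $\rho=\sup\{|x|:x\in A\}$ we have $\pi_k(A)\subseteq\pi_k\big(\bB_{|\cdot|}(0,\rho)\big)$, an ellipsoid contained in a subspace of dimension $\leq m$. Fixing $C<\infty$ with $\calH^m_{\|\cdot\|}\leq C\,\calH^m_{|\cdot|}$ (the norms being equivalent) and recalling from \eqref{eq.busemann.1} that the $\calH^m_{|\cdot|}$ measure of a Euclidean $m$-ball of radius $\rho$ equals $\balpha(m)\rho^m$, one gets
\begin{equation*}
\calH^m_{\|\cdot\|}(\pi_k(A))\ \leq\ C\,\calH^m_{|\cdot|}\big(\pi_k(\bB_{|\cdot|}(0,\rho))\big)\ =\ C\,\balpha(m)\,\rho^m\prod_{i=1}^m\sigma_i(\pi_k),
\end{equation*}
where $\sigma_1(\pi_k)\geq\cdots\geq\sigma_m(\pi_k)\geq0$ are the $m$ largest singular values of $\pi_k$ relative to $|\cdot|$. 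Singular values depend continuously on the homomorphism, and $\sigma_m(\pi)=0$ since $\rmrank\pi<m$, so the right-hand side tends to $0=f_A(\pi)$.

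In the case $\rmrank\pi=m$ I would transport the computation to $\ell_2^m$. Here $\rmHom_{m,\rminv}(X,X)$ is relatively open in $\rmHom_m(X,X)$, so $\rmrank\pi_k=m$ for large $k$ and $W_k:=W_{\pi_k}\to W:=W_\pi$ in $\bG_m(X)$ by \ref{s.l.h}. Since only a $\limsup$ is at stake I may pass to a subsequence and choose linear isometries $\iota_k:\ell_2^m\to(W_k,|\cdot|)$ converging pointwise to an isometry $\iota:\ell_2^m\to(W,|\cdot|)$, as in the proof of \ref{25}. Because $\rmim\pi_k=W_k=\rmim\iota_k$, the linear maps $g_k:=\iota_k^{-1}\circ\pi_k:X\to\ell_2^m$ are well defined, and a routine argument (using that $\iota_k\to\iota$ uniformly on bounded sets) shows $g_k\to g:=\iota^{-1}\circ\pi$ pointwise, hence uniformly on the compact set $A$; consequently $\rmdist_{\calH}\big(\iota_k^{-1}(\pi_k(A)),\iota^{-1}(\pi(A))\big)\to0$ by \ref{h.d}, all these sets lying in a common ball of $\ell_2^m$ (as $\sup_k\vvvert\pi_k\vvvert<\infty$). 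By the defining identity of the Busemann--Hausdorff density and since $\iota_k$ is a Euclidean isometry onto $W_k$, we have $\calH^m_{\|\cdot\|}(\pi_k(A))=\psi(W_k)\,\calL^m\big(\iota_k^{-1}(\pi_k(A))\big)$ and likewise for $\pi$; now $\psi(W_k)\to\psi(W)$ by \ref{25}, and $\calL^m$ is upper semicontinuous along $\rmdist_{\calH}$-convergent sequences of uniformly bounded compact subsets of $\ell_2^m$ (because $\calL^m(\bB(K,\veps))\downarrow\calL^m(K)$ as $\veps\downarrow0$), so $\limsup_k f_A(\pi_k)\leq f_A(\pi)$. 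This completes (1). For (2), when $A$ is also convex the sets $\iota_k^{-1}(\pi_k(A))$ are convex bodies, possibly lower dimensional, so I may replace upper semicontinuity of $\calL^m$ by its classical continuity on the space of convex bodies for the Hausdorff distance, getting $\calL^m\big(\iota_k^{-1}(\pi_k(A))\big)\to\calL^m\big(\iota^{-1}(\pi(A))\big)$ and thus $f_A(\pi_k)\to f_A(\pi)$ when $\rmrank\pi=m$; when $\rmrank\pi<m$, $f_A(\pi)=0\leq\liminf_kf_A(\pi_k)$ trivially, so together with (1) this yields full continuity of $f_A$.

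Finally I would derive (3) from (1). Being a finite Borel measure, $\calH^m_{\|\cdot\|}\hel A$ is inner regular, so there are compact sets $K_1\subseteq K_2\subseteq\cdots\subseteq A$ with $\calH^m_{\|\cdot\|}\big(A\setminus\bigcup_jK_j\big)=0$; the Lipschitz bound gives $\calH^m_{\|\cdot\|}\big(\pi(A\setminus\bigcup_jK_j)\big)=0$ for every $\pi$, so $f_A(\pi)=\calH^m_{\|\cdot\|}\big(\bigcup_j\pi(K_j)\big)=\sup_jf_{K_j}(\pi)$ by continuity from below of the outer measure along the increasing sequence of compact (hence $\calH^m_{\|\cdot\|}$-measurable) sets $\pi(K_j)$; since each $f_{K_j}$ is Borel by (1), so is the countable supremum $f_A$. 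I expect the main obstacle to be the degenerate case $\rmrank\pi<m$ of (1): there $\pi_k(A)\to\pi(A)$ in Hausdorff distance with $\pi(A)$ being $\calH^m_{\|\cdot\|}$-null, yet the sets $\pi_k(A)$ may be genuinely $m$ dimensional, so Hausdorff convergence alone carries no information — one must exploit the linearity of the $\pi_k$, which is exactly what the singular value bound does. A secondary complication is that when $\rmrank\pi=m$ the image $\pi_k(A)$ lives in the moving subspace $W_{\pi_k}$ carrying a varying Busemann--Hausdorff density, which is why the continuity of $\psi$ (Proposition \ref{25}) and of $\pi\mapsto W_\pi$ (\ref{s.l.h}) are indispensable.
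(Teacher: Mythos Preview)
Your proposal is correct, and your treatment of (3) is essentially the same as the paper's. For (1) and (2), however, your route differs noticeably from the paper's.

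For (1), the paper gives a single unified argument that avoids the rank dichotomy entirely. The key observation (from \ref{haar}) is that for any $\pi\in\rmHom_m(X,X)$ one has $\calH^m_{\|\cdot\|,\infty}(\pi(A))=\calH^m_{\|\cdot\|}(\pi(A))$, because $\pi(A)$ sits in an $m$-dimensional subspace. This lets the paper work directly with the size-$\infty$ premeasure: pick an open cover of $\pi(A)$ that nearly realizes $\calH^m_{\|\cdot\|,\infty}(\pi(A))$, observe that $\pi_k(A)\to\pi(A)$ in Hausdorff distance so eventually $\pi_k(A)$ is covered too, and conclude. No singular values, no transport to $\ell_2^m$, no case split --- the degenerate case $\rmrank\pi<m$ is absorbed automatically. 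Your approach is correct but more hands-on; the paper's is shorter and more conceptual, at the price of invoking the (nontrivial) fact from \ref{haar}.

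For (2), the paper again avoids transporting to a fixed $\ell_2^m$. It proves lower semicontinuity (upper semicontinuity being (1)) by first treating the Euclidean case: fix any $W\in\bG_m(X)$ with $\rmim\pi\subset W$, note that $\pi_W(\pi_k(A))\to\pi_W(\pi(A))=\pi(A)$ in Hausdorff distance inside $W$, use continuity of Haar measure on compact convex sets to get $\calH^m_{|\cdot|}(\pi_W(\pi_k(A)))\to\calH^m_{|\cdot|}(\pi(A))$, and then use $\vvvert\pi_W\vvvert=1$ to drop the outer $\pi_W$ at the cost of an inequality. The passage to general $\|\cdot\|$ then goes through $\psi$ exactly as you do. Your setup from (1) lets you get actual convergence (not just $\liminf$) in the non-degenerate case, which is a small bonus; the paper's projection trick is slightly slicker in that it does not require selecting convergent isometries $\iota_k$ along subsequences.

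One minor point worth tightening in your write-up: when you form $g_k=\iota_k^{-1}\circ\pi_k$, the inverse $\iota_k^{-1}$ is only defined on $W_k$, so the convergence $g_k\to g$ should be argued via the extensions $\iota_k^{-1}\circ\pi_{W_k}:X\to\ell_2^m$, which do converge in operator norm since $\iota_k(e_j)\to\iota(e_j)$. Your phrase ``a routine argument'' covers this, but it is the one place a reader might pause.
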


\begin{proof}
(1) We start with the following remark. Given $\pi \in \rmHom_m(X,X)$ let $W \in \bG_m(X)$ be such that $\rmim \pi \subset W$. Observe that in the definition of $\calH^m_{\|\cdot\|,\delta}(\pi(A))$, $0 < \delta \leq \infty$, one can restrict to covers of $\pi(A)$ by subsets of $W$. By means of a linear homomorphism $W \to \Rm$ one transforms this number to the $\calH^m_{\nu,\delta}$ measure of a subset of $\Rm$ with respect to some norm $\nu$ in $\Rm$, therefore $\calH^m_{\|\cdot\|,\delta}(\pi(A)) = \calH^m_{\|\cdot\|}(\pi(A))$ according to \ref{haar}. We let $\delta = \infty$. Consider a sequence $\pi_1,\pi_2,\ldots$ in $\rmHom_m(X,X)$ converging to $\pi$. Letting $\veps > 0$ we choose a cover $(E_i)_{i \in I}$ of $\pi(A)$ such that $\sum_{i \in I} \balpha(m) 2^{-m} (\rmdiam_{\|\cdot\|} E_i)^m < \veps + \calH^m_{\|\cdot\|,\infty}(\pi(A))$. Notice there is no restriction to assume the $E_i$ are open in $X$. Define $E = \cup_{i \in I} E_i$ and observe the compactness of $\pi(A)$ implies there exists $r > 0$ such that $\bU(\pi(A),r) \subset E$. Now if $k$ is sufficiently large then $\pi_k(A) \subset \bU(\pi(A),r) \subset \cup_{i \in I} E_i$ -- because $\pi_k(A) \to \pi(A)$ in Hausdorff distance according to \ref{h.d}(1) -- therefore $\calH^m_{\|\cdot\|,\infty}(\pi_k(A)) \leq \sum_{i \in I} \balpha(m) 2^{-m} (\rmdiam_{\|\cdot\|} E_i)^m < \veps + \calH^m_{\|\cdot\|,\infty}(\pi(A))$. By our initial remark it follows that $\limsup_k \calH^m_{\|\cdot\|}(\pi_k(A)) \leq \veps + \calH^m_{\|\cdot\|}(\pi(A))$. Since $\veps > 0$ is arbitrary the proof of (1) is complete.
\par 
(3) Choose a nondecreasing sequence of compact subsets of $A$, say $A_1,A_2,\ldots$ such that $\calH^m_{\|\cdot\|}(A \setminus A_j) \to 0$. Given $\pi \in \rmHom_m(X,X)$ we notice that
\begin{multline*}
0 \leq \calH^m_{\|\cdot\|}(\pi(A)) - \calH^m_{\|\cdot\|}(\pi(A_j)) = \calH^m_{\|\cdot\|}(\pi(A) \setminus \pi(A_j)) \leq \calH^m_{\|\cdot\|}(\pi(A \setminus A_j)) \\
\leq (\rmLip_{\|\cdot\|} \pi)^m \calH^m_{\|\cdot\|}(A \setminus A_j) \to 0 \,.
\end{multline*}
In other words $f_{A_j} \to f_A$ pointwise. Since each $f_{A_j}$ is upper semicontinuous according to (1) the conclusion follows.
\par 
(2) Since $f_A$ is upper semicontinuous according to (1) it remains to establish it is lower semicontinuous as well. We start with the particular case when the norm $\|\cdot\| = |\cdot|$ is Euclidean. Let $\pi,\pi_1,\pi_2,\ldots$ be members of $\rmHom_m(X,X)$ such that $\pi_k \to \pi$. Choose $W \in \bG_m(X)$ with $\rmim \pi \subset W$ and let $\pi_W$ denote the orthogonal projection onto $W$. Since $\pi_k(A) \to \pi(A)$ in Hausdorff distance, \ref{h.d}(1), $\pi_W(\pi_k(A)) \to \pi_W(\pi(A))$ as well, \ref{h.d}(2). Furthermore $\pi_W(\pi(A)) = \pi(A)$. Recall that any Haar measure on $W$, restricted to the collection of compact convex subsets of $W$, is continuous with respect to Hausdorff distance, \cite[3.2.36]{GMT}. Therefore $\lim_k \calH^m_{|\cdot|}(\pi_W(\pi_k(A)) = \calH^m_{|\cdot|}(\pi(A))$. Since we are in the Euclidean setting, $\calH^m_{|\cdot|}(\pi_W(\pi_k(A)) \leq \vvvert \pi_W \vvvert^m \calH^m_{|\cdot|}(\pi_k(A)) = \calH^m_{|\cdot|}(\pi_k(A))$, and finally $\calH^m_{|\cdot|}(\pi(A)) \leq \liminf_k \calH^m_{|\cdot|}(\pi_k(A))$ which completes the proof in case $\|\cdot\| = |\cdot|$. 
\par 
We now turn to the general case. If $\rmrank \pi < m$ then clearly $\calH^m_{\|\cdot\|}(\pi(A)) = 0 \leq \liminf_k \calH^m_{\|\cdot\|}(\pi_k(A))$. We henceforth assume that $\rmrank \pi = m$ and thus also $\rmrank \pi_k = m$ if $k$ is sufficiently large. Recalling that $\calH^m_{\|\cdot\|}(\pi(A)) = \beta(\|\cdot\|,|\cdot|,W_\pi) \calH^m_{|\cdot|}(\pi(A)) = \psi(W_\pi) \calH^m_{|\cdot|}(\pi(A))$ the lower semicontinuity in the variable $\pi$ follows from the particular case and from the continuity of $\pi \mapsto W_\pi$, \ref{s.l.h}.
\end{proof}

\begin{Definition}
A {\em density contractor on $W \in \bG_m(X)$} is a Borel probability measure $\mu$ on $\rmHom_m(X,X)$ such that 
\begin{enumerate}
\item[(1)] $\mu$ is supported in $\rmHom(X,W)$, i.e. $\mu(\rmHom_m(X,X) \setminus \rmHom(X,W)) = 0$;
\item[(2)] If $V \in \bG_m(X)$ and $A \subset V$ is Borel then 
\begin{equation*}
\int_{\rmHom_m(X,X)} \calH^m_{\|\cdot\|}(\pi(A)) d\mu(\pi) \leq \calH^m_{\|\cdot\|}(A) \,,
\end{equation*}
with equality when $V=W$.
\end{enumerate}
When there will be several norms under consideration on $X$, in order to avoid confusion we will insist that $\mu$ is a density contractor on $W$ {\em with respect to $\calH^m_{\|\cdot\|}$}.
\end{Definition}

In view of the preceding section, density contractors are useful for the following reason.

\begin{Theorem}
\label{tic.d.c}
Assume that $(X,\|\cdot\|)$ and $m \in \{1,\ldots,\dim X -1 \}$ have the following property: Every $W \in \bG_m(X)$ admits a density contractor with respect to $\calH^m_{\|\cdot\|}$. It follows that for every complete normed Abelian group $(G,\lno\cdot\rno)$ the triple $\big((X,\|\cdot\|),(G,\lno\cdot\rno),m)$ satisfies the triangle inequality for cycles.
\end{Theorem}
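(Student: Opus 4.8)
The plan is to verify the triangle inequality for cycles directly. Recall this asks that whenever $\sigma_1,\dots,\sigma_\kappa$ are simplexes in $X$ and $g_1,\dots,g_\kappa\in G$ satisfy $\partial\bigl(g_1\lseg\sigma_1\rseg-\sum_{k=2}^{\kappa}g_k\lseg\sigma_k\rseg\bigr)=0$, one has $\lno g_1\rno\,\calH^m_{\|\cdot\|}(\sigma_1)\le\sum_{k=2}^{\kappa}\lno g_k\rno\,\calH^m_{\|\cdot\|}(\sigma_k)$ --- the Hausdorff mass of a single simplex chain being exactly $\lno g\rno\,\calH^m_{\|\cdot\|}(\sigma)$. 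There is nothing to prove when $\sigma_1$ has affine dimension $<m$, so I would assume $\sigma_1$ is non-degenerate, let $W\in\bG_m(X)$ be the linear subspace parallel to its affine hull, and fix a density contractor $\mu$ on $W$ with respect to $\calH^m_{\|\cdot\|}$, available by hypothesis. Abbreviate $P=g_1\lseg\sigma_1\rseg$ and $Q=\sum_{k=2}^{\kappa}g_k\lseg\sigma_k\rseg$, so that $P-Q$ is a polyhedral $m$-cycle with compact support.

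The first step is a constancy argument after projecting. Since $\mu$ is carried by $\rmHom(X,W)$, for $\mu$-almost every $\pi$ we have $\rmim\pi\subseteq W$; and $\pi$ being linear, hence Lipschitz, $\pi_\#$ is defined on polyhedral chains, commutes with $\partial$, and takes simplexes to (possibly degenerate) simplexes. Hence $\pi_\#(P-Q)$ is a polyhedral $m$-chain, a cycle, with compact support contained in the $m$-dimensional subspace $W$. Over a common simplicial refinement in $W$ such a chain is given by a piecewise-constant $G$-valued density; the cycle condition forces this density to have no jumps, hence to be locally constant, and compact support then forces it to vanish identically. Thus $\pi_\# P=\pi_\# Q$ for $\mu$-a.e.\ $\pi$.

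The second step records the relevant masses and integrates. Whether or not $\pi$ is injective on the affine hull of $\sigma_1$, one has $\calM_H(\pi_\# P)=\lno g_1\rno\,\calH^m_{\|\cdot\|}(\pi(\sigma_1))$ --- both sides vanish if $\pi$ collapses $\sigma_1$, and otherwise $\pi_\# P$ is $\pm g_1$ times the simplex $\pi(\sigma_1)$, with $\lno -g_1\rno=\lno g_1\rno$ --- while subadditivity of the Hausdorff mass gives $\calM_H(\pi_\# Q)\le\sum_{k=2}^{\kappa}\lno g_k\rno\,\calH^m_{\|\cdot\|}(\pi(\sigma_k))$. In view of $\pi_\# P=\pi_\# Q$ this yields, for $\mu$-a.e.\ $\pi$,
\[
\lno g_1\rno\,\calH^m_{\|\cdot\|}(\pi(\sigma_1))\le\sum_{k=2}^{\kappa}\lno g_k\rno\,\calH^m_{\|\cdot\|}(\pi(\sigma_k))\,.
\]
Integrating against $\mu$ over $\rmHom_m(X,X)$ --- legitimate because each $\pi\mapsto\calH^m_{\|\cdot\|}(\pi(\sigma_k))$ is Borel by \ref{41} --- and using the defining property of the density contractor then finishes the argument. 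Indeed, after a translation carrying $\sigma_1$ into $W$ (which alters neither $\calH^m_{\|\cdot\|}(\pi(\sigma_1))$ nor $\calH^m_{\|\cdot\|}(\sigma_1)$, since $\pi(\sigma_1)$ just gets translated and $\calH^m_{\|\cdot\|}$ is translation invariant), the equality clause with $V=W$ gives $\int\calH^m_{\|\cdot\|}(\pi(\sigma_1))\,d\mu(\pi)=\calH^m_{\|\cdot\|}(\sigma_1)$; likewise, for each $k\ge2$ with $\sigma_k$ non-degenerate, the inequality clause applied in the subspace parallel to $\sigma_k$ (again after a translation) gives $\int\calH^m_{\|\cdot\|}(\pi(\sigma_k))\,d\mu(\pi)\le\calH^m_{\|\cdot\|}(\sigma_k)$, and a degenerate $\sigma_k$ contributes $0$ to both sides. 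Assembling these estimates produces precisely $\lno g_1\rno\,\calH^m_{\|\cdot\|}(\sigma_1)\le\sum_{k=2}^{\kappa}\lno g_k\rno\,\calH^m_{\|\cdot\|}(\sigma_k)$.

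The step I expect to be the main obstacle is the constancy assertion: one must be certain that a compactly supported polyhedral $m$-cycle lying in an $m$-dimensional subspace is zero, for an arbitrary complete normed abelian group $G$. Restricting attention to polyhedral --- rather than general flat --- chains is exactly what keeps this elementary, through the piecewise-constant density picture; the corresponding statement for flat chains would demand more. A subsidiary, purely bookkeeping matter is the passage between affine simplexes and the linear subspaces in which a density contractor's defining property is stated, which is taken care of by translation invariance of $\calH^m_{\|\cdot\|}$.
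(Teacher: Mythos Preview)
Your proof is correct and follows essentially the same route as the paper's: translate so that $\sigma_1$ lies in the linear $m$-plane $W$, apply the constancy theorem to the pushed-forward polyhedral cycle $\pi_\#(P-Q)$ in $W$ (the paper cites \cite[Theorem~6.2]{DEP.HAR.14} rather than sketching the piecewise-constant argument), deduce the pointwise inequality $\lno g_1\rno\,\calH^m_{\|\cdot\|}(\pi(\sigma_1))\le\sum_{k\ge2}\lno g_k\rno\,\calH^m_{\|\cdot\|}(\pi(\sigma_k))$, and integrate against the density contractor $\mu$ using the equality clause on $W$ and the inequality clause elsewhere. The only cosmetic difference is that the paper performs the translation once at the outset on the whole chain, whereas you invoke translation invariance separately when applying the density-contractor clauses.
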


\begin{proof}
Let $P = \sum_{k=1}^\kappa g_k \lseg \sigma_k \rseg \in \calP_m(X,G)$, with the $\sigma_1,\ldots,\sigma_\kappa$ nonoverlapping. Since the statement to check is invariant under translation of $P$ we may assume 0 belongs to the support of $g_1\lseg \sigma_1\rseg$. Let $\pi \in \rmHom(X,W)$ where $W \in \bG_m(X)$ is the $m$ dimensional subspace of $X$ containing $\sigma_1$. Now $\pi_\#P \in \calP_m(W,G)$, $\partial \pi_\# P = 0$ and $\pi_\#P$ has compact support. Applying the Constancy Theorem \cite[Theorem 6.2]{DEP.HAR.14} with a large $m$ cube $Q \subset W$ such that $\rmspt(\pi_\#P) \subset \rmint Q$ we find that $\pi_\#P=0$. Therefore
\begin{equation*}
g_1 \lseg \pi(\sigma_1) \rseg = \pi_\#(g_1 \lseg \sigma_1 \rseg ) = - \pi_\# \sum_{k=2}^\kappa g_k \lseg \sigma_k \rseg = - \sum_{k=2}^\kappa g_k \lseg \pi(\sigma_k) \rseg \,.
\end{equation*}
The triangular inequality for $\calM_H$ thus implies 
\begin{multline*}
\lno g_1 \rno \calH^m_{\|\cdot\|}(\pi(\sigma_1)) = \calM_H(g_1 \lseg \pi(\sigma_1) \rseg) \leq \sum_{k=2}^\kappa \calM_H( g_k \lseg \pi(\sigma_k) \rseg)\\ = \sum_{k=2}^\kappa \lno g_k \rno \calH^m_{\|\cdot\|} (\pi(\sigma_k)) \,.
\end{multline*}
Now let $\mu$ be a density contractor for $W$. Integrating the above inequality with respect to $\mu$ yields the sought for inequality:
\begin{multline*}
\calM_H(g_1 \lseg \sigma_1 \rseg) = \lno g_1 \rno \calH^m_{\|\cdot\|}(\sigma_1) = \lno g_1 \rno \int_{\rmHom_m(X,X)} \calH^m_{\|\cdot\|}(\pi(\sigma_1)) d\mu(\pi) \\ \leq \sum_{k=2}^\kappa \lno g_k \rno \int_{\rmHom_m(X,X)} \calH^m_{\|\cdot\|} (\pi(\sigma_k)) d\mu(\pi) \leq \sum_{k=2}^\kappa\lno g_k \rno \calH^m_{\|\cdot\|} (\sigma_k) \\
= \sum_{k=2}^\kappa \calM_h( g_k \lseg \sigma_k \rseg) \,.
\end{multline*}
\end{proof}

\begin{Remark}
\label{rem-dc}
The following are two trivial cases of existence of density contractors. We recall that if $\pi : X \to W$ is Lipschitzian then $\calH^m_{\|\cdot\|}(\pi(A)) \leq (\rmLip \pi)^m \calH^m_{\|\cdot\|}(A)$, for every $A \subset X$, where the Lipschitz constant $\rmLip \pi$ is with respect to the norm $\|\cdot\|$ of $X$ and $W$. Thus in case $\pi$ is a projector onto $W$ (i.e. $\pi |_W = \rmid_W$) and $\rmLip \pi = 1$ then $\bdelta_\pi$ is readily a density contractor on $W$.
\begin{enumerate}
\item[(1)] If the norm $\|\cdot\|=|\cdot|$ is Euclidean and $m$ is arbitrary then the orthogonal projector $\pi : X \to W$ verifies the above condition.
\item[(2)] If the norm $\|\cdot\|$ is arbitrary and $m=1$ then there exists a projector $\pi : X \to W$ with $\rmLip\pi = 1$. Indeed letting $w$ be a unit vector spanning $W$, we choose $\alpha \in X^*$ such that $\rmLip \alpha =1$ and $\alpha(w)=1$, according to Hahn's theorem, and we define $\pi(x) = \alpha(x)w$.
\end{enumerate}
There does not always exist a projector $\pi : X \to W$ with $\rmLip = 1$, even when $m+1 = \dim X = 3$. For instance when $X = \ell_\infty^3$ and $W = X \cap \{(x_1,x_2,x_3) : x_1+x_2+x_3=0\}$ any projector $\pi : X \to W$ has $\rmLip \pi \geq 1 + 1/7$. This does not rule out the possibility that there be a projector onto $W$ that decreases the area $\calH^2_{\|\cdot\|_{\infty}}$ ; such projector actually exists according to the following classical result of H. Busemann.
\end{Remark}

\begin{Theorem}[Busemann, 1949]
\label{44}
Let $m = \dim X - 1$. For every $W \in \bG_m(X)$ there exists a projector $\pi$ onto $W$ with the following property. For every $V \in \bG_m(X)$ and every Borel set $A \subset V$ one has $\calH_{\|\cdot\|}^m(\pi(A)) \leq \calH_{\|\cdot\|}^m(A)$. 
\end{Theorem}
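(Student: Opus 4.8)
The plan is to construct the projector explicitly as a projection along a suitably chosen direction, and then to reduce the area inequality --- via the Busemann--Hausdorff density $\psi$ --- to a single convexity statement about the central sections of the unit ball $B_{\|\cdot\|}$, namely Busemann's classical theorem that the intersection body of a symmetric convex body is convex.

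Fix the $|\cdot|$-unit normal $n$ of $W$, so $W = n^\perp$. For $v \in X$ with $\langle v, n\rangle \neq 0$ put $\pi_v(x) = x - \langle x, n\rangle\,\langle v, n\rangle^{-1}v$; this is exactly the projector onto $W$ with kernel $\R v$. First I would record the elementary computation: for $V \in \bG_m(X)$ with $|\cdot|$-unit normal $u$, either $\langle v, u\rangle = 0$, in which case $\pi_v(V)$ lies in an $(m-1)$-dimensional affine subspace and $\calH^m_{\|\cdot\|}(\pi_v(A)) = 0$ for all $A \subset V$, or $\pi_v|_V \colon V \to W$ is a linear isomorphism with Euclidean Jacobian $|\langle v, u\rangle|/|\langle v, n\rangle|$. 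Combining this with the identity $\calH^m_{\|\cdot\|}\restr U = \psi(U)\,\calH^m_{|\cdot|}\restr U$ for $U = V$ and for $U = W$, one gets for every Borel $A \subset V$
\begin{equation*}
\calH^m_{\|\cdot\|}(\pi_v(A)) = \frac{\psi(W)}{\psi(V)}\cdot\frac{|\langle v, u\rangle|}{|\langle v, n\rangle|}\cdot\calH^m_{\|\cdot\|}(A) \,.
\end{equation*}
Normalising $v$ so that $\langle v, n\rangle = 1$, the theorem is therefore \emph{equivalent} to finding a vector $v$ with $\langle v, n\rangle = 1$ and $\psi(W)\,|\langle v, u\rangle| \leq \psi(u^\perp)$ for every $|\cdot|$-unit vector $u$.

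Next I would invoke the convexity input. By \eqref{eq.busemann.1}, $\psi(u^\perp) = \balpha(m)\big/\calH^m_{|\cdot|}(u^\perp \cap B_{\|\cdot\|})$; and by Busemann's theorem the map $u \mapsto \calH^m_{|\cdot|}(u^\perp \cap B_{\|\cdot\|})$ on the $|\cdot|$-unit sphere is the radial function of a symmetric convex body, so $\nu(u) := \balpha(m)^{-1}\psi(u^\perp)$ (for $|u| = 1$), extended $1$-homogeneously, is a norm on $X$. Because $\nu$ is even, the requirement $\psi(W)|\langle v, u\rangle| \leq \balpha(m)\nu(u)$ for all unit $u$ becomes, by homogeneity, $\langle v, u\rangle \leq \balpha(m)\psi(W)^{-1}\nu(u)$ for all $u \in X$, i.e. $v \in \balpha(m)\psi(W)^{-1}B_\nu^\circ$, the scaled polar of the unit ball of $\nu$. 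So everything reduces to the question: does the affine hyperplane $\{x : \langle x, n\rangle = 1\}$ meet the convex body $\balpha(m)\psi(W)^{-1}B_\nu^\circ$?

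The last step is a support-function computation. The hyperplane meets the body precisely when the support function of $\balpha(m)\psi(W)^{-1}B_\nu^\circ$ at $n$ is $\geq 1$; since the support function of $B_\nu^\circ$ equals $\nu$, this says $\balpha(m)\psi(W)^{-1}\nu(n) \geq 1$, i.e. $\psi(W) \leq \balpha(m)\nu(n)$. But $n$ is a unit vector, so $\balpha(m)\nu(n) = \psi(n^\perp) = \psi(W)$, and the inequality holds --- with equality. Hence $\{x : \langle x, n\rangle = 1\}$ supports $\balpha(m)\psi(W)^{-1}B_\nu^\circ$ and meets it in a nonempty compact face; choosing any $v$ in that face gives at once $v \in \balpha(m)\psi(W)^{-1}B_\nu^\circ$ (so $\pi_v$ never increases $\calH^m_{\|\cdot\|}$ on any $V$) and $\langle v, n\rangle = 1$ (so $\pi_v$ is genuinely a projector onto $W$), and then $\pi := \pi_v$ works. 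The one non-routine ingredient, and thus the main obstacle, is the appeal to Busemann's convexity theorem for intersection bodies; the Jacobian formula, the Haar normalisation of $\psi$, and the polarity and support-function facts are all standard.
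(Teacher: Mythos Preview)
Your proof is correct and follows precisely the route the paper sketches (the paper itself gives no proof, only a reference): it states that Busemann's theorem says the function $u \mapsto |u|\,\balpha(m)\big/\calH^m_{|\cdot|}(B_{\|\cdot\|}\cap\rmspan\{u\}^\perp)$ is a norm, and then cites \cite[Theorem 4.13]{ALV.THO.04} for how to extract the area-contracting projector from this. Your Jacobian computation together with the polarity/support-function argument is exactly the content of that deduction, so your write-up is a faithful expansion of the paper's sketch.
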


In fact H. Busemann shows that the function
\begin{equation*}
X \to \R : u \mapsto \frac{|u| \balpha(m)}{\calH^m_{| \cdot |}\left(B_{\|\cdot\|} \cap \rmspan\{u\}^\perp\right)}
\end{equation*}
is convex (hence a norm on $X$), see \cite{BUS.49} or \cite[\S 7.1]{THOMPSON}. The existence of the projector the follows for instance as in \cite[Theorem 4.13]{ALV.THO.04}.

\begin{Theorem}[Burago-Ivanov, 2012]
\label{45}
Let $m=2$ and assume the norm $\|\cdot\|$ is crystalline\footnote{i.e.\ its unit ball $B_{\|\cdot\|}$ is a polytope}. Given $W \in \bG_2(X)$ we let $u_1,\ldots, u_{2p}$ denote a collection of distinct unit vectors in $W$, numbered in consecutive order, such that $-u_i=u_{p+i-1}$ for each $i=1,\ldots,p$, and containing all  the vertices of the polygon $W \cap B_{\|\cdot\|}$. Let $\alpha_i \in X^*$ be a supporting functional of $B_{\|\cdot\|}$ such that $\alpha_i|_{\rmconv\{u_i,u_{i+1}\}} = 1$ and $\lambda_i = 2 \calH^2_{|\cdot|}(W \cap B_{\|\cdot\|})^{-1} \calH^2_{|\cdot|}(\rmconv\{0,u_i,u_{i+1}\})$, $i=1,\ldots,p$. Define $\omega \in \bigwedge^2X$ by the formula
\begin{equation*}
\omega = \balpha(2) \sum_{1 \leq i < j \leq p} \lambda_i \lambda_j . \alpha_i \wedge \alpha_j \,.
\end{equation*}
It follows that for every $V \in \bG_2(X)$ one has
\begin{equation*}
| \omega(v_1 \wedge v_2) | \leq \balpha(2) \sum_{1 \leq i < j \leq p} \lambda_i \lambda_j  \left| \la \alpha_i \wedge \alpha_j , v_1 \wedge v_2 \ra \right| \leq \psi(V)
\end{equation*}
where $v_1,v_2$ is an orthonormal basis of $V$, with both inequalities becoming equalities when $V=W$.
\end{Theorem}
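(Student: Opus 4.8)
\textbf{Reduction to a planar inequality.} The first inequality is merely the triangle inequality applied to the finite sum $\omega(v_1\wedge v_2)=\balpha(2)\sum_{1\le i<j\le p}\lambda_i\lambda_j\la\alpha_i\wedge\alpha_j,v_1\wedge v_2\ra$, so all the content is in the second inequality and the two equalities at $V=W$. Since $\psi(V)=\balpha(2)\big/\calH^2_{|\cdot|}(V\cap B_{\|\cdot\|})$, the second inequality is the dimensionless statement that $\calH^2_{|\cdot|}(V\cap B_{\|\cdot\|})\sum_{i<j}\lambda_i\lambda_j|\la\alpha_i\wedge\alpha_j,v_1\wedge v_2\ra|\le 1$; we may assume $\alpha_1|_V,\dots,\alpha_p|_V$ span $V^*$, else the left side vanishes. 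Use $v_1,v_2$ to identify $V$ with $\R^2$ carrying its standard area form $[\cdot,\cdot]$, and let $c_i\in V$ satisfy $\alpha_i(x)=[x,c_i]$ on $V$. A one-line computation gives $\la\alpha_i\wedge\alpha_j,v_1\wedge v_2\ra=[c_i,c_j]$; recalling $\calH^2_{|\cdot|}\big(\sum_i[-a_i,a_i]\big)=4\sum_{i<j}|[a_i,a_j]|$ this means $\sum_{i<j}\lambda_i\lambda_j|[c_i,c_j]|=\tfrac14\calH^2_{|\cdot|}(Z_V)$ with $Z_V:=\sum_{i=1}^p\lambda_i[-c_i,c_i]$, a centrally symmetric zonotope. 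Since $\pm\alpha_i$ are supporting functionals of $B_{\|\cdot\|}$, we have $|\alpha_i|\le 1$ on $V\cap B_{\|\cdot\|}$, so $V\cap B_{\|\cdot\|}\subseteq\Pi_V:=\{x\in V:|[x,c_i]|\le1,\ i=1,\dots,p\}$, which is, up to the rotation by $\pi/2$, the polar body $Q_V^{\circ}$ of $Q_V:=\rmconv\{\pm c_1,\dots,\pm c_p\}$; moreover $Z_V\subseteq Q_V$ because $\sum_i\lambda_i=1$ and $Q_V$ is convex. Hence the second inequality follows from the purely two-dimensional assertion
\[
(\star)\qquad \calH^2_{|\cdot|}(Q_V^{\circ})\cdot\calH^2_{|\cdot|}(Z_V)\le 4 .
\]

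\textbf{The planar inequality $(\star)$ — the main obstacle.} The left side of $(\star)$ is invariant under the $\mathrm{GL}_2$-action on the configuration $(c_i)$, so it may be normalized freely. It is a Blaschke--Santal\'o type inequality coupling a centrally symmetric polygon $Q=\rmconv\{\pm c_i\}$ with the zonotope $Z=\sum\lambda_i[-c_i,c_i]$ generated by scaled copies of its vertices: one must genuinely use the coherence among the summands, since applying $|[c_i,c_j]|\,\calH^2_{|\cdot|}(V\cap B_{\|\cdot\|})\le 4$ term by term, or using $\calH^2_{|\cdot|}(Q^{\circ})\calH^2_{|\cdot|}(Z^{\circ})\le\pi^2$ (from $Z\subseteq Q$ and Santal\'o), is too weak. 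This is exactly the calibration estimate of Burago and Ivanov \cite{BUR.IVA.12}; I would either invoke it directly or reprove it by a compactness and variational analysis showing that a configuration maximizing the bounded, $\mathrm{GL}_2$-invariant left side of $(\star)$ reduces to one supported on two antipodal pairs with equal weights, i.e.\ a parallelogram, for which $(\star)$ is the elementary $4\lambda_1\lambda_2\le1$ (affine-regular hexagons and, in the limit, the Euclidean disk also give equality). This step is where the real difficulty lies.

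\textbf{Equality at $V=W$.} Write $\epsilon_k=u_{k+1}-u_k$ (indices mod $2p$) for the consecutive edge vectors of $D:=W\cap B_{\|\cdot\|}$, so $\epsilon_{k+p}=-\epsilon_k$ and $\sum_{k=1}^{2p}\epsilon_k=0$. As $\alpha_i\equiv1$ on $\rmconv\{u_i,u_{i+1}\}$, its restriction to $W$ is $x\mapsto[x,u_{i+1}-u_i]/[u_i,u_{i+1}]$, so $c_i=\epsilon_i/[u_i,u_{i+1}]$; combined with $\lambda_i=[u_i,u_{i+1}]/\calH^2_{|\cdot|}(D)$ this gives $\lambda_i c_i=\epsilon_i/\calH^2_{|\cdot|}(D)$. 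Since the $u_i$ are in cyclic order the directions of $\epsilon_1,\dots,\epsilon_p$ increase through an angle $<\pi$, so all $[c_i,c_j]$ with $1\le i<j\le p$ are positive; hence the triangle inequality used for the first inequality is an equality at $W$, and also $\Pi_W=D$ (each edge of $D$ joins two consecutive $u_k$, hence lies on a line $\{\alpha_i=1\}$), so the inclusion $W\cap B_{\|\cdot\|}\subseteq\Pi_W$ is an equality. Finally, substituting $u_k=u_1+\sum_{l<k}\epsilon_l$ into $\calH^2_{|\cdot|}(D)=\tfrac12\sum_{k=1}^{2p}[u_k,u_{k+1}]$ and using $\sum_k\epsilon_k=0$, $\epsilon_{k+p}=-\epsilon_k$ yields $\sum_{1\le i<j\le p}[\epsilon_i,\epsilon_j]=\calH^2_{|\cdot|}(D)$, whence
\[
\balpha(2)\sum_{1\le i<j\le p}\lambda_i\lambda_j\big|\la\alpha_i\wedge\alpha_j,w_1\wedge w_2\ra\big|=\frac{\balpha(2)}{\calH^2_{|\cdot|}(D)^2}\sum_{1\le i<j\le p}[\epsilon_i,\epsilon_j]=\frac{\balpha(2)}{\calH^2_{|\cdot|}(D)}=\psi(W),
\]
the asserted equality; equivalently, $(\star)$ is an equality at $W$ because the centrally symmetric polygon $D$ is the zonotope built from its own half-edges and the weights $\lambda_i$ are tuned exactly so that $(Z_W,Q_W)$ is an extremal pair.
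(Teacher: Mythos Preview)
The paper does not actually prove this theorem: immediately after the statement it says ``This is the main result of \cite{BUR.IVA.12}'' and only notes two minor differences of formulation. So there is no ``paper's own proof'' to compare against; your sketch is strictly more informative than what the paper provides.

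Your reduction is correct and in fact reproduces the architecture of the Burago--Ivanov argument. The identification $\la\alpha_i\wedge\alpha_j,v_1\wedge v_2\ra=[c_i,c_j]$, the zonotope area formula $\calH^2_{|\cdot|}\bigl(\sum_i[-a_i,a_i]\bigr)=4\sum_{i<j}|[a_i,a_j]|$, the inclusion $V\cap B_{\|\cdot\|}\subset\Pi_V$ via $|\alpha_i|\le1$ on $B_{\|\cdot\|}$, and the identification of $\Pi_V$ with a rotated copy of $Q_V^{\circ}$ are all right, and lead cleanly to the planar inequality $(\star)$. Your equality-case computation is also correct: the identity $\sum_{1\le i<j\le p}[\epsilon_i,\epsilon_j]=\calH^2_{|\cdot|}(D)$ follows exactly as you indicate, and the positivity of all $[c_i,c_j]$ for $i<j$ at $V=W$ collapses the triangle inequality to an equality.

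Two remarks. First, what you call $(\star)$ is needed for \emph{arbitrary} vectors $c_1,\dots,c_p\in\R^2$ and \emph{arbitrary} nonnegative weights $\lambda_i$ with $\sum_i\lambda_i=1$ (the $c_i$ arise from restricting the fixed $\alpha_i$ to a varying $V$, while the $\lambda_i$ are fixed by $W$); this is indeed the heart of \cite{BUR.IVA.12}, so invoking that reference here is exactly what the paper itself does for the whole theorem. Second, your suggested alternative---a compactness/variational argument reducing to $p=2$ with equal weights---is plausible (the functional is $\mathrm{GL}_2$-invariant and continuous on a compact quotient, and parallelograms and affine-regular hexagons both give the value $4$), but as written it is only a strategy; turning it into a proof requires showing that any non-parallelogram extremizer can be perturbed to increase the product, which is essentially the content of Burago--Ivanov's lemma. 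So your sketch is correct modulo $(\star)$, and for $(\star)$ you are, like the paper, relying on \cite{BUR.IVA.12}.
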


This is the main result of \cite{BUR.IVA.12} (see the discussion at the beginning of $\S2$ and Proposition 2.2 therein). We should point out that our formulation differs from that in \cite{BUR.IVA.12} in two respects: First, we stress the middle inequality in the conclusion; Second, there may be more unit vectors $u_1,\ldots,u_{2p}$ in our statement than there are vertices of $W \cap B_{\|\cdot\|}$ but same argument as in \cite{BUR.IVA.12} applies in this slightly more general situation (which is needed in the proof of \ref{48}).   We now proceed to showing how it leads to the existence of density contractors in case $m=2$. We start with two easy and useful observations.

\begin{Proposition}
\label{46}
Let $\mu$ be a Borel probability measure on $\rmHom_m(X,X)$, $W \in \bG_m(X)$, and assume $\mu$ is supported in $\rmHom(X,W)$. The following are equivalent.
\begin{enumerate}
\item[(1)] $\mu$ is a density contractor on $W$;
\item[(2)] For every $V \in \bG_m(X)$ there exists some Borel subset $A \subset V$ with $0 < \calH^m_{\|\cdot\|}(A) < \infty$ and
\begin{equation*}
\int_{\rmHom_m(X,X)} \calH^m_{\|\cdot\|}(\pi(A)) d\mu(\pi) \leq  \calH^m_{\|\cdot\|}(A)\,,
\end{equation*}
with equality when $V=W$;
\item[(3)] For every $V \in \bG_m(X)$ there exists some Borel subset $A \subset V$ with $0 < \calH^m_{|\cdot|}(A) < \infty$ and
\begin{equation*}
\int_{\rmHom_m(X,X)} \calH^m_{|\cdot|}(\pi(A)) d\mu(\pi) \leq \frac{\psi(V)}{\psi(W)} \calH^m_{|\cdot|}(A)\,,
\end{equation*}
with equality when $V=W$.
\end{enumerate}
\end{Proposition}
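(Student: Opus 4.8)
The plan is to show that all three conditions are equivalent to a single scalar requirement: that a constant $\Lambda(V) \in [0,\infty]$, attached to each $V \in \bG_m(X)$ and independent of the test set, satisfy $\Lambda(V) \le 1$ for all $V$, with $\Lambda(W) = 1$. The engine behind this is the following claim, which I would establish first: for every $V \in \bG_m(X)$ and every $\pi \in \rmHom_m(X,X)$, the set function $A \mapsto \calH^m_{\|\cdot\|}(\pi(A))$ on Borel subsets of $V$ is a fixed, possibly vanishing, multiple $J_\pi(V)$ of $\calH^m_{\|\cdot\|} \hel V$.

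To prove the claim, fix $V$ and $\pi$. If $\rmrank(\pi|_V) < m$ then $\pi(V)$ is a linear subspace of dimension $< m$, hence $\calH^m_{\|\cdot\|}(\pi(V)) = 0$ and the claim holds with $J_\pi(V) = 0$. If $\rmrank(\pi|_V) = m$ then $\pi|_V$ is a linear isomorphism of $V$ onto an $m$ dimensional subspace of $X$, and, being the pullback of $\calH^m_{\|\cdot\|} \hel \pi(V)$ through this homeomorphism, the set function $A \mapsto \calH^m_{\|\cdot\|}(\pi(A))$ on Borel subsets of $V$ is a nonzero, locally finite, translation invariant Borel measure on $V$; by uniqueness of Haar measure it equals $J_\pi(V) \left( \calH^m_{\|\cdot\|} \hel V \right)$ for some $J_\pi(V) \in (0,\infty)$. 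Taking the reference set $A_V = V \cap B_{\|\cdot\|}$, for which $\calH^m_{\|\cdot\|}(A_V) = \balpha(m)$ by \eqref{eq.busemann.1}, we get $J_\pi(V) = \balpha(m)^{-1} \calH^m_{\|\cdot\|}(\pi(A_V))$, so $\pi \mapsto J_\pi(V)$ is Borel by \ref{41}(3). Writing $\Lambda(V) = \int_{\rmHom_m(X,X)} J_\pi(V)\, d\mu(\pi)$ and pulling the constant $\calH^m_{\|\cdot\|}(A)$ out of the integral yields
\[
\int_{\rmHom_m(X,X)} \calH^m_{\|\cdot\|}(\pi(A))\, d\mu(\pi) \;=\; \Lambda(V)\, \calH^m_{\|\cdot\|}(A) \qquad \text{for every Borel } A \subset V .
\]

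Given this, $(1)\Leftrightarrow(2)$ is essentially bookkeeping. Condition (1) for a fixed $V$ reads $\Lambda(V)\calH^m_{\|\cdot\|}(A) \le \calH^m_{\|\cdot\|}(A)$ for all Borel $A \subset V$ (with equality when $V = W$); since the cases $\calH^m_{\|\cdot\|}(A) \in \{0,\infty\}$ are automatic (for $0$ by $\calH^m_{\|\cdot\|}(\pi(A)) \le (\rmLip_{\|\cdot\|}\pi)^m \calH^m_{\|\cdot\|}(A)$), this is equivalent to $\Lambda(V) \le 1$, with $\Lambda(W) = 1$. Condition (2) for a fixed $V$ asserts the same inequality for a single admissible $A$, hence --- the ratio being $A$-independent --- is equivalent to the identical requirement. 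The implication $(1)\Rightarrow(2)$ then only needs one Borel subset of $V$ with positive finite $\calH^m_{\|\cdot\|}$ measure, e.g. $V \cap B_{\|\cdot\|}$; and $(2)\Rightarrow(1)$ propagates the inequality from the witnessing set to every Borel $A \subset V$ through the displayed identity, the remaining case $\calH^m_{\|\cdot\|}(A) = \infty$ with $V = W$ being handled by noting $\Lambda(W) = 1$ forces $\mu(\{\pi : J_\pi(W) > 0\}) > 0$.

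Lastly, $(2) \Leftrightarrow (3)$ is a direct, term-by-term translation. Since $\mu$ is carried by $\rmHom(X,W)$, for $\mu$-almost every $\pi$ one has $\pi(A) \subset \rmim\pi \subset W$; moreover either $\rmrank(\pi|_V) = m$, in which case $\pi|_V$ maps $V$ isomorphically onto $\rmim\pi = W$ and $\pi(A)$ is Borel in $W$, or $\rmrank(\pi|_V) < m$, in which case $\calH^m_{|\cdot|}(\pi(A)) = \calH^m_{\|\cdot\|}(\pi(A)) = 0$. In both cases $\calH^m_{\|\cdot\|}(\pi(A)) = \psi(W)\calH^m_{|\cdot|}(\pi(A))$ by the characterization of $\psi(W)$, so $\int \calH^m_{\|\cdot\|}(\pi(A))\, d\mu(\pi) = \psi(W) \int \calH^m_{|\cdot|}(\pi(A))\, d\mu(\pi)$. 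Combined with $\calH^m_{\|\cdot\|}(A) = \psi(V)\calH^m_{|\cdot|}(A)$ for $A \subset V$ and $0 < \psi(V) < \infty$ (so that the side conditions $0 < \calH^m_{\|\cdot\|}(A) < \infty$ and $0 < \calH^m_{|\cdot|}(A) < \infty$ coincide), the inequality in (2) becomes precisely $\psi(W)/\psi(V)$ times the inequality in (3), equalities included. I expect the only genuinely delicate points to be the Borel measurability of $\pi \mapsto \calH^m_{\|\cdot\|}(\pi(A))$ for Borel $A$, which is exactly \ref{41}(3), and the uniqueness-of-Haar-measure reduction in the key claim; the rest is routine.
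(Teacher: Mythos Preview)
Your proof is correct and follows essentially the same route as the paper's: reduce everything to the uniqueness of Haar measure on each $V$, so that the inequality for one test set propagates to all. The only cosmetic difference is where the Haar argument is applied --- you apply it pointwise in $\pi$ to produce a Jacobian $J_\pi(V)$ and then integrate, whereas the paper applies it directly to the integrated set function $\phi(A)=\int \calH^m_{\|\cdot\|}(\pi(A))\,d\mu(\pi)$ after checking (via the same injectivity dichotomy on $\pi|_V$ and monotone convergence) that $\phi$ is a translation invariant Borel measure on $V$.
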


\begin{proof}
Recalling that $\calH^m_{\|\cdot\|}(E) = \psi(V) \calH^m_{|\cdot|}(E)$ whenever $E \subset V \in \bG_m(X)$ and $E$ is Borel, we infer at once that $\mu$ is a density contractor on $W$ if and only if the condition in (3) holds for every Borel $A \subset V$. Thus clearly (3) is a consequence of (1), and is equivalent to (2). In order to establish that (2) implies (1) we fix $V \in \bG_m(X)$ and we define
\begin{equation*}
\phi(A) = \int_{\rmHom(X,W)} \calH^m_{\|\cdot\|}(\pi(A)) d\mu(\pi) \,,
\end{equation*}
$A \in \calB(V)$. We must show that if $\phi(A) \leq  \calH^m_{\|\cdot\|}(A)$ (resp. $\phi(A) = \calH^m_{\|\cdot\|}(A)$ when $V=W$) for some $A \in \calB(V)$ such that $0 < \calH^m_{|\cdot|}(A) < \infty$ then it holds for all $A \in \calB(V)$. We define
\begin{equation*}
G_{W,V} = \rmHom(X,W) \cap \{ \pi : \pi|V \text{ is injective} \} \,.
\end{equation*}
Clearly $G_{W,V}$ is an open subset of $\rmHom(X,W)$ and $\calH^m_{\|\cdot\|}(\pi(A))=0$ whenever $\pi \not\in G_{W,V}$. When $\pi \in G_{W,V}$ we abbreviate $\phi_\pi(A) = \calH^m_{\|\cdot\|}(\pi(A))$. Since $\pi|V$ is a homeomorphism from $V$ to $W$ it follows that $\phi_\pi$ is a measure on $\calB(V)$. Since $\phi(A) = \int_{G_{W,V}} \phi_\pi(A)d\mu(\pi)$, it ensues from the monotone convergence theorem that $\phi$ is a measure as well on $\calB(V)$. If $h \in V$ and $\pi \in \rmHom(X,W)$ then $\phi_\pi(A+h)=\phi_\pi(A)$ because $\pi$ is linear and $\calH^m_{\|\cdot\|}$ is translation invariant. Therefore $\phi$ is also translation invariant. Now either $\phi=0$ and there is nothing to prove or $\phi$ is one of Haar measures on $V$ and the conclusion follows from their uniqueness up to a multiplicative factor, because (the restriction to $\calB(V)$ of) $\calH^m_{\|\cdot\|}$ is also a Haar measure on $V$.
\end{proof}

In our next observation we consider density contractors with respect to a sequence of norms on $X$. Rather than using the ambiguous notation $\|\cdot\|_j$, $j=1,2,\ldots$ for a sequence of norms, we prefer using $\nu_j$, $j=1,2,\ldots$.

\begin{Proposition}
\label{47}
Let $\nu,\nu_1,\nu_2,\ldots$ be a sequence of norms on $X$, let $W \in \bG_m(X)$ and let $\mu_1,\mu_2,\ldots$, be a sequence of probability measures on $\rmHom_m(X,X)$ all supported in $\rmHom(X,W)$. We assume that
\begin{enumerate}
\item[(1)] Each $\mu_k$ is a density contractor on $W$ with respect to $\calH^m_{\nu_k}$, $k=1,2,\ldots$;
\item[(2)] $\nu_k \to \nu$ as $k \to \infty$;
\item[(3)] The sequence $\mu_1,\mu_2,\ldots$ is uniformly tight, i.e.
\begin{equation*}
\lim_{n \to \infty} \sup_{k =1,2,\ldots} \mu_k \left( \rmHom(X,W) \cap \{ \pi : \vvvert \pi \vvvert \geq n \} \right) = 0 \,;
\end{equation*}
\item[(4)] There exists a compact convex $A \subset W$ such that $0 < \calH^m_{|\cdot|}(A)$ and
\begin{equation*}
\lim_{n \to \infty} \sup_{k =1,2,\ldots} \int_{ \rmHom(X,W) \cap \{ \pi : \vvvert \pi \vvvert \geq n \} } \calH^m_{\nu_k}(\pi(A))d\mu_k(\pi) = 0 \,.
\end{equation*}
\end{enumerate}
It follows that there exists a density contractor on $W$ with respect to $\calH^m_\nu$.
\end{Proposition}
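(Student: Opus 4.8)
The plan is to extract a weak limit $\mu$ of the measures $\mu_k$ and then verify, by means of \ref{46}, that $\mu$ is a density contractor on $W$ with respect to $\calH^m_\nu$. First I would note that, $X$ being finite dimensional, each set $\rmHom(X,W)\cap\{\pi:\vvvert\pi\vvvert\leq n\}$ is compact, so hypothesis (3) says precisely that the family $(\mu_k)_k$ is uniformly tight; Prokhorov's theorem then provides a subsequence, which I will not relabel, converging weakly to some Borel probability measure $\mu$ on $\rmHom_m(X,X)$. Since $\rmHom_m(X,X)\setminus\rmHom(X,W)$ is open in $\rmHom_m(X,X)$ and $\mu_k$-negligible for every $k$, the portmanteau theorem forces $\mu(\rmHom_m(X,X)\setminus\rmHom(X,W))=0$, so $\mu$ satisfies condition (1) of the definition of density contractor.

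Next I would record the continuity of the Busemann--Hausdorff density in the norm variable: for each fixed $V\in\bG_m(X)$ one has $\psi_{\nu_k}(V)\to\psi_\nu(V)$ as $k\to\infty$. This is because $\psi_\nu(V)=\beta_X(\nu,|\cdot|,V)$ depends only on $\nu|_V$, while $\nu_k|_V\to\nu|_V$ pointwise by hypothesis (2) and \ref{b.m}(1); the convergence of the associated constants $\beta$ then follows from \eqref{eq.fri.1} exactly as in the proof of \ref{25}. In particular $\psi_{\nu_k}(W)\to\psi_\nu(W)>0$, so there is a constant $c>0$ with $\psi_{\nu_k}(W)\geq c$ for all $k$.

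With these two facts in hand I would check, for each fixed $V\in\bG_m(X)$, the instance of \ref{46}(3) relative to $\mu$ and $\nu$. Choose a compact convex set $A\subset V$ with $0<\calH^m_{|\cdot|}(A)<\infty$; when $V=W$ take $A$ to be the set supplied by hypothesis (4). By \ref{41}(2) the function $g(\pi):=\calH^m_{|\cdot|}(\pi(A))$ is continuous and nonnegative on $\rmHom_m(X,X)$. Since $\mu_k$ is supported in $\rmHom(X,W)$ and $\calH^m_{\nu_k}(E)=\psi_{\nu_k}(U)\calH^m_{|\cdot|}(E)$ whenever $E$ is a Borel subset of some $U\in\bG_m(X)$, the density contractor inequality for $\mu_k$ applied to $A$ reads $\psi_{\nu_k}(W)\int g\,d\mu_k\leq\psi_{\nu_k}(V)\calH^m_{|\cdot|}(A)$, with equality when $V=W$. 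Dividing by $\psi_{\nu_k}(W)$ and using the portmanteau inequality for lower semicontinuous integrands bounded below, together with the previous paragraph, gives
\[
\int g\,d\mu\leq\liminf_k\int g\,d\mu_k\leq\liminf_k\frac{\psi_{\nu_k}(V)}{\psi_{\nu_k}(W)}\calH^m_{|\cdot|}(A)=\frac{\psi_\nu(V)}{\psi_\nu(W)}\calH^m_{|\cdot|}(A),
\]
which is the inequality required by \ref{46}(3), and in particular settles the case $V\neq W$ entirely.

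The one remaining point --- and the step I expect to be the real obstacle --- is the reverse inequality $\int g\,d\mu\geq\calH^m_{|\cdot|}(A)$ in the case $V=W$ (where $\psi_\nu(V)/\psi_\nu(W)=1$): here $g$ is unbounded and weak convergence alone will not propagate the equalities $\int g\,d\mu_k=\calH^m_{|\cdot|}(A)$, and this is precisely where hypothesis (4) is needed. Since for $\pi\in\rmHom(X,W)$ one has $\calH^m_{\nu_k}(\pi(A))=\psi_{\nu_k}(W)g(\pi)\geq c\,g(\pi)$, hypothesis (4) gives $\veps_n:=\sup_k\int_{\{\vvvert\pi\vvvert\geq n\}}g\,d\mu_k\to0$. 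I would then introduce continuous cutoffs $\chi_n:[0,\infty)\to[0,1]$ increasing to $1$, with $\chi_n\equiv1$ on $[0,n]$ and $\chi_n\equiv0$ on $[n+1,\infty)$, and set $g_n(\pi):=g(\pi)\chi_n(\vvvert\pi\vvvert)$, a compactly supported continuous function with $g_n\uparrow g$. Weak convergence gives $\int g_n\,d\mu=\lim_k\int g_n\,d\mu_k$; since $0\leq g-g_n\leq g\,\ind_{\{\vvvert\pi\vvvert\geq n\}}$ and $\int g\,d\mu_k=\calH^m_{|\cdot|}(A)$ for every $k$, this yields $\calH^m_{|\cdot|}(A)-\veps_n\leq\int g_n\,d\mu\leq\calH^m_{|\cdot|}(A)$, and letting $n\to\infty$ together with monotone convergence gives $\int g\,d\mu=\calH^m_{|\cdot|}(A)$. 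Having verified \ref{46}(3) for every $V\in\bG_m(X)$, \ref{46} then shows that $\mu$ is a density contractor on $W$ with respect to $\calH^m_\nu$, which is the assertion.
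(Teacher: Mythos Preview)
Your proof is correct and follows the same overall architecture as the paper's: extract a weak limit via Prokhorov, check that it is supported in $\rmHom(X,W)$, verify the criterion \ref{46} by passing to the limit in the density contractor inequalities, and use hypothesis (4) together with cutoffs $\chi_n$ and monotone convergence to upgrade the inequality to an equality when $V=W$.

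There is one genuine streamlining in your argument compared to the paper. The paper works directly with the $\nu$- and $\nu_k$-dependent functions $f(\pi)=\calH^m_\nu(\pi(A))$ and $f_k(\pi)=\calH^m_{\nu_k}(\pi(A))$, and must then prove that $\chi_n f_k\to\chi_n f$ uniformly (using \eqref{eq.fri.1} and \ref{b.m}(2)) in order to combine the weak convergence of $\mu_k$ with the varying integrands. You instead observe that for $\pi\in\rmHom(X,W)$ one has $\calH^m_{\nu_k}(\pi(A))=\psi_{\nu_k}(W)\,\calH^m_{|\cdot|}(\pi(A))$, factor out the scalar $\psi_{\nu_k}(W)$, and work throughout with the single Euclidean function $g(\pi)=\calH^m_{|\cdot|}(\pi(A))$, which is continuous by \ref{41}(2) and independent of $k$. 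This collapses the paper's uniform-convergence step into the elementary convergence $\psi_{\nu_k}(V)/\psi_{\nu_k}(W)\to\psi_\nu(V)/\psi_\nu(W)$, and makes the appeal to \ref{46}(3) more transparent. The paper's route, by contrast, keeps the argument phrased in terms of $\calH^m_{\nu_k}$ and $\calH^m_\nu$ throughout, which matches the statement of \ref{46}(2) more literally but at the cost of an extra estimate.
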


\begin{proof}
We first notice that $\mu_1,\mu_2,\ldots$ admits a subsequence (still denoted the same way) converging tightly to some Borel probability measure $\mu$ on $\rmHom_m(X,X)$, also supported in $\rmHom(X,W)$, according to assumption (3) and Prokhorov's Theorem \cite[Chapter II Theorem 6.7]{PARTHASARATHY}. In other words
\begin{equation}
\label{eq.thu.3}
\int_{\rmHom_m(X,X)} f(\pi) d\mu_k(\pi) \to \int_{\rmHom_m(X,X)} f(\pi) d\mu(\pi) \text{ as } k \to \infty
\end{equation}
whenever $f : \rmHom_m(X,X) \to \R$ is continuous and bounded. Assumption (1) says that
\begin{equation}
\label{eq.thu.1}
\int_{\rmHom_m(X,X)} \calH^m_{\nu_k}(\pi(A)) d\mu_k(\pi) \leq \calH^m_{\nu_k}(A)
\end{equation}
for every $k=1,2,\ldots$, every $V \in \bG_m(X)$ and every Borel $A \subset V$, with equality when $V=W$. Assumption (2) is that $\delta(\nu,\nu_k) \to 1$ and $k \to \infty$. According to \ref{46} it suffices to establish that \eqref{eq.thu.1} holds with $\nu_k$ replaced by $\nu$, $\mu_k$ replaced by $\mu$, for some Borel $A \subset V$ such that $0 < \calH^m_\nu(A) < \infty$, with equality when $V=W$. We will start by proving the inequality case.
\par 
Fix $V \in \bG_m(X)$ and choose a compact convex set $A \subset X$ such that $0 < \calH^m_{|\cdot|}(A)$, for instance $A = V \cap B_{|\cdot|}$. We define on $\rmHom_m(X,X)$ the real valued functions $f,f_1,f_2,\ldots$ by the formula $f(\pi) = \calH^m_\nu(\pi(A))$ and $f_k(\pi) = \calH^m_{\nu_k}(\pi(A))$, $k=1,2,\ldots$. Letting $\calK_{\mathrm{conv}}(W)$ denote the space of nonempty compact convex subsets of $W$ endowed with its Hausdorff metric, we notice that $\rmHom(X,W) \to \calK_{\mathrm{conv}}(W) : \pi \mapsto \pi(A)$ is continuous since $\rmdist_\calH(\pi(A) , \tilde{\pi}(A)) \leq \vvvert \pi - \tilde{\pi} \vvvert \rmdiam ( A \cup \{0\})$. If $\calH$ is a Haar measure on $W$ then its restriction $\calK_{\mathrm{conv}}(W) \to \R : C \mapsto \calH(C)$ is continuous, see for instance \cite[3.2.36]{GMT}. It therefore follows that the $f,f_1,f_2,\ldots$ are all continuous. As 
\begin{equation}
\label{eq.thu.2}
f_k(\pi) = \calH^m_{\nu_k}(\pi(A)) \leq (\rmLip_{\nu_k} \pi)^m \calH^m_{\nu_k}(A)
\end{equation}
we note that they do not need to be bounded. Letting $\Gamma = \sup_{k=1,2,\ldots} \delta(\nu_k,|\cdot|)$ we note that $\Gamma < \infty$ and that $\rmLip_{\nu_k}(\pi) \leq \Gamma^2 \vvvert \pi \vvvert$, $k=1,2,\ldots$, as well as $\rmLip_{\nu}(\pi) \leq \Gamma^2 \vvvert \pi \vvvert$ whenever $\pi \in \rmHom_m(X,X)$. Given $n=1,2,\ldots$ we choose a continuous (cut-off function) $\chi_n : \rmHom_m(X,X) \to \R$ such that
\begin{equation}
\label{eq.cutoff}
\ind_{\rmHom_m(X,X) \cap \{ \pi : \vvvert \pi \vvvert \leq n \}} \leq \chi_n \leq
\ind_{\rmHom_m(X,X) \cap \{ \pi : \vvvert \pi \vvvert \leq n+1 \}} \,.
\end{equation}
Since are $\chi_n f_k$ compactly supported, they are bounded. In fact 
\begin{equation*}
\|\chi_n f_k \|_\infty \leq \left( \Gamma^2 (n+1) \right)^m \Gamma^m \calH^m_{|\cdot|}(A)
\end{equation*}
according to \eqref{eq.thu.2}, \eqref{eq.fri.1} and \ref{b.m}(2). We now show that $\chi_n f_1, \chi_n f_2,\ldots$ converge uniformly to $\chi_n f$. Observe that
\begin{multline*}
| f_k(\pi) - f(\pi) | = \left( \beta(\nu_k,\nu,W) - 1 \right) \calH^m_\nu(\pi(A)) \\ \leq \left( \beta(\nu_k,\nu,W) - 1 \right) \Gamma^{2m} \vvvert \pi \vvvert^m \calH^m_\nu(A) \,,
\end{multline*}
for every $\pi \in \rmHom_m(X,X)$, whence
\begin{equation*}
\| \chi_n f_k - \chi_n f \|_\infty \leq \left( \beta(\nu_k,\nu,W) - 1 \right)\Gamma^{2m} (n+1)^m \calH^m_\nu(A) \,.
\end{equation*}
Now $\delta(\nu_k|_W,\nu|_W)^{-m} \leq \beta(\nu_k,\nu,W) \leq \delta(\nu_k|_W,\nu|_W)^m$ according to \eqref{eq.fri.1} and $\delta(\nu_k|_W,\nu|_W) \to 1$ as $k \to \infty$ according \ref{b.m}(2) and assumption (2), thus $\| \chi_n f_k - \chi_n f \|_\infty  \to 0$ as $k \to \infty$. This together with \eqref{eq.thu.3} yields classically that
\begin{equation}
\label{eq.thu.4}
\int_{\rmHom_m(X,X)} \chi_n f_k d\mu_k \to \int_{\rmHom_m(X,X)} \chi_n f d\mu \text{ as } k \to \infty \,,
\end{equation}
simply because
\begin{multline*}
\left| \int_{\rmHom_m(X,X)} \chi_n f_k d\mu_k - \int_{\rmHom_m(X,X)} \chi_n f d\mu \right| \\ \leq \| \chi_n f_k - \chi_n f \|_\infty  + \left| \int_{\rmHom_m(X,X)} \chi_n f d\mu_k - \int_{\rmHom_m(X,X)} \chi_n f d\mu \right| \,.
\end{multline*}
Now \eqref{eq.thu.4} and \eqref{eq.thu.1} imply that
\begin{equation*}
\begin{split}
\int_{\rmHom_m(X,X)} \chi_n(\pi) \calH^m_\nu(\pi(A)) d\mu(\pi) & = \lim_k \int_{\rmHom_m(X,X)} \chi_n(\pi) \calH^m_{\nu_k}(\pi(A)) d\mu_k(\pi) \\
&\leq \liminf_k \int_{\rmHom_m(X,X)} \calH^m_{\nu_k}(\pi(A)) d\mu_k(\pi) \\
&\leq \liminf_k \calH^m_{\nu_k}(A) \\
&= \left( \liminf_k \beta(\nu_k,\nu,V) \right) \calH^m_\nu(A) \\
& = \calH^m_\nu(A) \,. 
\end{split}
\end{equation*}
Letting $n \to \infty$ and referring to the monotone convergence theorem we obtain
\begin{equation}
\int_{\rmHom_m(X,X)}  \calH^m_\nu(\pi(A)) d\mu(\pi) \leq \calH^m_\nu(A) \,. 
\end{equation}

The inequality case in \ref{46}(2) is now established and it remains only to show that the above becomes an inequality when $V=W$. This is where assumption (4) turns up. We keep the same notations as above but we reason in the particular case when $V=W$ and $A$ is the set given in assumption (4). Our extra information is that for each $n=1,2,\ldots$ there exists $\veps_n >0$ with
\begin{equation*}
\sup_{k=1,2,\ldots} \int_{\rmHom_m(X,X)} \left( 1 - \chi_n \right) f_k d\mu_k \leq \veps_n
\end{equation*}
and $\veps_n \to 0$ as $n \to \infty$. Thus for every $n,k=1,2,\ldots$,
\begin{multline}
\label{eq.thu.5}
\left| \int_{\rmHom_m(X,X)} f_k d\mu_k - \int_{\rmHom_m(X,X)} \chi_n f d\mu \right| \\
\leq \int_{\rmHom_m(X,X)} \left( 1 - \chi_n \right) f_k d\mu_k + \left| \int_{\rmHom_m(X,X)} \chi_n f_k d\mu_k - \int_{\rmHom_m(X,X)} \chi_n f d\mu \right| \\
\leq \veps_n + \left| \int_{\rmHom_m(X,X)} \chi_n f_k d\mu_k - \int_{\rmHom_m(X,X)} \chi_n f d\mu \right| \,.
\end{multline}
Recalling our assumption that $\mu_k$ is a density contractor on $W$ we infer that
\begin{multline*}
\lim_k \int_{\rmHom_m(X,X)} f_k d\mu_k = \lim_k \int_{\rmHom_m(X,X)} \calH^m_{\nu_k}(\pi(A))d\mu_k(\pi) \\= \lim_k \calH^m_{\nu_k}(A) = \calH^m_\nu(A) \,.
\end{multline*}
Using this together with \eqref{eq.thu.4} and letting $k \to \infty$ in \eqref{eq.thu.5} we now find that
\begin{equation*}
\left| \calH^m_\nu(A) - \int_{\rmHom_m(X,X)} \chi_n(\pi) \calH^m_\nu(\pi(A))d\mu(\pi) \right| \leq \veps_n \,.
\end{equation*}
Letting $n \to \infty$ our conclusion becomes a consequence of the monotone convergence theorem.
\end{proof}

\begin{Theorem}
\label{48}
Let $(X,\|\cdot\|)$ be a finite dimensional normed space. Every $W \in \bG_2(X)$ admits a density contractor with respect to $\calH^2_{\|\cdot\|}$.
\end{Theorem}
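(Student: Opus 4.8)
The plan is to reduce to the crystalline case, where Burago--Ivanov's Theorem~\ref{45} is directly applicable, and then to lift the statement to arbitrary norms through the limiting device of Proposition~\ref{47}.

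\emph{The crystalline case.} First I would treat a crystalline norm $\|\cdot\|$. Then $W \cap B_{\|\cdot\|}$ is a centrally symmetric convex polygon, and I would apply Theorem~\ref{45} with $u_1,\dots,u_{2p}$ its vertices listed cyclically, obtaining supporting functionals $\alpha_1,\dots,\alpha_p \in X^*$ and weights $\lambda_1,\dots,\lambda_p > 0$. Since the triangles $\rmconv\{0,u_i,u_{i+1}\}$ tile $W \cap B_{\|\cdot\|}$ and antipodal ones have equal Euclidean area, $\sum_{i=1}^p \lambda_i = 1$, hence $\sum_{1 \leq i < j \leq p} \lambda_i \lambda_j < \tfrac12$. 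Fixing an orthonormal basis $w_1,w_2$ of $W$ and the constant $\kappa = \calH^2_{|\cdot|}(W \cap B_{\|\cdot\|})^{1/2}$ --- so that $\beta(\|\cdot\|,|\cdot|,W)\,\kappa^2 = \balpha(2)$ by \eqref{eq.busemann.1} --- I would set $\pi_{ij}(x) = \kappa\big(\alpha_i(x) w_1 + \alpha_j(x) w_2 \big) \in \rmHom(X,W)$ for $i<j$ and define
\begin{equation*}
\mu = \sum_{1 \leq i < j \leq p} \lambda_i \lambda_j\, \bdelta_{\pi_{ij}} + \Big( 1 - \sum_{1 \leq i < j \leq p} \lambda_i \lambda_j \Big)\, \bdelta_0 \,,
\end{equation*}
a Borel probability measure on $\rmHom_2(X,X)$ supported in $\rmHom(X,W)$. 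If $V \in \bG_2(X)$ has orthonormal basis $v_1,v_2$ and $A \subset V$ is Borel, then $\pi_{ij}(A) \subset W$ with the Euclidean $2$-Jacobian of $\pi_{ij}|_V$ equal to $\kappa^2 |\la \alpha_i \wedge \alpha_j, v_1 \wedge v_2 \ra|$, while $\bdelta_0$ contributes nothing since $\rmrank 0 = 0$; so, using $\calH^2_{\|\cdot\|} = \beta(\|\cdot\|,|\cdot|,U)\calH^2_{|\cdot|}$ on each $U \in \bG_2(X)$,
\begin{equation*}
\int \calH^2_{\|\cdot\|}(\pi(A))\, d\mu(\pi) = \balpha(2) \sum_{i < j} \lambda_i \lambda_j\, |\la \alpha_i \wedge \alpha_j, v_1 \wedge v_2 \ra|\, \calH^2_{|\cdot|}(A) \leq \beta(\|\cdot\|,|\cdot|,V)\, \calH^2_{|\cdot|}(A) = \calH^2_{\|\cdot\|}(A)
\end{equation*}
by the middle inequality of Theorem~\ref{45}, with equality when $V = W$. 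Thus $\mu$ is a density contractor on $W$ with respect to $\calH^2_{\|\cdot\|}$.

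\emph{The general case.} For an arbitrary norm $\|\cdot\|$ I would approximate the compact body $B_{\|\cdot\|}$ from inside by polytopes: for $k \geq 2$ pick $B_k$ with $(1-1/k) B_{\|\cdot\|} \subset B_k \subset B_{\|\cdot\|}$ and let $\nu_k$ be the crystalline norm with $B_{\nu_k} = B_k$, so that $\delta(\nu_k,\|\cdot\|) \leq (1-1/k)^{-1} \to 1$ and hence $\nu_k \to \|\cdot\|$ in the sense of \ref{b.m}. The previous paragraph supplies density contractors $\mu_k$ on $W$ with respect to $\calH^2_{\nu_k}$, given by the same formula with data $\kappa_k,\alpha^k_i,\lambda^k_i$ relative to $\nu_k$. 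The key observation is that all these measures are supported in one fixed compact subset of $\rmHom_2(X,X)$: $\kappa_k = \calH^2_{|\cdot|}(W \cap B_{\nu_k})^{1/2} \leq \calH^2_{|\cdot|}(W \cap B_{\|\cdot\|})^{1/2}$ is bounded, and since $B_{\nu_k} \supset \tfrac12 B_{\|\cdot\|} \supset c B_{|\cdot|}$ for a fixed $c > 0$, every supporting functional $\alpha^k_i$ of $B_{\nu_k}$ has $\vvvert \alpha^k_i \vvvert \leq c^{-1}$, so $\vvvert \pi^k_{ij} \vvvert \leq \sqrt{2}\, c^{-1} \kappa_k$ is bounded uniformly in $k$. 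Hence there is $N_0$ with $\mu_k\big(\{\pi : \vvvert \pi \vvvert \geq N_0\}\big) = 0$ for every $k$, which makes hypothesis (3) of Proposition~\ref{47} immediate and hypothesis (4) trivially true (with $A = W \cap B_{|\cdot|}$). Since hypotheses (1) and (2) have just been verified, Proposition~\ref{47} delivers a density contractor on $W$ with respect to $\calH^2_{\|\cdot\|}$, as desired.

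\emph{Where the work is.} I expect the main obstacle to be the normalization in the crystalline step. The Burago--Ivanov data carry no built-in probabilistic normalization ($\sum_{i<j}\lambda_i\lambda_j$ is merely some number $< \tfrac12$), so the candidate maps must be rescaled by the explicit factor $\kappa$ satisfying $\beta(\|\cdot\|,|\cdot|,W)\kappa^2 = \balpha(2)$, and the missing mass parked at the zero map --- which is invisible to $\calH^2$ and therefore spoils neither the inequality nor the equality at $V = W$. The second, related, delicate point is that for the passage to the limit this rescaling must be carried out with bounds uniform in $k$, and this is exactly what the sandwich $(1-1/k)B_{\|\cdot\|} \subset B_{\nu_k} \subset B_{\|\cdot\|}$ buys. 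The remaining ingredients --- the identity $\sum_i \lambda_i = 1$, the linear Jacobian computation, and the check that $\mu$ is a probability measure --- are routine.
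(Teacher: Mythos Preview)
Your proof is correct and follows the same two-step plan as the paper (crystalline case first, then pass to the limit via Proposition~\ref{47}), but your crystalline construction is genuinely simpler, and that simplification propagates to the general case.

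The paper defines $\tilde\pi_{i,j}(x)=\alpha_i(x)u_i+\alpha_j(x)u_j$, landing on the edge directions $u_i,u_j$ themselves; the Euclidean Jacobian of $\tilde\pi_{i,j}|_V$ then carries an unwanted factor $|u_i\wedge u_j|_2$, which the paper removes by rescaling $\pi_{i,j}=(\rho/\sqrt{|u_i\wedge u_j|_2})\,\tilde\pi_{i,j}$. When $u_i$ and $u_j$ are nearly parallel this blows up $\vvvert\pi_{i,j}\vvvert$, so the resulting $\mu$ is not compactly supported. The paper must therefore prove quantitative tail bounds (the extra properties (1) and (2) stated at the start of its crystalline proof) and, in the approximation step, insert auxiliary unit vectors among the $u_i$ so as to keep the ratio $\tau=\max_i|u_i\wedge u_{i+1}|_2/\min_i|u_i\wedge u_{i+1}|_2$ uniformly bounded along the sequence $\nu_k$. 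By contrast, sending $\alpha_i,\alpha_j$ to a fixed orthonormal basis $w_1,w_2$ of $W$ as you do yields the clean Jacobian $\kappa^2|\la\alpha_i\wedge\alpha_j,v_1\wedge v_2\ra|$ directly, with $\psi(W)\kappa^2=\balpha(2)$ giving exactly the constant in Theorem~\ref{45}; the leftover mass $1-\sum_{i<j}\lambda_i\lambda_j$ is parked at $0\in\rmHom(X,W)$, which is invisible to $\calH^2$. Your maps satisfy $\vvvert\pi_{ij}\vvvert\leq\sqrt2\,\kappa\max_i\vvvert\alpha_i\vvvert$, and the sandwich $(1-1/k)B_{\|\cdot\|}\subset B_{\nu_k}\subset B_{\|\cdot\|}$ bounds both factors uniformly in $k$. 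Hence all the $\mu_k$ are supported in one fixed compact subset of $\rmHom_2(X,X)$, making hypotheses (3) and (4) of Proposition~\ref{47} trivial --- no tail estimates, no auxiliary vertices, no control of $\tau$. Nothing is lost: the paper's more elaborate construction does not deliver any additional conclusion needed here.
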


We start with the case when $\|\cdot\|$ is crystalline. For use in the proof we introduce the notation $\rmsquare(v_1,v_2) = X \cap \{ t_1v_1 + t_2v_2 : 0 \leq t_i \leq 1, i=1,2 \}$ to refer to the square with two edges $v_1,v_2$, when these constitute an orthonormal family in $X$.

\begin{proof}[Proof of \ref{48} in case $\|\cdot\|$ is crystalline]
In this case we shall prove that the density contractor $\mu$ can be chosen to verify the following two additional requirements. Here $\Gamma = \delta(\|\cdot\|,|\cdot|)$ and 
\begin{equation*}
\tau = \frac{\max_{i=1,\ldots,p}|u_i \wedge u_{i+1}|_2}{\min_{i=1,\ldots,p}|u_i \wedge u_{i+1}|_2} \,.
\end{equation*}
{\it 
\begin{enumerate}
\item[(1)] For every $n=1,2,\ldots$ one has
\begin{equation*}
\mu \left( \rmHom(X,W) \cap \{ \pi : \vvvert \pi \vvvert \geq n \} \right) \leq \frac{4\Gamma^4(2+\tau)}{n^2} \,;
\end{equation*}
\item[(2)] For every $n=1,2,\ldots$ and every orthonormal basis $v_1,v_2$ of $W$ one has
\begin{equation*}
\int_{\rmHom(X,W) \cap \{ \pi : \vvvert \pi \vvvert \geq n \}} \calH^2_{\|\cdot\|}(\pi(\rmsquare(v_1,v_2)))d\mu(\pi)  \leq \frac{\balpha(2)\Gamma^5(2+\tau)}{n^2} \,.
\end{equation*}
\end{enumerate}
}
We let $u_1,\ldots,u_{2p}$, $\lambda_1,\ldots,\lambda_p$ and $\alpha_1,\ldots,\alpha_p$ be defined as in \ref{45}. For use in the definition of $\mu$ we define certain $\tilde{\pi}_{i,j} \in \rmHom(X,W)$, $i,j=1,\ldots,p$ with $i \neq j$, as follows. 
\begin{equation*}
\tilde{\pi}_{i,j}(x) = \alpha_i(x)u_i + \alpha_j(x)u_j \,,
\end{equation*}
$x \in X$. We notice that
\begin{equation}
\label{eq.thu.6}
\vvvert \tilde{\pi}_{i,j} \vvvert \leq 2 \Gamma^2 \,.
\end{equation}
Furthermore if $v_1,v_2$ is an orthonormal family in $X$ then $\tilde{\pi}_{i,j}(\rmsquare(v_1,v_2))$ is a parallelogram in $W$ with sides $\tilde{\pi}_{i,j}(v_1)$ and $\tilde{\pi}_{i,j}(v_2)$, thus
\begin{multline}
\label{eq.thu.7}
\calH^2_{|\cdot|}\left( \tilde{\pi}_{i,j}(\rmsquare(v_1,v_2)) \right) = \left| \tilde{\pi}_{i,j}(v_1) \wedge \tilde{\pi}_{i,j}(v_2) \right|_2 \\ = \left| \det \left( \begin{matrix}  \alpha_i(v_1) & \alpha_j(v_1) \\ \alpha_i(v_2) & \alpha_j(v_2) \end{matrix}\right) \right|. \left| u_i \wedge u_j \right|_2 \,.
\end{multline}
We now normalize the $\tilde{\pi}_{i,j}$. We define
\begin{equation*}
\rho = \sqrt{\frac{\balpha(2)}{2\psi(W)}} \,,
\end{equation*}
and
\begin{equation*}
\pi_{i,j} = \left( \frac{\rho}{\sqrt{|u_i \wedge u_j|_2}}\right) \tilde{\pi}_{i,j}
\end{equation*}
in case $i \neq j$, and $\pi_{i,i} = 0$, $i=1,\ldots,p$. It clearly follows from \eqref{eq.thu.6} and \eqref{eq.thu.7} that
\begin{equation}
\label{eq.thu.8}
\vvvert \pi_{i,j} \vvvert \leq \frac{2 \Gamma^2\rho}{\sqrt{|u_i \wedge u_j|_2}} \,,
\end{equation}
and
\begin{equation}
\label{eq.thu.9}
\calH^2_{|\cdot|}\left( \pi_{i,j}(\rmsquare(v_1,v_2)) \right) = \rho^2\left| \det \left( \begin{matrix}  \alpha_i(v_1) & \alpha_j(v_1) \\ \alpha_i(v_2) & \alpha_j(v_2) \end{matrix}\right) \right| \,.
\end{equation}
We are now ready to define the Borel measure $\mu$ on $\rmHom(X,W)$:
\begin{equation}
\mu = \sum_{i,j=1}^p \lambda_i \lambda_j \bdelta_{\pi_{i,j}} \,.
\end{equation}
From the definition of the $\lambda_i$ we infer that $\sum_{i=1}^p \lambda_i = 1$ and therefore $\mu$ is readily a probability measure.

In order to show that $\mu$ is a density contractor on $W$ with respect to $\calH^2_{\|\cdot\|}$ we will apply \ref{46}(3). Given $V \in \bG_2(X)$ we choose $v_1,v_2$ an orthonormal basis of $V$ and we let $A = \rmsquare(v_1,v_2)$. We observe that
\begin{equation*}
\begin{split}
\int_{\rmHom(X,W)} \calH^2_{|\cdot|}(\pi(A))d\mu(\pi) & = \sum_{i,j=1}^p \lambda_i \lambda_j \calH^2_{|\cdot|}(\pi_{i,j}(A)) \\
& = 2 \sum_{1 \leq i < j \leq p} \lambda_i \lambda_j \rho^2 \left| \det \left( \begin{matrix}  \alpha_i(v_1) & \alpha_j(v_1) \\ \alpha_i(v_2) & \alpha_j(v_2) \end{matrix}\right) \right| \\
& = 2 \rho^2 \sum_{1 \leq i < j \leq p} \lambda_i \lambda_j \left| \la \alpha_i \wedge \alpha_j , v_1 \wedge v_2 \ra \right| \\
& \leq 2 \rho^2 \balpha(2)^{-1} \psi(V) \\
& = \frac{\psi(V)}{\psi(W)} \\
& = \frac{\psi(V)}{\psi(W)} \calH^2_{|\cdot|}(A) \\
\end{split}
\end{equation*}
where the inequality follows from \ref{45} and specializes to an equality in case $V=W$. This completes the proof that $\mu$ is a density contractor on $W$.
\par 
We now turn to establishing the extra properties (1) and (2) stated at the beginning of the proof. Given $n=1,2,\ldots$ we let 
\begin{equation*}
\Pi_n = \rmHom(X,W) \cap \{ \pi : \vvvert \pi \vvvert \geq n \} \,.
\end{equation*}
If $i,j = 1,\ldots,p$ and $\pi_{i,j} \in \Pi_n$ then
\begin{equation*}
| u_i \wedge u_j |_2 \leq \frac{4\Gamma^4 \rho^2}{n^2}
\end{equation*}
according to \eqref{eq.thu.8}. With $i=1,\ldots,p$ we associate $J^+_{n,i} = \{1,\ldots,p\} \cap \{ j : i < j \text{ and } \pi_{i,j} \in \Pi_n \}$. Assuming that $J^+_{n,i} \neq \emptyset$ we define $j^*_i = \max J^+_{n,i}$ and we infer from elementary geometric reasoning that
\begin{multline*}
\bigcup_{j \in J^+_{n,i}} \rmconv(0,u_{j-1},u_j) \subset W \cap B_{\|\cdot\|} \cap \{ s u_i + t u_{j_i^*} : 0 \leq s \text{ and } 0 \leq  t \} \\ \subset X \cap  \{ s u_i + t u_{j_i^*} : 0 \leq s \leq  1 \text{ and } 0 \leq t \leq 1 \}
\end{multline*}
and therefore
\begin{equation*}
\begin{split}
\sum_{j \in J^+_{n,i}} \lambda_j & = \frac{2}{\calH^2_{|\cdot|}\left(B_{\|\cdot\|}\cap W\right)}\sum_{j \in J^+_{n,i}} \calH^2_{|\cdot|}(\rmconv(0,u_j,u_{j+1})) \\
& \leq \frac{1}{\calH^2_{|\cdot|}\left(B_{\|\cdot\|}\cap W\right)}  \left( \left|u_i \wedge u_{j_i^*}\right|_2 +\left|u_{j_i^*} \wedge u_{j_i^*+1}\right|_2\right) \\
& \leq \left( \frac{4\Gamma^4\rho^2(1+\tau)}{\calH^2_{|\cdot|}\left(B_{\|\cdot\|}\cap W\right)} \right) \frac{1}{n^2} \\
& = \frac{4 \Gamma^4(1+\tau)}{n^2} \,.
\end{split}
\end{equation*}
Similarly we let $J^-_{n,i} = \{1,\ldots,p\} \cap \{ j : j < i \text{ and } \pi_{i,j} \in \Pi_n \}$. Assuming that $J^-_{n,i} \neq \emptyset$ we define $j^i_* = \min J^+_{n,i}$ and reasoning analogously we obtain the slightly better
\begin{equation*}
\sum_{j \in J^-_{n,i}} \lambda_j \leq \frac{1}{\calH^2_{|\cdot|}\left(B_{\|\cdot\|}\cap W\right)} \left| u_{j^i_*} \wedge u_i \right|_2 \leq  \frac{4 \Gamma^4}{n^2} \,.
\end{equation*}
Consequently,
\begin{equation}
\label{eq.thu.10}
\mu \left( \Pi_n \right) = \sum_{\substack{i,j=1 \\ \pi_{i,j} \in \Pi_n}}^p \lambda_i \lambda_j = \sum_{i=1}^p \lambda_i \sum_{j \in J^-_{n,i} \cup J^+_{n,i}} \lambda_j \leq \frac{4\Gamma^4(2+\tau)}{n^2}
\end{equation}
and the proof of (1) is complete.
\par 
Finally we observe that for all $i=1,\ldots,p$ and $l=1,2$ one has $|\alpha_i(v_l)| \leq \|\alpha_i\|^*\|v_l\| \leq \Gamma$ and thus $\calH_{|\cdot|}^2(\pi_{i,j}(\rmsquare(v_1,v_2))) \leq 2 \Gamma \rho^2$, according to \eqref{eq.thu.9}. It therefore follows from \eqref{eq.thu.10} that
\begin{multline*}
\int_{\Pi_n} \calH_{\|\cdot\|}^2(\pi(\rmsquare(v_1,v_2))) d\mu(\pi) = \psi(W) \sum_{\substack{i,j=1 \\ \pi_{i,j} \in \Pi_n}}^p \lambda_i \lambda_j \calH_{|\cdot|}^2(\pi_{i,j}(\rmsquare(v_1,v_2))) \\
\leq \psi(W) 2 \Gamma \rho^2 \mu(\Pi_\lambda) = \frac{\balpha(2)\Gamma^5(2+\tau)}{n^2}
\end{multline*}
and the proof of (2) is complete.
\end{proof}

\begin{proof}[Proof of \ref{48} in the general case]
Fix $W \in \bG_2(X)$.
We start by choosing a sequence of crystalline norms $\nu_1,\nu_2,\ldots$ such that $\nu_k \to \|\cdot\|$ as $k \to \infty$. This is done classically by choosing a finite $k^{-1}$-net $F_k \subset X \cap \{ x : \|x\| = 1\}$ and letting $B_{\nu_k} = \rmconv ( F_k \cup (-F_k))$. Now with each $\nu_k$ we will associate a density contractor $\mu_k$ on $W$ with respect to $\calH^2_{\nu_k}$ as in the first part of the proof, choosing the unit vectors $u^k_1,\ldots,u^k_{2p_k}$ (recall the statement of \ref{45}) in order that all the $|u^k_i \wedge u^k_{i+1}|_2$ are nearly the same value. This is where we may have to add unit vectors to the list of vertices $v^k_1,\ldots,v^k_{2q_k}$ of the polygon $W \cap B_{\nu_k}$. It can easily be done for the following reason: The formula $d_k(u,v) = |u \wedge v |_2$ defines a distance of unit vectors $u$, $v$ lying in << between >> $v_1^k$ and $v^k_{q_k}$ such that each << segment >> $\blseg u , v \brseg$ on the corresponding << half unit circle >> can be partitioned into two segments $\blseg u , w \brseg$ and $\blseg w , v \brseg$ of same << length >>; iterating this process we can readily achieve $\tau_k \leq 2$.

Now since $\nu_k \to \|\cdot\|$ as $k \to \infty$ we infer that $\sup_{k=1,2,\ldots} \Gamma_k < \infty$. It therefore follows from the estimates (1) and (2) proved about $\mu_k$ in the first part of this proof that the sequence $\mu_1,\mu_2,\ldots$ verifies the hypotheses of \ref{47}. The conclusion follows.
\end{proof}

\begin{Theorem}
\label{existence.d.c}
Let $(X,\|\cdot\|)$ be a finite dimensional normed space and $m \in \{1,\ldots,\dim X -1 \}$. It follows that every $W \in \bG(X)$ admits a density contractor with respect to $\calH^m_{\|\cdot\|}$ when either $m=1$ or $m=2$ or $m=\dim X -1$.
\end{Theorem}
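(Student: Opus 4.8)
The plan is to assemble the theorem from the three cases already treated in the preceding sections, the only real content being the verification that in the two "projector" cases a Dirac mass at the relevant projector satisfies both clauses of the definition of a density contractor.

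First I would dispose of the case $m=1$. By \ref{rem-dc}(2) there exists a projector $\pi : X \to W$, i.e.\ $\rmim \pi = W$ and $\pi|_W = \rmid_W$, with $\rmLip_{\|\cdot\|} \pi = 1$. I would then observe that $\bdelta_\pi$ is a Borel probability measure on $\rmHom_m(X,X)$, supported in $\rmHom(X,W)$ since $\rmim \pi = W$, so clause (1) holds. For clause (2), given $V \in \bG_1(X)$ and a Borel $A \subset V$ one has $\int \calH^1_{\|\cdot\|}(\pi'(A))\,d\bdelta_\pi(\pi') = \calH^1_{\|\cdot\|}(\pi(A)) \leq (\rmLip_{\|\cdot\|}\pi)^1 \calH^1_{\|\cdot\|}(A) = \calH^1_{\|\cdot\|}(A)$ by the elementary estimate recalled at the start of \ref{rem-dc}; and when $V = W$ we have $A \subset W$, hence $\pi(A) = A$ because $\pi|_W = \rmid_W$, so the inequality is an equality. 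Thus $\bdelta_\pi$ is a density contractor on $W$.

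Next, the case $m = \dim X - 1$ is handled the same way, now invoking Busemann's theorem \ref{44} in place of the Lipschitz estimate: it furnishes a projector $\pi$ onto $W$ with $\calH^m_{\|\cdot\|}(\pi(A)) \leq \calH^m_{\|\cdot\|}(A)$ for every $V \in \bG_m(X)$ and every Borel $A \subset V$. Exactly as above, $\bdelta_\pi$ is supported in $\rmHom(X,W)$, clause (2)'s inequality is Busemann's conclusion, and equality for $V = W$ follows from $\pi|_W = \rmid_W$. Finally, the case $m = 2$ is precisely the content of \ref{48}, which has already been proved. Since $m$ must be one of these three values by hypothesis, the theorem follows.

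There is no substantial obstacle here: the theorem is a corollary of \ref{rem-dc}, \ref{44} and \ref{48}, all of which are available. The only point requiring a word of care is confirming that a single projector $\pi$ with the stated area (or length) non-increase property yields the \emph{equality} required in clause (2) when $V = W$, which is immediate from $\pi$ being a projector \emph{onto} $W$. If anything deserves emphasis it is that the genuinely hard input, the existence of the calibrating $2$-form and hence of the measure $\mu$ in the non-crystalline case, was already discharged in the proof of \ref{48} via \ref{45} and the approximation argument of \ref{47}.
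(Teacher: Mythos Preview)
Your proof is correct and follows essentially the same approach as the paper's own proof: both handle $m=1$ via the Hahn--Banach projector of \ref{rem-dc}(2), $m=\dim X-1$ via Busemann's theorem \ref{44}, and $m=2$ by invoking \ref{48}, with $\bdelta_\pi$ serving as the density contractor in the first two cases. You spell out the verification of clause (2) of the definition (including the equality when $V=W$) more explicitly than the paper does, but there is no substantive difference.
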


\begin{proof}
The case $m=1$ follows from Hahn's theorem as in \ref{rem-dc}(2) and the case $m=\dim X -1$ follows from Busemann's theorem \ref{44}. In both cases a density contractor on $W$ is given by $\mu = \bdelta_\pi$ where $\pi : X \to W$ is a projector that contracts $\calH^m_{\|\cdot\|}$. The case $m=2$ is \ref{48} above, a consequence of Burago-Ivanov's Theorem \ref{45} and the density contractor exhibited is not of the simple type $\bdelta_\pi$.
\end{proof}

\begin{Theorem}
\label{411}
Let $(X,\|\cdot\|)$ be a finite dimensional normed space, let $(G,\lno\cdot\rno)$ be a complete normed Abelian group and let $m \in \{1,\ldots,\dim X - 1 \}$. Assume that $W \in \bG_m(X)$, $\mu$ is a density contractor on $W$ and $T \in \calR_m(X,G)$. It follows that
\begin{equation*}
\int_{\rmHom(X,W) \times W} \calM(\la T , \pi , y \ra) d\left( \mu \otimes \calH^m_{\|\cdot\|} \right)(\pi,y) \leq \calM_H(T)
\end{equation*}
and the above integrand is $\mu \otimes \calH^m_{\|\cdot\|}$  measurable. Furthermore
\begin{equation*}
\int_{\rmHom(X,W)} \calM_H(\pi_\# T) d\mu(\pi) \leq \calM_H(T)
\end{equation*}
with equality when $\rmspt T \subset W$, and the above integrand is Borel measurable.
\end{Theorem}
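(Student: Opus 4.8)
The plan is to derive both inequalities of \ref{411} from a single coarea estimate for the $0$ dimensional slices of $T$, and then to average that estimate against $\mu$ by means of clause~(2) of the definition of a density contractor. Write $\|T\| = \lno \bg \rno \calH^m_{\|\cdot\|} \restr A$ with $A = \rmset_m \|T\|$, and for $\calH^m_{\|\cdot\|}$ almost every $x \in A$ denote by $V_x = \rmTan(A,x) \in \bG_m(X)$ the approximate tangent plane, so that $\calM_H(T) = \int_A \lno \bg(x) \rno \, d\calH^m_{\|\cdot\|}(x)$. Fixing a $|\cdot|$ linear isometry $\iota$ of $W$ onto $\ell_2^m$, the iterated slices $\la T,\pi,y\ra \in \calR_0(X,G)$ are defined for $\calH^m_{\|\cdot\|}$ almost every $y \in W$ and every $\pi \in \rmHom(X,W)$, with $\la T,\pi,y\ra = \sum_{x \in A \cap \pi^{-1}(y)} \varepsilon(x) \bdelta_x$ where $\varepsilon(x) = \pm \bg(x)$ is prescribed by the orientations. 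Applying the coarea formula for rectifiable $G$ chains \cite{DEP.HAR.07} to $T$ and $\iota \circ \pi$, and then re-expressing Lebesgue measure on $\ell_2^m$ and Euclidean $m$ dimensional measure on $A$ in terms of $\calH^m_{\|\cdot\|}$ through the Busemann--Hausdorff density $\psi$, one obtains
\begin{equation*}
\int_W \calM(\la T,\pi,y\ra)\, d\calH^m_{\|\cdot\|}(y) = \int_A \lno \bg(x) \rno\, \frac{\calH^m_{\|\cdot\|}\big(\pi(V_x \cap B_{|\cdot|})\big)}{\calH^m_{\|\cdot\|}(V_x \cap B_{|\cdot|})}\, d\calH^m_{\|\cdot\|}(x) =: g(\pi) ,
\end{equation*}
the three factors of $\psi$ that enter — $\psi(W)$ from $W$, $\psi(V_x)^{-1}$ from $A$, and $\psi(V_x)/\psi(W)$ inside the Euclidean Jacobian $\calH^m_{|\cdot|}(\pi(E))/\calH^m_{|\cdot|}(E)$ of $\pi|_{V_x}$ — cancelling to $1$. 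Only the inequality ``$\leq$'' in this identity will actually be used.

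From here I would extract two facts. First, since pushing a $0$ chain forward along $\pi$ can only decrease its mass (triangle inequality in $G$), $\calM(\pi_\# \la T,\pi,y\ra) \leq \calM(\la T,\pi,y\ra)$ for almost every $y$; and since $\pi_\# T$ is a rectifiable $m$ chain carried by the $m$ plane $W$ it is the $L_1$ representation of a $G$ valued function whose $0$ dimensional slices by the identity of $W$ are exactly the $\pi_\# \la T,\pi,y\ra$, whence $\calM_H(\pi_\# T) = \int_W \calM(\pi_\# \la T,\pi,y\ra)\, d\calH^m_{\|\cdot\|}(y) \leq g(\pi)$. Second, integrating $g$ against $\mu$ — the integrand $\pi \mapsto \calH^m_{\|\cdot\|}(\pi(V_x \cap B_{|\cdot|}))$ being continuous by \ref{41}(2) and $x \mapsto V_x$ Borel, so that Tonelli applies —
\begin{equation*}
\int_{\rmHom(X,W)} g(\pi)\, d\mu(\pi) = \int_A \lno \bg(x) \rno \left( \int_{\rmHom(X,W)} \frac{\calH^m_{\|\cdot\|}(\pi(V_x \cap B_{|\cdot|}))}{\calH^m_{\|\cdot\|}(V_x \cap B_{|\cdot|})}\, d\mu(\pi) \right) d\calH^m_{\|\cdot\|}(x) \leq \calM_H(T) ,
\end{equation*}
because the inner integral is $\leq 1$ by clause~(2) of the definition, applied with $V = V_x$ and the Borel set $V_x \cap B_{|\cdot|}$, which has positive finite $\calH^m_{\|\cdot\|}$ measure. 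The two asserted inequalities then follow: $\int \calM_H(\pi_\# T)\, d\mu \leq \int g\, d\mu \leq \calM_H(T)$, and $\int_{\rmHom(X,W) \times W} \calM(\la T,\pi,y\ra)\, d(\mu \otimes \calH^m_{\|\cdot\|}) = \int_{\rmHom(X,W)} g\, d\mu \leq \calM_H(T)$ by Tonelli.

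For the equality statement assume $\rmspt T \subset W$, so that $V_x = W$ for $\calH^m_{\|\cdot\|}$ almost every $x$. For $\pi \in \rmHom(X,W)$: if $\pi|_W$ is invertible then $\pi_\# T = (\pi|_W)_\# T$ is the $L_1$ representation of $T$ transported by the linear automorphism $\pi|_W$ of $W$, whose constant Jacobian with respect to $\calH^m_{\|\cdot\|} \restr W$ equals $\calH^m_{\|\cdot\|}(\pi(W \cap B_{|\cdot|}))/\calH^m_{\|\cdot\|}(W \cap B_{|\cdot|})$; if $\pi|_W$ is not invertible then $\pi_\# T = 0$ and this ratio vanishes. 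So in every case $\calM_H(\pi_\# T) = \calM_H(T)\, \calH^m_{\|\cdot\|}(\pi(W \cap B_{|\cdot|}))/\calH^m_{\|\cdot\|}(W \cap B_{|\cdot|})$, and integrating against $\mu$ while invoking the equality case ($V = W$) of clause~(2) with the set $W \cap B_{|\cdot|}$ yields $\int \calM_H(\pi_\# T)\, d\mu = \calM_H(T)$. As for measurability: $(\pi,y) \mapsto \calM(\la T,\pi,y\ra)$ is $\mu \otimes \calH^m_{\|\cdot\|}$ measurable by the measurable dependence of the slice on $(\pi,y)$ (reduce to polyhedral $T$ via \ref{approximation} and \cite[Theorem 4.2]{DEP.13.approx}, or see \cite{DEP.HAR.07}); and because $\pi_\# T$ is carried by $W$, $\calM_{H,\|\cdot\|}(\pi_\# T) = \psi(W)\, \calM_{H,|\cdot|}(\pi_\# T)$, where $\pi \mapsto \pi_\# T$ is $\calF$ continuous into $\calR_m(X,G)$ (approximate $T$ in Hausdorff mass by polyhedral chains and use the affine homotopy formula) and $\calM_{H,|\cdot|}$ is $\calF$ lower semicontinuous, so $\pi \mapsto \calM_H(\pi_\# T)$ is lower semicontinuous, hence Borel.

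The step that will demand the most care is the coarea identity of the first paragraph: one must keep track of the normalizations of $\calH^m_{\|\cdot\|}$ on $W$ and on $A$ so that the three copies of the Busemann--Hausdorff density cancel and $g(\pi)$ emerges in precisely the form on which clause~(2) of the definition of a density contractor can be used; the measurability assertions, although elementary, are the other point that needs attention.
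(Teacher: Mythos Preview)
Your proof is correct and takes a genuinely different route from the paper's. The paper argues by polyhedral approximation: it first establishes both inequalities for $P \in \calP_m(X,G)$ using an explicit simplex decomposition $P = \sum_k g_k \lseg \sigma_k \rseg$, where the slice formula $\calM(\la P,\pi,y\ra) = \sum_k \lno g_k\rno \ind_{\pi(\sigma_k)}(y)$ and the density contractor bound $\int \calH^m_{\|\cdot\|}(\pi(\sigma_k))\,d\mu \leq \calH^m_{\|\cdot\|}(\sigma_k)$ make the computation immediate; it then passes to general $T \in \calR_m(X,G)$ via \cite[Theorem 4.2]{DEP.13.approx}, using rapid $\calF$ convergence of the approximating polyhedra together with Fatou (for the slice inequality) and the identity $\calM_H = \psi(W)\calF$ on $m$ chains in $W$ together with dominated convergence (for the push-forward inequality and its equality case).

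You instead work directly on the rectifiable chain via the area formula: you express the slice integral as $g(\pi) = \int_A \lno\bg(x)\rno \,\calH^m_{\|\cdot\|}(\pi(V_x\cap B_{|\cdot|}))/\calH^m_{\|\cdot\|}(V_x\cap B_{|\cdot|})\,d\calH^m_{\|\cdot\|}(x)$, then apply clause~(2) of the definition of a density contractor \emph{pointwise in $x$} (with $V=V_x$) after interchanging integrals by Tonelli. This is conceptually cleaner and avoids the approximation machinery, at the cost of invoking the slice representation $\la T,\pi,y\ra = \sum_{x\in A\cap\pi^{-1}(y)} \pm\bg(x)\bdelta_x$ for rectifiable $G$ chains and the joint measurability of $(\pi,x)\mapsto \calH^m_{\|\cdot\|}(\pi(V_x\cap B_{|\cdot|}))$, both of which are available but slightly heavier than the polyhedral computations the paper uses. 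Your measurability argument for $\pi\mapsto\calM_H(\pi_\# T)$ gives lower semicontinuity; the paper observes the sharper fact that this map is actually continuous, since $\calM_H = \psi(W)\calF$ on chains carried by the $m$ plane $W$ (there being no nontrivial $(m{+}1)$ chains in $W$).
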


In the above statement we wrote $\calM(\la T , \pi , y \ra)$ without a subscript $H$. Indeed no reference to the norm $\|\cdot\|$ is needed at all, as both $\calR_0(X,G)$ and the mass $\calM$ defined on it are independent of $\|\cdot\|$. Members of $\calR_0(X,G)$ are $G$ valued atomic Borel measures on $X$, i.e. of the form $\sum_{j \in J} g_j \bdelta_{x_j}$ with the $x_j$ all distinct, and
\begin{equation*}
\calM \left( \sum_{j \in J} g_j \bdelta_{x_j}\right) = \sum_{j \in J} \lno g_j \rno \,.
\end{equation*}

\begin{proof}
We give both $X$ and $W$ a fixed orientation. We start with the first inequality in the case when $T = P \in \calP_m(X,G)$ is polyhedral. We choose a decomposition $P = \sum_{k=1}^\kappa \lseg \sigma_k \rseg$ where the $\sigma_1,\ldots,\sigma_\kappa$ are nonoverlapping. Let $\pi \in \rmHom(X,W)$ be of maximal rank. For almost every $y \in W$ the following holds: For every $k=1,\ldots,\kappa$ either $\sigma_k \cap \pi^{-1}\{y\}$ is a singleton or it is empty. Furthermore, letting $F_y = \left( \cup_{k=1}^\kappa \sigma_k \right) \cap \pi^{-1}\{y\}$ one has
\begin{equation*}
\la P , \pi , y \ra = \sum_{x \in F_y} (-1)^{\veps(\pi,x)}\bdelta_x
\end{equation*}
where $\veps(\pi,x)=\pm 1$ according to whether $\pi|_{W_k} : W_k \to W$ preserves or not the orientation, where $W_k$ is the affine $m$ plane containing $\sigma_k$ and is given the orientation of $\lseg \sigma_k \rseg$. In particular, for those $y$,
\begin{equation*}
\calM( \la P , \pi , y \ra ) = \sum_{k=1}^\kappa \lno g_k \rno \ind_{\pi(\sigma_k)}(y) \,.
\end{equation*} 
The same formula holds if $\pi$ is not of maximal rank; in fact in that case both members of the identity above vanish for almost every $y$.
That this function of $(\pi,y)$ be Borel measurable can be established along the lines of \cite[\S 5.4]{BOU.DEP}. It now follows from Fubini's theorem and the property of density contractor that
\begin{multline*}
\int_{\rmHom(X,W) \times W} \calM(\la P , \pi , y \ra) d\left( \mu \otimes \calH^m_{\|\cdot\|} \right)(\pi,y) \\ = \sum_{k=1}^\kappa \lno g_k \rno\int_{\rmHom(X,W)}  \calH_{\|\cdot\|}^m(\pi(\sigma_k)) d\mu(W) \\
\leq \sum_{k=1}^\kappa \lno g_k \rno \calH_{\|\cdot\|}^m(\sigma_k) = \calM_H(P)  \,.
\end{multline*}
We next assume that $T \in \calR_m(X,G)$ and we choose a sequence $P_1,P_2,\ldots$ in $\calP_m(X,G)$ such that $\lim_k \calF(P_k-T) = 0$ and $\limsup_k \calM_H(P_k) \leq \calM_H(T)$, see \cite[Theorem 4.2]{DEP.13.approx}. Taking a subsequence if necessary we may assume that $P_1,P_2,\ldots$ converges rapidly to $T$. Accordingly it follows from \cite[5.2.1(4)]{DEP.HAR.07} that for every $\pi \in \rmHom(X,W)$ one has, for $\calH^m_{\|\cdot\|}$ almost every $y \in W$, $\calF(\la T,\pi,y \ra - \la P_k,\pi,y \ra) \to 0$ as $k \to \infty$. Since $\calM : \calR_0(X,G) \to \R$ is $\calF$ lower semicontinuous, \cite[4.4.1]{DEP.HAR.07}, we infer at once that $(\pi,y) \mapsto \calM(\la T,\pi,y \ra)$ is $\mu \otimes \calH^m_{\|\cdot\|}$ measurable, and that
\begin{equation*}
\calM(\la T,\pi,y \ra) \leq \liminf_k \calM \left( \la P_k , \pi , y \ra \right) \,.
\end{equation*}
It then follows from Fatou's lemma that
\begin{multline*}
\int_{\rmHom(X,W) \times W} \calM(\la T , \pi , y \ra) d\left( \mu \otimes \calH^m_{\|\cdot\|} \right)(\pi,y)  \\ \leq \liminf_k \int_{\rmHom(X,W) \times W} \calM(\la P_k , \pi , y \ra) d\left( \mu \otimes \calH^m_{\|\cdot\|} \right)(\pi,y)\\
\leq \liminf_k \calM_H(P_k) \leq \calM_H(T)\,.
\end{multline*}
\par 
We now turn to proving the second conclusion. 
As before we start with the case when $T = P \in \calP_m(X,G)$ is polyhedral. The function $\rmHom(X,W) \to \R : \pi \mapsto \calM_H(\pi_\#P)$ is continuous. It is indeed the composition of $\calR_m(W,G) \to \R : S \mapsto \calM_H(S)$ and $\rmHom(X,W) \to \calR_m(X,G) : \pi \mapsto \pi_\#P$, the former being continuous since in fact $\calM_H(S) = \psi(W) \calM_{H,|\cdot|}(S)=\psi(W)\calF(S)$ as $m = \dim W$. The latter is continuous as well according to the homotopy formula $\calF(\pi_\#P - \tilde{\pi}_\#P) \leq \max \{ 1+\vvvert \pi \vvvert,  1+\vvvert \tilde{\pi} \vvvert \}^{m+1} \rmdiam ( \{0\} \cup \rmspt P) \calN(P)$, $\pi, \tilde{\pi} \in \rmHom(X,W)$, see for instance \cite[\S 2.6]{DEP.13.approx}.
We next choose a decomposition $P = \sum_{k=1}^\kappa \lseg \sigma_k \rseg$ where the $\sigma_1,\ldots,\sigma_k$ are nonoverlapping, thus $\calM_H(P) = \sum_{k=1}^\kappa \lno g_k \rno \calH^m_{\|\cdot\|}(\sigma_k)$. Given $\pi \in \rmHom(X,W)$ we notice that $\pi_\# P = \sum_{k=1}^\kappa g_k \lseg \pi(\sigma_k) \rseg$, thus $\calM_H(\pi_\#P) \leq \sum_{k=1}^\kappa \lno g_k \rno \calH^m_{\|\cdot\|}(\pi(\sigma_k))$. Integrating over $\pi$ with respect to $\mu$ we obtain
\begin{multline*}
\int_{\rmHom(X,W)} \calM_H(\pi_\#P) d\mu(\pi) \leq \sum_{k=1}^\kappa \lno g_k \rno \int_{\rmHom(X,W)} \calH^m_{\|\cdot\|}(\pi(\sigma_k))d\mu(\pi) \\ \leq \sum_{k=1}^\kappa \lno g_k \rno \calH^m_{\|\cdot\|}(\sigma_k) = \calM_H(P) \,.
\end{multline*}
In case $\rmspt P \subset W$ we reason as follows. Given $\pi \in \rmHom(X,W)$ we note that $\calM_H(\pi_\# P) = \sum_{k=1}^\kappa \lno g_k \rno \calH^m_{\|\cdot\|}(\pi(\sigma_k))$ because either $\pi$ has rank less than or equal to $m-1$ and both sides are clearly zero, or $\pi$ has rank $m$ and then the simplexes $\pi(\sigma_1),\ldots,\pi(\sigma_\kappa)$ are nonoverlapping. Thus the first inequality above becomes an inequality. The second one as well because each $\sigma_k \subset W$.
We now that $T \in \calR_m(X,G)$ and as we did before, we choose a sequence $P_1,P_2,\ldots$ in $\calP_m(X,G)$ such that $\lim_k \calF(P_k-T) = 0$ and $\limsup_k \calM_H(P_k) \leq \calM_H(T)$, see \cite[Theorem 4.2]{DEP.13.approx}.
 Since $\calF(\pi_\# P_k - \pi_\#T) \leq \max \left\{ \vvvert \pi \vvvert^m,\vvvert \pi \vvvert^{m+1}\right\} \calF(P_k-T) \to 0$ as $k \to \infty$ we infer that $\calM_H(\pi_\#P_k-\pi_\#T) \to 0$ as $k \to \infty$ because $\calM_H = \psi(W) \calF$. In particular $\pi \mapsto \calM_H(\pi_\#T)$ is Borel measurable. It then follows from the dominated convergence theorem that
\begin{multline*}
\int_{\rmHom(X,W)} \calM_H(\pi_\#T) d\mu(\pi) = \lim_k \int_{\rmHom(X,W)} \calM_H(\pi_\#P_k) d\mu(\pi) \\ \leq \limsup_k \calM_H(P_k) \leq \calM_H(T) \,.
\end{multline*}
If $\rmspt T \subset W$ then applying \cite[Theorem 4.2]{DEP.13.approx} in $W$ instead of $X$, or applying \cite[Lemma 3.2]{DEP.HAR.14} we can guarantee that each $\rmspt P_k \subset W$. In that case also $\lim_k \calM_H(P_k - T) =0$ thus both inequalities above become equalities.
\end{proof}

\begin{Theorem}
\label{413}
Let $(X,\|\cdot\|)$ be a finite dimensional normed space and let $m \in \{1,\ldots,\dim X - 1 \}$. Assume that $W \in \bG_m(X)$, $\mu$ is a density contractor on $W$ and $A \subset X$ is Borel measurable and countably $(\calH^m_{\|\cdot\|},m)$ rectifiable. It follows that
\begin{equation*}
\int_{\rmHom_m(X,X)} \calH_{\|\cdot\|}^m(\pi(A)) d\mu(\pi) \leq \calH_{\|\cdot\|}^m(A)
\end{equation*}
with equality when $A \subset W$, and the above integrand is Borel measurable.
\end{Theorem}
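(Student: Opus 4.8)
The plan is to prove the inequality by an area-formula computation that reduces it to the single-plane estimate built into the very definition of a density contractor, applied to (the Euclidean unit balls of) the approximate tangent planes of $A$; the equality case will be immediate from the definition.

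First, the easy points. If $A\subseteq W$ then $A$ is a Borel subset of $W\in\bG_m(X)$, so the asserted equality is exactly the equality clause in the definition of density contractor (condition (2) with $V=W$). Borel measurability of $\pi\mapsto\calH^m_{\|\cdot\|}(\pi(A))$ is \ref{41}(3). Because $\|\cdot\|$ and $|\cdot|$ are equivalent norms, $\calH^m_{\|\cdot\|}$ and $\calH^m_{|\cdot|}$ are comparable on $X$, so $A$ is also countably $(\calH^m_{|\cdot|},m)$ rectifiable and, for the inequality, we may assume $\calH^m_{\|\cdot\|}(A)<\infty$ (hence $\calH^m_{|\cdot|}(A)<\infty$), there being nothing to prove otherwise.

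For the main estimate, note that since $\mu$ is carried by $\rmHom(X,W)$, for $\mu$-a.e.\ $\pi$ one has $\pi(A)\subseteq W$, hence $\calH^m_{\|\cdot\|}(\pi(A))=\psi(W)\calH^m_{|\cdot|}(\pi(A))$, and the classical area formula for the Lipschitz map $\pi$ on the rectifiable set $A$ gives $\calH^m_{|\cdot|}(\pi(A))\leq\int_W\#(A\cap\pi^{-1}\{y\})\,d\calH^m_{|\cdot|}(y)=\int_A J\pi(x)\,d\calH^m_{|\cdot|}(x)$, where $J\pi(x)$ is the Euclidean Jacobian of $\pi|_{V_x}\colon V_x\to W$ with $V_x=\rmTan(A,x)\in\bG_m(X)$; equivalently $J\pi(x)=\calH^m_{|\cdot|}(\pi(V_x\cap B_{|\cdot|}))/\calH^m_{|\cdot|}(V_x\cap B_{|\cdot|})$, whether or not $\pi|_{V_x}$ is injective. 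The map $(\pi,x)\mapsto J\pi(x)$ is jointly measurable (continuity in $\pi$ as in \ref{41}(2), measurability in $x$ through $V_x$), so Tonelli applies. Integrating in $\pi$ against $\mu$ and then, for each fixed $x$, applying the defining inequality of the density contractor to the Borel set $V_x\cap B_{|\cdot|}\subseteq V_x$ — which, after dividing $\int\calH^m_{\|\cdot\|}(\pi(V_x\cap B_{|\cdot|}))\,d\mu\leq\calH^m_{\|\cdot\|}(V_x\cap B_{|\cdot|})$ by $\psi(W)$, reads $\int\calH^m_{|\cdot|}(\pi(V_x\cap B_{|\cdot|}))\,d\mu\leq\tfrac{\psi(V_x)}{\psi(W)}\calH^m_{|\cdot|}(V_x\cap B_{|\cdot|})$ — one finds, after the factors $\psi(W)$ and $\calH^m_{|\cdot|}(V_x\cap B_{|\cdot|})$ cancel, that
\begin{equation*}
\int\calH^m_{\|\cdot\|}(\pi(A))\,d\mu(\pi)\leq\int_A\psi(\rmTan(A,x))\,d\calH^m_{|\cdot|}(x).
\end{equation*}

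It remains to identify the right-hand side with $\calH^m_{\|\cdot\|}(A)$, i.e.\ to use the representation $\calH^m_{\|\cdot\|}\hel A=\psi(\rmTan(A,\cdot))\,\calH^m_{|\cdot|}\hel A$ of the Busemann--Hausdorff measure of a rectifiable set as the integral of its density: the two restricted measures are mutually absolutely continuous, and their Radon--Nikodym derivative at $\calH^m_{|\cdot|}$-a.e.\ $x$ is computed by blow-up at $x$ (where $A$ becomes the flat plane $V_x$) to be $\calH^m_{\|\cdot\|}(V_x\cap B_{|\cdot|})/\calH^m_{|\cdot|}(V_x\cap B_{|\cdot|})=\psi(V_x)$; this is standard rectifiability theory (cf.\ \cite{KIR.94}). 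This last identification, together with the measurability bookkeeping needed for Tonelli, is the only place where I expect any real care to be needed; the remainder is an unwinding of the density contractor axiom and the area formula, and in particular no control of the (possibly large) operator norm of $\pi$ is required, which is what makes this route preferable to an approximation argument as in \ref{411}.
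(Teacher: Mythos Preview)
Your argument is correct and takes a genuinely different route from the paper. The paper proves \ref{413} by invoking \ref{411} with $G=\bZ$: it equips $A$ with a Borel $\bZ$-orientation to build $T\in\calR_m(X,\bZ)$ with $\calM_H(T)=\calH^m_{\|\cdot\|}(A)$, observes that for almost every $y\in\pi(A)$ the slice $\la T,\pi,y\ra$ has mass at least $1$, and then reads off the inequality from the slice-mass estimate of \ref{411}. Your proof instead bypasses the chain machinery entirely: you apply the Euclidean area formula to $\pi|_A$ to bound $\calH^m_{|\cdot|}(\pi(A))$ by $\int_A J\pi\,d\calH^m_{|\cdot|}$, integrate in $\pi$, swap the order by Tonelli, and then apply the density contractor axiom \emph{pointwise} to the tangent plane $V_x$ (via \ref{46}(3)) to bound $\int J\pi(x)\,d\mu$ by $\psi(V_x)/\psi(W)$; the final step is the Kirchheim-type identification $\calH^m_{\|\cdot\|}\hel A=\psi(\rmTan(A,\cdot))\,\calH^m_{|\cdot|}\hel A$.

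What each approach buys: the paper's route is shorter \emph{given} \ref{411}, and it illustrates the theme that set-level statements follow from chain-level ones by taking multiplicity $1$. Your route is more self-contained and conceptually transparent --- it shows that \ref{413} is really just the density-contractor inequality on each tangent plane, averaged over $A$ --- and it avoids any appeal to slicing or to the approximation results feeding into \ref{411}. The only places needing care, as you note, are the joint measurability of $(\pi,x)\mapsto J\pi(x)$ (which follows since it is continuous in $\pi$ for fixed $V_x$ and $x\mapsto V_x$ is Borel, making it a Carath\'eodory function) and the density identity $d\calH^m_{\|\cdot\|}/d\calH^m_{|\cdot|}=\psi(V_x)$ on $A$; both are standard. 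You also handle the equality case $A\subset W$ more explicitly than the paper's proof, which leaves it implicit in the definition.
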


\begin{proof}
The measurability claim follows from \ref{41}(3). In order to prove the inequality
there is of course no restriction to assume $\calH^m_{\|\cdot\|}(A) < \infty$. We will apply \ref{411} with $G = \bZ$. We start by choosing a Borel measurable orientation of the approximate tangent spaces of $A$, say $\xi : A \to \bigwedge_m X$, see \cite[3.2.25]{GMT}. We let $T \in \calR_m(X,G)$ be associated with the set $A$ and the $\bZ$ orientation $[1,\xi(x)]$ at almost every $x \in A$. Thus $\calM_H(T) = \calH_{\|\cdot\|}^m(A)$. Next we recall that for $\calH_{\|\cdot\|}^m$ almost every $y \in W$ 
\begin{equation*}
\la T , \pi ,y \ra = \sum_{x \in A \cap \pi^{-1}\{y\}} (-1)^{\veps(\pi,x)} \bdelta_x \,,
\end{equation*}
where $\veps(\pi,x) = \pm 1$ according to whether the restriction $ \pi : \rmap\rmTan(\calH^m \hel A,x) \to W$ preserves the orientation or not. In particular for $\calH_{\|\cdot\|}^m$ almost every $y \in \pi(A)$ one has $\calM(\la T , \pi , y \ra ) \geq 1$. Our present conclusion then immediately follows from the first conclusion of \ref{411}. 
\end{proof}

\section{New type of Gross measures}

\begin{Empty}[Choice of density contractors]
\label{51}
Given a finite dimensional normed space $(X,\|\cdot\|)$ and an integer $1 \leq m \leq \dim X -1$ we say that {\em $(X,\|\cdot\|)$ admits density contractors of dimension $m$} if for every every $W \in \bG_m(X)$ there exists a density contractor $\mu$ on $W$. Under this assumption the Axiom of Choice guarantees the existence of a choice map 
\begin{equation*}
\bmu : \bG_m(X) \to \calM_1(\rmHom_m(X,X)) \,,
\end{equation*}
i.e. such that $\bmu(W)$ is a density contractor on $W$, for each $W \in \bG_m(X)$. We call such $\bmu$ a {\em choice of density contractors in dimension $m$} and we write $\bmu_W$ instead of $\bmu(W)$. We shall show at the end of this section, \ref{510} that such $\bmu$ can be chosen to be universally measurable, even though this extra property of a choice of density contractors will play no role in the other results presented here.
\end{Empty}

\begin{Empty}
\label{5.assumption}
{\em In the remaining part of this section we assume that $(X,\|\cdot\|)$ admits density contractors of dimension $m$ and we let $W \mapsto \bmu_W$ be a choice of density contractors in dimension $m$.}
\end{Empty}

\begin{Empty}[Gross measure]
\label{53}
With this data ($m$ and $\bmu$) we shall now associate an outer measure on $X$ denoted as $\calG^m_{\bmu}$. To start with, for each Borel subset $A \subset X$ we define
\begin{equation*}
\zeta_{\bmu}(A) = \underset{W \in \bG_m(X)}\sup \int_{\rmHom_m(X,X)} \calH^m_{\|\cdot\|}(\pi(A)) d\bmu_W(\pi) \,.
\end{equation*} 
Now with $A \subset X$ and $0 < \delta \leq \infty$ we associate
\begin{multline*}
\calG^m_{\bmu,\delta}(A) = \inf \Bigg\{ \sum_{j \in J} \zeta_{\bmu}(B_j) : A \subset \cup_{j \in J} B_j \text{ and } \{B_j : j \in J\} \text{ is a} \\
\text{countable family of Borel subsets of $X$ with } \rmdiam B_j < \delta \,, j \in J \Bigg\} \,, 
\end{multline*}
as well as 
\begin{equation*}
\calG^m_{\bmu}(A) = \underset{\delta > 0}\sup\, \calG^m_{\bmu,\delta}(A) \,.
\end{equation*}
All these are outer measures on $X$. Furthermore Borel sets are $\calG^m_{\bmu}$ measurable and $\calG^m_{\bmu}$ is Borel regular. 
\end{Empty}

Our goal is to establish that $\calG^m_{\bmu}(A)=\calH^m_{\|\cdot\|}(A)$ in case $A$ is countably $(\calH^m_{\|\cdot\|},m)$ rectifiable. We start with a trivial observation about rectangular matrices. Recall that $\Lambda(n,m)$ denotes the set of (strictly) increasing maps $\{1,\ldots,m\} \to \{1,\ldots,n\}$ which we may identify with their image. 

\begin{Lemma}
\label{54}
Let $A \in M_{m \times n}(\R)$ and $B,B' \in M_{n \times m}(\R)$ be such that
\begin{equation*}
B = \begin{pmatrix}
I_m \\
0
\end{pmatrix}
\text{ and }
B' = \begin{pmatrix}
I_m \\
E
\end{pmatrix}
\end{equation*}
for some $E \in M_{(n-m) \times m}(\R)$ with $\|E\|_\infty \leq 1$. It follows that
\begin{equation*}
\left| \det(AB) - \det(AB') \right| \leq \bc_{\theTheorem}(n,m) \|E\|_\infty \sum_{\substack{\lambda \in \Lambda(n,m)\\ \lambda \neq \{1,\ldots,m\}}} \left| \det(A_\lambda) \right|
\end{equation*}
were $A_\lambda$ is the square matrix whose $k^{th}$ column coincides with the $\lambda(k)^{th}$ column of $A$.
\end{Lemma}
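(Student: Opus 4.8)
The plan is to expand $\det(AB')$ by multilinearity in its $m$ columns and to compare the outcome, term by term, with $\det(AB)$ --- which will turn out to be exactly the single ``diagonal'' term of that expansion. Write $a_1,\dots,a_n$ for the columns of $A$; then $AB$ and $AB'$ are the $m\times m$ matrices whose $k$-th columns are, respectively,
\[
(AB)_{\cdot k}=\sum_{j=1}^n B_{jk}\,a_j=a_k
\qquad\text{and}\qquad
(AB')_{\cdot k}=\sum_{j=1}^n B'_{jk}\,a_j=a_k+\sum_{r=1}^{n-m}E_{rk}\,a_{m+r},
\]
since $B_{jk}=\delta_{jk}$ for $j\leq m$ and $B_{jk}=0$ for $j>m$, while $B'_{jk}=\delta_{jk}$ for $j\leq m$ and $B'_{m+r,k}=E_{rk}$. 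In particular $AB=A_{\{1,\dots,m\}}$.

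First I would write out the multilinear expansion
\[
\det(AB')=\sum_{\phi}\Big(\prod_{k=1}^m B'_{\phi(k),k}\Big)\det\big[a_{\phi(1)}\mid\dots\mid a_{\phi(m)}\big],
\]
where $\phi$ runs over all maps $\{1,\dots,m\}\to\{1,\dots,n\}$ and $[a_{\phi(1)}\mid\dots\mid a_{\phi(m)}]$ denotes the square matrix with those columns. A term vanishes unless $\phi$ is injective (otherwise two columns coincide) and unless $B'_{\phi(k),k}\neq 0$ for every $k$, that is, $\phi(k)=k$ or $\phi(k)>m$ for every $k$. The only admissible $\phi$ with $\phi(k)\leq m$ for all $k$ is $\phi=\rmid$, whose term equals $\det(A_{\{1,\dots,m\}})=\det(AB)$ and carries coefficient $1$; subtracting it,
\[
\det(AB')-\det(AB)=\sum_{\phi\neq\rmid}\Big(\prod_{k=1}^m B'_{\phi(k),k}\Big)\det\big[a_{\phi(1)}\mid\dots\mid a_{\phi(m)}\big],
\]
the sum now over the remaining admissible $\phi$.

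For each such $\phi$, set $S=\{k:\phi(k)>m\}$, so $S\neq\emptyset$. Then $\prod_{k}B'_{\phi(k),k}=\prod_{k\in S}E_{\phi(k)-m,\,k}$, whose modulus is at most $\|E\|_\infty^{|S|}\leq\|E\|_\infty$ because $|S|\geq 1$ and $\|E\|_\infty\leq 1$; and $\det[a_{\phi(1)}\mid\dots\mid a_{\phi(m)}]=\pm\det(A_\lambda)$, where $\lambda\in\Lambda(n,m)$ is the increasing enumeration of $\rmim\phi$ and $\lambda\neq\{1,\dots,m\}$ since $\rmim\phi$ meets $\{m+1,\dots,n\}$. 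This already yields
\[
|\det(AB')-\det(AB)|\leq\|E\|_\infty\sum_{\phi\neq\rmid}|\det(A_{\lambda(\phi)})|,
\]
and it remains to recast the right-hand side as a sum over $\Lambda(n,m)$. Grouping the $\phi$'s by the associated $\lambda$, one checks that for fixed $\lambda\neq\{1,\dots,m\}$ the set $S$ must consist exactly of those $k\in\{1,\dots,m\}$ not lying in $\rmim\lambda$, that $\phi$ is the identity off $S$, and that $\phi|_S$ can be any of the $|S|!$ bijections of $S$ onto $\rmim\lambda\cap\{m+1,\dots,n\}$; since $|S|\leq m$, at most $m!$ maps $\phi$ give rise to a given $\lambda$. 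Hence the inequality holds with $\bc_{\theTheorem}(n,m)=m!$.

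This is routine linear algebra and I do not expect a genuine obstacle. The only point requiring a little care is the combinatorial bookkeeping in the last step, where one must keep straight the column indices of $A$, the row index of $E$, and $\rmim\phi$ in order to see both that $S$ is determined by $\lambda$ and that each $\lambda\neq\{1,\dots,m\}$ is hit by at most $m!$ admissible maps. One could instead leave $\bc_{\theTheorem}(n,m)$ unspecified, depending only on $n$ and $m$, but the bound $m!$ --- in fact independent of $n$ --- is what the argument naturally produces.
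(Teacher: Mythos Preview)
Your proof is correct and follows essentially the same approach as the paper: both compute the $k$th column of $AB'$ as $a_k+\sum_{r}E_{rk}\,a_{m+r}$ and then appeal to the multilinearity of the determinant. The paper's proof stops at the phrase ``the conclusion now ensues from the multilinearity of the determinant'', while you carry out the expansion explicitly and do the combinatorial bookkeeping, which even yields the concrete constant $\bc(n,m)=m!$ (in fact independent of $n$).
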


\begin{proof}
If $F \subset \{1,\ldots,n\}$ is a nonempty set whose elements are numbered $j_1 < \ldots < j_p$ we let $A_F$ denote the matrix whose $k^{th}$ column is the $j_k^{th}$ column of $A$. We also abbreviate $\lambda_0 = \{1,\ldots,m\}$. It is immediate that $AB = A_{\lambda_0 }$. Furthermore
\begin{equation*}
\begin{split}
AB' & = \begin{pmatrix}
A_{\lambda_0} & \begin{pmatrix} A_{\{n-m+1\}} & \cdots & A_{\{n\}} \end{pmatrix}
\end{pmatrix}
\begin{pmatrix}
I_m \\
E
\end{pmatrix}\\
 & = A_{\lambda_0} + \begin{pmatrix} A_{\{n-m+1\}} & \cdots & A_{\{n\}} \end{pmatrix} E \,.
  \end{split}
\end{equation*}
It follows that the $k^{th}$ column of $AB'$ is
\begin{equation*}
A_{\{k\}} + \sum_{i=n-m+1}^n e_{i,k} A_{\{i\}}
\end{equation*}
where $E=(e_{i,j})$. The conclusion now ensues from the multilinearity of the determinant.
\end{proof}

\begin{Proposition}
\label{55}
Assume \ref{5.assumption}.
Let $M \subset X$ be an $m$ dimensional submanifold of $X$ of class $C^1$, $a \in M$ and $0 < \veps < 1$. There then exists $r_0 > 0$ and $W \in \bG_m(X)$ with the following property. For every $0 < r \leq r_0$ one has
\begin{equation*}
\int_{\rmHom_m(X,X)} \calH^m_{\|\cdot\|}\left( \pi(M \cap \bB_{\|\cdot\|}(a,r)) \right) d\bmu_W(\pi) \geq (1-\veps) \calH^m_{\|\cdot\|}\left(M \cap \bB_{\|\cdot\|}(a,r)\right) \,.
\end{equation*}
\end{Proposition}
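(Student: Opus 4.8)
The plan is to take $W=\rmTan(M,a)$ (the tangent space, which for a $C^1$ submanifold coincides with the approximate tangent space) and to extract the inequality from the \emph{equality} clause in the definition of a density contractor, after a blow-up of $M\cap\bB_{\|\cdot\|}(a,r)$ that converges to a flat disk in $W$. I will not look for any estimate uniform in $\pi$: since the maps in $\rmspt\bmu_W$ may have arbitrarily large operator norm such a bound is in fact false, and Fatou's lemma will do the work instead.

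After translating so that $a=0$ and fixing an $|\cdot|$-orthonormal basis of $X$ adapted to $W:=\rmTan(M,0)$, I write $M$ near $0$ as the graph $\Phi(v)=(v,h(v))$ with $h\in C^1$, $h(0)=0$, $Dh(0)=0$, and I choose $r_0>0$ so small that for $0<r\le r_0$ one has $M\cap\bB_{\|\cdot\|}(0,r)=\Phi(\Omega_r)$ with $\Omega_r:=\Phi^{-1}(\bB_{\|\cdot\|}(0,r))$ a \emph{compact} subset of $W$; this uses only that $M$ is an embedded $C^1$ submanifold together with the inequality $|v|\le|(v,h(v))|$. Rescaling by $r^{-1}$, set $M_r:=r^{-1}(M\cap\bB_{\|\cdot\|}(0,r))$, $\Phi_r(u):=(u,r^{-1}h(ru))$ and $\Omega_0:=W\cap\bB_{\|\cdot\|}(0,1)$. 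Then $M_r=\Phi_r(r^{-1}\Omega_r)$, the maps $\Phi_r$ converge to the inclusion $W\hookrightarrow X$ in $C^1$ uniformly on bounded subsets of $W$, and for some $\eta_r\to0$ one checks (using only equivalence of norms) that $\Phi_r$ is $(1\pm\eta_r)$-bi-Lipschitz on a fixed compact convex neighbourhood of $\Omega_0$ in $W$ and that $(1+\eta_r)^{-1}\Omega_0\subseteq r^{-1}\Omega_r\subseteq(1-\eta_r)^{-1}\Omega_0$. Because $\calH^m_{\|\cdot\|}$ is $m$-homogeneous and $\bmu_W$ is carried by $\rmHom(X,W)$, the assertion is scale invariant and reduces to proving, for small $r$,
\begin{equation*}
\int_{\rmHom(X,W)}\calH^m_{\|\cdot\|}\bigl(\pi(M_r)\bigr)\,d\bmu_W(\pi)\ \ge\ (1-\veps)\,\calH^m_{\|\cdot\|}(M_r)\,.
\end{equation*}
The bi-Lipschitz bounds and the $m$-homogeneity of $\calH^m_{\|\cdot\|}$ on $W$ give $\calH^m_{\|\cdot\|}(M_r)\to V_0:=\calH^m_{\|\cdot\|}(\Omega_0)\in(0,\infty)$, so it is enough to show $\liminf_{r\to0}\int_{\rmHom(X,W)}\calH^m_{\|\cdot\|}(\pi(M_r))\,d\bmu_W(\pi)\ge V_0$.

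For this I first prove the pointwise statement $\liminf_{r\to0}\calH^m_{\|\cdot\|}(\pi(M_r))\ge\calH^m_{\|\cdot\|}(\pi(\Omega_0))$ for \emph{every} $\pi\in\rmHom(X,W)$. If $\rmrank(\pi|_W)<m$ the right-hand side is $0$. If $\pi|_W$ is invertible, then $g_r:=\pi\circ\Phi_r$ converges to $\pi|_W$ in $C^1$ on the said neighbourhood of $\Omega_0$, so for $r$ small (depending on $\pi$) $g_r$ is there an injective $C^1$ map with everywhere invertible differential; since $\calH^m_{\|\cdot\|}\hel W$ is a constant multiple of Lebesgue measure, the change of variables formula and $(1+\eta_r)^{-1}\Omega_0\subseteq r^{-1}\Omega_r$ give
\begin{equation*}
\calH^m_{\|\cdot\|}\bigl(\pi(M_r)\bigr)\ \ge\ \calH^m_{\|\cdot\|}\bigl(g_r((1+\eta_r)^{-1}\Omega_0)\bigr)=\int_{(1+\eta_r)^{-1}\Omega_0}|\det Dg_r|\,d\calH^m_{\|\cdot\|}\ \longrightarrow\ |\det(\pi|_W)|\,V_0=\calH^m_{\|\cdot\|}(\pi(\Omega_0))\,.
\end{equation*}
Now each $M_r$ ($r\le r_0$) is compact, so $\pi\mapsto\calH^m_{\|\cdot\|}(\pi(M_r))$ is upper semicontinuous by \ref{41}(1) and hence Borel; applying Fatou's lemma along an arbitrary sequence $r_k\downarrow0$ yields $\liminf_{r\to0}\int_{\rmHom(X,W)}\calH^m_{\|\cdot\|}(\pi(M_r))\,d\bmu_W(\pi)\ge\int_{\rmHom(X,W)}\calH^m_{\|\cdot\|}(\pi(\Omega_0))\,d\bmu_W(\pi)$, and the latter integral equals $\calH^m_{\|\cdot\|}(\Omega_0)=V_0$ — this is precisely the equality clause in the definition of a density contractor on $W$, applied to the Borel set $\Omega_0\subseteq W$. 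Together with $\calH^m_{\|\cdot\|}(M_r)\to V_0>0$ this gives $\liminf_{r\to0}\bigl[\int_{\rmHom(X,W)}\calH^m_{\|\cdot\|}(\pi(M_r))\,d\bmu_W(\pi)/\calH^m_{\|\cdot\|}(M_r)\bigr]\ge1$, so the displayed inequality holds for all $r$ below a suitable $r_0$, which is the claim.

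The real obstacle is the pointwise numerator bound and the choice to pass to the limit by Fatou rather than by dominated convergence. The naive hope of a bound $\calH^m_{\|\cdot\|}(\pi(M_r))\ge(1-o(1))\calH^m_{\|\cdot\|}(\pi(\Omega_0))$ uniform in $\pi$ is false: for an ill-conditioned $\pi|_W$ the graph correction $h$ can displace $\pi(M_r)$ by an amount comparable to the smallest width of $\pi(\Omega_0)$, and, $\pi\mapsto\vvvert\pi\vvvert^m$ being in general not $\bmu_W$-integrable, no dominating function is available; what rescues the argument is that only the pointwise $\liminf$ in $\pi$ is needed and that the right-hand constant $V_0$ is furnished for free by the equality case of the density-contractor hypothesis. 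A minor but necessary point, used repeatedly, is the verification that for small $r_0$ the set $M\cap\bB_{\|\cdot\|}(0,r)$ is a graph over a compact piece of $W$ and is itself compact, so that \ref{41}(1) is applicable and the rescaled domains $r^{-1}\Omega_r$ sandwich down to $\Omega_0$.
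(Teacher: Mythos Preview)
Your proof is correct, and it is genuinely different from the paper's.

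Both arguments take $W=\rmTan(M,a)$, but from there they diverge sharply. The paper first truncates to a compact set $H_\kappa\subset\rmHom(X,W)$ of projections with controlled Lipschitz constant and controlled inverse on $W$, then on $H_\kappa$ compares $\calH^m_{|\cdot|}(\pi(M\cap\bB_{\|\cdot\|}(0,r)))$ with $\calH^m_{|\cdot|}(\pi(D_r))$ by an explicit Jacobian estimate (Lemma~\ref{54}, a multilinear determinant bound). The resulting error terms are sums of $\calH^m_{\|\cdot\|}(\pi(O_\lambda(D_r)))$ for rotated copies of $D_r$ lying in planes $W_\lambda\neq W$, and these are absorbed using the \emph{inequality} clause of the density contractor. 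Kirchheim's area formula closes the argument.

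Your route is considerably lighter: blow up, establish the pointwise lower bound $\liminf_{r\to0}\calH^m_{\|\cdot\|}(\pi(M_r))\geq\calH^m_{\|\cdot\|}(\pi(\Omega_0))$ for each fixed $\pi$ (trivial if $\pi|_W$ is singular, and by $C^1$ convergence of $g_r$ to the invertible linear map $\pi|_W$ otherwise), then invoke Fatou and the \emph{equality} clause of the density contractor at $V=W$. The injectivity of $g_r$ on the convex set $(1+\eta_r)^{-1}\Omega_0$ for small $r$ follows from the mean value estimate, as you implicitly use; the measurability of the integrand via \ref{41}(1) is legitimate since $M_r$ is compact with finite $\calH^m_{\|\cdot\|}$ measure.

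What each approach buys: yours is shorter, avoids Lemma~\ref{54} entirely, and---notably---never invokes the inequality clause of the density contractor, only the equality at $W$. The paper's argument is more quantitative (one could in principle extract an explicit $r_0$ in terms of the $C^1$ modulus of $M$ and the tail behaviour of $\bmu_W$), and the machinery it sets up is recycled almost verbatim in the proof of Proposition~\ref{63}, where the weight $\lno\bg(x)\rno$ enters. If you later adapt your method to prove \ref{63}, you would multiply the integrand by $\lno\bg(F_r(\cdot))\rno$ and use a Lebesgue point argument; this should go through with only minor changes.
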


\begin{proof}
Abbreviate $W = \rmTan(M,a)$. Owing to the translation invariance of the Hausdorff measure and to the linearity of $\pi$ in the statement, replacing $M$ by $M-a$ we may assume that $a=0$. As usual we consider a Euclidean structure on $X$, and we let $C \geq 1$ be such that $C^{-1}\|x\| \leq |x| \leq C \|x\|$ for every $x \in X$ and $C^{-1} \leq \psi(W) \leq C$ for every $W \in \bG_m(X)$ (recall \ref{25}). We let $0 < \hat{\veps} < 1$ be undefined for now.
\par 
We define
\begin{equation*}
\rmHom^{\sim}(X,W) = \rmHom(X,W) \cap \{ \pi : \rmrank \pi \leq m-1 \}
\end{equation*}
as well as
\begin{equation*}
\rmHom_{\rminv}(X,W) = \rmHom(X,W) \cap \{ \pi : \rmrank \pi = m \}\,,
\end{equation*}
and we notice the first set is relatively closed and second one relatively open.
For each $\kappa=1,2,\ldots$ we define
\begin{equation*}
\rmHom_{\rminv,\kappa}(X,W) = \rmHom_{\rminv}(X,W) \cap \left\{ \pi : \rmLip_{|\cdot|} \pi \leq \kappa \text{ and }  \rmLip_{|\cdot|} \left(\pi|_W^{-1}\right) \leq \kappa \right\} \,.
\end{equation*}
Since $\rmLip_{|\cdot|}\pi = \vvvert \pi \vvvert$, and since $\rmHom_{\rminv}(X,W) \to \rmHom(W,W) : \pi \mapsto \pi|_W^{-1}$ is continuous, it follows that $\rmHom_{\rminv,\kappa}(X,W)$ is relatively closed. Also,
\begin{equation*}
\rmHom_{\rminv}(X,W) = \cup_{\kappa=1}^\infty \rmHom_{\rminv,\kappa}(X,W) \,.
\end{equation*}
We abbreviate $H_\kappa = \rmHom^{\sim}(X,W) \cup \rmHom_{\rminv,\kappa}(X,W)$ and we infer from the monotone convergence theorem that there exists $\kappa$ such that
\begin{multline}
\label{eq.thu.15}
\int_{H_\kappa} \calH^m_{\|\cdot\|}\left( \pi(W \cap \bB_{\|\cdot\|}(0,1)) \right) d\bmu_W(\pi) \\ \geq \left(1 - \hat{\veps} \right) \int_{\rmHom_m(X,X)} \calH^m_{\|\cdot\|}\left( \pi(W \cap \bB_{\|\cdot\|}(0,1)) \right) d\bmu_W(\pi)\\
=  \left(1 - \hat{\veps} \right) \calH^m_{\|\cdot\|}\left( W \cap \bB_{\|\cdot\|}(0,1) \right)
= \left(1 - \hat{\veps} \right) \balpha(m)
\end{multline}
where the last equality is H. Busemann's identity \eqref{eq.busemann.1}.
For a reason that will become clear momentarily we then define
\begin{equation*}
\check{\veps} = \min \left\{ \hat{\veps} , \frac{1}{2\kappa^2} \right\} \,.
\end{equation*}
\par 
There exists $r_1 > 0$ and $f : W \cap \bB_{|\cdot|}(0,r_1) \to W^\perp$ of class $C^1$ such that 
\begin{enumerate}
\item[(a)] $\rmLip_{|\cdot|} f < \check{\veps}$;
\item[(b)] $F(W \cap \bB_{|\cdot|}(0,r_1)) \subset M$,
\end{enumerate}
where $F = \iota_{W} + \iota_{W^\perp} \circ f$, and $\iota_{W}$ and $\iota_{W^\perp}$ are the obvious inclusion maps. We let $\pi_W$ denote the orthogonal projection onto $W$.
It clearly follows that if $0 < r \leq r_1/C$ then
\begin{enumerate}
\item[(c)] $F\left(\pi_W(M\cap\bB_{\|\cdot\|}(0,r))\right) = M \cap \bB_{\|\cdot\|}(0,r)$;
\end{enumerate}
and furthermore, 
\begin{enumerate}
\item[(d)] $W \cap \bB_{\|\cdot\|}\left(0,(1-C^2\hat{\veps})r\right) \subset \pi_W(M\cap\bB_{\|\cdot\|}(0,r)) \subset W \cap \bB_{\|\cdot\|}\left(0,(1+C^2\hat{\veps})r\right)$,
\end{enumerate}
 because for each $x \in W$ one has $\|F(x)\| \leq \|x\| + \|f(x)\| \leq \|x\| + C^2 \left(\rmLip_{|\cdot|} f\right) \|x\|$.
 \par 
 For the remaining part of this proof we fix $0 < r \leq r_1/C$ and we abbreviate $D_r = \pi_W(M\cap\bB_{\|\cdot\|}(0,r))$. For a while we turn to computing Hausdorff measures $\calH^m_{|\cdot|}$ with respect to the Euclidean norm $|\cdot|$. 
 \par 
We claim that if $\pi \in \rmHom^\sim(X,W)$ or $\pi \in \rmHom_{\rminv,\kappa}(X,W)$, applying the (Euclidean) area formula to the mappings $\pi \circ F : W \to W$ and $\pi|_W : W \to W$, \cite[3.2.22]{GMT} we obtain
 \begin{equation}
 \label{eq.thu.17}
 \calH^m_{|\cdot|} \left( \pi(M \cap \bB_{\|\cdot\|}(0,r))\right) = \calH^m_{|\cdot|} \left( \pi(F(D_r)) \right) = \int_{W \cap D_r} J_m(\pi \circ F)(x) d \calH^m_{|\cdot|}(x)
 \end{equation}
 and
 \begin{equation}
 \label{eq.thu.18}
  \calH^m_{|\cdot|} \left( \pi(D_r) \right) = \int_{W \cap D_r} J_m \pi(x) d \calH^m_{|\cdot|}(x) \,.
 \end{equation}
Indeed if $\pi \in \rmHom^\sim(X,W)$ then the two sides of both equations obviously vanish. Furthermore the second equation is in fact valid for every $\pi \in \rmHom_{\rminv}(X,W)$ as well, because in that case $\pi|_W$ is injective.
Therefore it remains only to establish the first equation in case $\pi \in \rmHom_{\rminv,\kappa}(X,W)$. The reason for which it holds true is that $(\pi \circ F)|_{D_r}$ is injective. Indeed if $\pi(F(x))=\pi(F(x'))$ for $x,x' \in D_r$, then $\pi(\iota_{W}(x))-\pi(\iota_{W}(x')) = \pi(\iota_{W^\perp}(f(x'))) - \pi(\iota_{W^\perp}(f(x)))$ and in turn
\begin{multline*}
\kappa^{-1} |x-x'| \leq \left( \rmLip_{|\cdot|} \pi|_W^{-1} \right)^{-1} |x-x'| \leq |\pi(x)-\pi(x')| \\
= \left| \pi(f(x)) - \pi(f(x')) \right| \leq \left(\rmLip_{|\cdot|}  \pi \right) \left( \rmLip_{|\cdot|}  f \right) |x-x'| \leq \kappa \check{\veps} |x-x'|
\end{multline*}
so that $x=x'$ because $\check{\veps} \leq \kappa^{-2}/2$.
\par 
From this ensues that if $\pi \in H_\kappa$ then
\begin{multline}
\label{eq.thu.11}
\left|\calH^m_{|\cdot|} \left( \pi(M \cap \bB_{\|\cdot\|}(0,r))\right) -  \calH^m_{|\cdot|} \left( \pi(D_r) \right) \right| \\ \leq \int_{W \cap D_r} \left|J_m(\pi \circ F)(x) -  J_m \pi(x) \right| d \calH^m_{|\cdot|}(x)\,.
\end{multline} 
We choose an orthonormal basis $e_1,\ldots,e_n$ of $X$ such that $e_1,\ldots,e_m$ is a basis of $W$, and we let $A \in M_{m \times n}(\Rn)$ be the matrix of $\pi$ with respect to these bases. Letting $B \in M_{n \times m}(\Rn)$ be defined as in \ref{54} it is clear that $AB$ is the matrix of $\pi|_W$ with respect to the matrix $e_1,\ldots,e_m$ of $W$, thus $J_m \pi = |\det(AB)|$. Given $x \in D_r$ we notice that $DF(x) = \iota_W + \iota_{W^\perp} \circ Df(x)$ whence the matrix $B_x'$ of $DF(x)$ with respect to $e_1,\ldots,e_m$ and $e_1,\ldots,e_n$ is of the form of $B'$ in \ref{54} with $E_x$ being the matrix of $Df(x)$ with respect to the obvious bases. In particular $\|E_x\|_\infty \leq \rmLip_{|\cdot|} f \leq \hat{\veps}$. Moreover $J_m(\pi \circ F)(x) = |\det(AB_x')|$.  Thus it follows from \ref{54} that
\begin{multline}
\label{eq.thu.12}
\left| J_m(\pi \circ F)(x) - J_m\pi(x)  \right| = \left| |\det(AB'_x)| - |\det(AB)| \right| \\ \leq \bc_{\ref{54}}(n,m)\hat{\veps} \sum_{\substack{\lambda \in \Lambda(n,m)\\ \lambda \neq \{1,\ldots,m\}}} \left| \det(A_\lambda) \right| \,.
\end{multline}
Now given $\lambda \in \Lambda(n,m)$ we let $W_\lambda = \rmspan\{e_{\lambda(1)},\ldots,e_{\lambda(m)}\}$ and we select $O_\lambda : X \to X$ be a isometric linear isomorphism such that $O_\lambda(W)=W_\lambda$. It then becomes clear that $A_\lambda$ is the matrix of $(\pi \circ O_\lambda)|_W$ with respect to the basis $e_1,\ldots,e_m$ of $W$. Thus $|\det(A_\lambda)| = J_m(\pi \circ O_\lambda)|_W$ and in turn
\begin{equation*}
\int_{W \cap D_r} |\det(A_\lambda)| d \calH^m_{|\cdot|} = \calH^m_{|\cdot|}\left(\pi(O_\lambda(D_r))\right)
\end{equation*}
Together with \eqref{eq.thu.11} and \eqref{eq.thu.12} this yields
\begin{equation*}
\begin{split}
\bigg|\calH^m_{|\cdot|} \left( \pi(M \cap \bB_{\|\cdot\|}(0,r))\right) & -  \calH^m_{|\cdot|} \left( \pi(D_r) \right) \bigg| \\
& \leq \bc_{\ref{54}}(n,m) \hat{\veps} \sum_{\substack{\lambda \in \Lambda(n,m)\\ \lambda \neq \{1,\ldots,m\}}}\calH^m_{|\cdot|}\left(\pi(O_\lambda(D_r))\right)
\end{split}
\end{equation*}
Since all sets whose measure appear in the above inequality are subsets of $W$, multiplying both sides by $\psi(W)$ we obtain at once that
\begin{equation*}
\begin{split}
\bigg|\calH^m_{\|\cdot\|} \left( \pi(M \cap \bB_{\|\cdot\|}(0,r))\right) & -  \calH^m_{\|\cdot\|} \left( \pi(D_r) \right) \bigg| \\
& \leq \bc_{\ref{54}}(n,m) \hat{\veps} \sum_{\substack{\lambda \in \Lambda(n,m)\\ \lambda \neq \{1,\ldots,m\}}}\calH^m_{\|\cdot\|}\left(\pi(O_\lambda(D_r))\right)
\end{split}
\end{equation*}
Next we integrate both sides with respect to $\bmu_W$ on the set $H_\kappa =  \rmHom^\sim(X,W) \cup \rmHom_{\rminv,\kappa}(X,W)$:
\begin{multline}
\label{eq.thu.13}
\Bigg|\int_{H_\kappa}\calH^m_{\|\cdot\|}(\pi(D_r))d\bmu_W(\pi) - \int_{H_\kappa} \calH^m_{\|\cdot\|} \left( \pi(M \cap \bB_{\|\cdot\|}(0,r))\right) d\bmu_W(\pi) \Bigg| \\
 \leq \bc_{\ref{54}(n,m)} \hat{\veps} \sum_{\substack{\lambda \in \Lambda(n,m)\\ \lambda \neq \{1,\ldots,m\}}} \int_{\rmHom_m(X,X)}\calH^m_{\|\cdot\|}\left(\pi(O_\lambda(D_r))\right)d\bmu(\pi)\\
 \leq \bc_{\ref{54}(n,m)} \hat{\veps} \sum_{\substack{\lambda \in \Lambda(n,m)\\ \lambda \neq \{1,\ldots,m\}}} \calH^m_{\|\cdot\|}\left(O_\lambda(D_r)\right) \,.
\end{multline}
It follows from (d) above and \eqref{eq.thu.15} that
\begin{multline}
\label{eq.thu.24}
\int_{H_\kappa} \calH^m_{\|\cdot\|} \left( \pi(D_r) \right) d\bmu_W(\pi) \geq \left( 1-C\hat{\veps} \right)^mr^m \int_{H_\kappa} \calH^m_{\|\cdot\|}\left( \pi(W \cap \bB_{\|\cdot\|}(0,1)) \right) d\bmu_W(\pi)\\ \geq \left( 1 - \hat{\veps} \right) \left( 1-C\hat{\veps} \right)^m \balpha(m) r^m \,.
\end{multline}
It further follows from (d) above and H. Busemeann's identity \eqref{eq.busemann.1} that
\begin{equation*}
 \calH^m_{\|\cdot\|}(D_r) \leq  \left( 1 + C^2 \hat{\veps} \right)^m \balpha(m)r ^m \,, 
\end{equation*}
and in turn
\begin{multline*}
\calH^m_{\|\cdot\|}(O_\lambda(D_r)) = \psi(W_\lambda) \calH^m_{|\cdot|}(O_\lambda(D_r)) = \psi(W_\lambda) \calH^m_{|\cdot|}(D_r) \\
= \psi(W_\lambda) \psi(W)^{-1}\calH^m_{\|\cdot\|}(D_r) \leq \balpha(m) C^2 \left( 1 + C^2\right)^m r ^m
\end{multline*}
Using these and \eqref{eq.thu.13} we see that
\begin{multline*}
\int_{\rmHom_m(X,X)} \calH^m_{\|\cdot\|} \left( \pi(M \cap \bB_{\|\cdot\|}(0,r))\right) d\mu(\pi) \\
\geq \bigg( \left( 1 - \hat{\veps} \right)\left( 1 - C^2\hat{\veps}\right)^m - \hat{\veps} \bc_{\ref{54}}(n,m)\left( \rmcard \Lambda(n,m)\right) C^2\left(1+C^2\right)^m \bigg) \balpha(m)r^m \,.
\end{multline*}
Furthermore it follows from B. Kirchheim's area formula \cite{KIR.94} that there exists $r_2 > 0$ such that for every $0 < r \leq r_2$ one has
\begin{equation*}
(1 + \hat{\veps}) \balpha(m)r^m \geq \calH^m_{\|\cdot\|}\left( M \cap \bB_{\|\cdot\|}(0,r) \right) \,.
\end{equation*}
Finally we let $r_0 = \min \{ r_1/C , r_2 \}$ and it should now be obvious how to choose $\hat{\veps}$ according to $\veps$, $n$, $m$ and $C$ so that the conclusion holds.
\end{proof}

\begin{Lemma}
\label{56}
$\zeta_{\bmu}(A) \leq \calG^m_{\bmu}(A)$ whenever $A \subset X$ is Borel. 
\end{Lemma}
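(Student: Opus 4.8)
The plan is to deduce the lemma from countable subadditivity of the set function $\zeta_{\bmu}$ on Borel sets; once that is known, the inequality $\zeta_{\bmu} \leq \calG^m_{\bmu}$ is a formal consequence of the Carathéodory method II construction of $\calG^m_{\bmu}$ in \ref{53}.

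First I would establish: if $A \subseteq \bigcup_{j\in J} B_j$ with $J$ countable and $A$, every $B_j$ Borel, then $\zeta_{\bmu}(A) \leq \sum_j \zeta_{\bmu}(B_j)$. To this end, fix $W \in \bG_m(X)$. For each $\pi \in \rmHom_m(X,X)$ we have $\pi(A) \subseteq \bigcup_j \pi(B_j)$, hence $\calH^m_{\|\cdot\|}(\pi(A)) \leq \sum_j \calH^m_{\|\cdot\|}(\pi(B_j))$ by countable subadditivity of the outer measure $\calH^m_{\|\cdot\|}$. The functions $\pi \mapsto \calH^m_{\|\cdot\|}(\pi(B_j))$ are $\bmu_W$-measurable by Proposition \ref{41}(3), so integrating this pointwise inequality against the probability measure $\bmu_W$ and exchanging sum and integral (Tonelli, nonnegative terms) gives
\begin{equation*}
\int_{\rmHom_m(X,X)} \calH^m_{\|\cdot\|}(\pi(A))\, d\bmu_W(\pi) \leq \sum_j \int_{\rmHom_m(X,X)} \calH^m_{\|\cdot\|}(\pi(B_j))\, d\bmu_W(\pi) \leq \sum_j \zeta_{\bmu}(B_j) \,,
\end{equation*}
the last step being the very definition of $\zeta_{\bmu}(B_j)$ as a supremum over $\bG_m(X)$, of which $W$ is one competitor. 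Taking the supremum over $W \in \bG_m(X)$ on the left-hand side yields the claimed subadditivity, since the right-hand side does not depend on $W$.

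Then I would conclude as follows: given $\delta > 0$ and any countable Borel cover $\{B_j : j \in J\}$ of $A$ with $\rmdiam B_j < \delta$, the subadditivity just proved gives $\zeta_{\bmu}(A) \leq \sum_j \zeta_{\bmu}(B_j)$, so taking the infimum over all such covers gives $\zeta_{\bmu}(A) \leq \calG^m_{\bmu,\delta}(A)$; and since $\calG^m_{\bmu}(A) = \sup_{\delta > 0} \calG^m_{\bmu,\delta}(A) \geq \calG^m_{\bmu,\delta}(A)$, the lemma follows. The one step requiring care is the measurability of the integrands $\pi \mapsto \calH^m_{\|\cdot\|}(\pi(B_j))$, which is precisely what Proposition \ref{41}(3) provides; to remain within its hypothesis $\calH^m_{\|\cdot\|}(B_j) < \infty$ one may, when a covering set has infinite $\calH^m_{\|\cdot\|}$ measure, either approximate it from inside by Borel sets of finite measure and pass to the limit, or simply observe that there is nothing to prove once $\calG^m_{\bmu}(A) = \infty$. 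I do not expect any genuine obstacle beyond this routine measurability bookkeeping, everything else being the standard formalism of Carathéodory's construction.
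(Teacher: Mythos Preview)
Your proof is correct and follows essentially the same approach as the paper: establish countable subadditivity of $\zeta_{\bmu}$ via subadditivity of $\calH^m_{\|\cdot\|}$ on the images $\pi(B_j)$, integrate against $\bmu_W$, take the supremum over $W$, and then conclude by the arbitrariness of the cover in the Carath\'eodory construction. The paper's version is just terser and does not spell out the measurability bookkeeping you mention.
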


\begin{proof}
If $\{ B_j : j \in J \}$ is a countable family of Borel subsets of $X$ so that $A \subset \cup_{j \in J} B_j$ then $\calH^m_{\|\cdot\|}(\pi(A)) \leq \sum_{j \in J} \calH^m_{\|\cdot\|}(\pi(B_j))$ for every $\pi \in \rmHom_m(X,X)$. Thus for each $W \in \bG_m(X)$ one has
\begin{equation*}
\int_{\rmHom_m(X,X)} \calH^m_{\|\cdot\|}(\pi(A)) d\bmu_W(\pi) \leq \sum_{j \in J} \int_{\rmHom_m(X,X)} \calH^m_{\|\cdot\|}(\pi(B_j)) d\bmu_W(\pi) \,.
\end{equation*}
Taking the supremum over $W \in \bG_m(X)$ on both sides yields $\zeta_{\bmu}(A) \leq \sum_{j \in J} \zeta_{\bmu}(B_j)$. The conclusion follows from the arbitrariness of $\{ B_j : j \in J \}$.
\end{proof}

\begin{Theorem}
\label{57}
If $A \subset X$ is Borel and countably $(\calH^m_{\|\cdot\|},m)$ rectifiable then $\calG^m_{\bmu}(A) = \calH^m_{\|\cdot\|}(A)$.
\end{Theorem}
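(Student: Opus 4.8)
First I would prove the easy half: $\calG^m_{\bmu}(B)\leq\calH^m_{\|\cdot\|}(B)$ for every Borel countably $(\calH^m_{\|\cdot\|},m)$ rectifiable $B\subset X$ (which we may assume has finite measure, else there is nothing to prove). Given $\delta>0$, partition $X$ into countably many Borel sets $Q_1,Q_2,\ldots$ of diameter $<\delta$ (small dyadic cubes of the background Euclidean structure) and set $B_j=B\cap Q_j$; each $B_j$ is Borel and countably $(\calH^m_{\|\cdot\|},m)$ rectifiable, and $\calH^m_{\|\cdot\|}(B)=\sum_j\calH^m_{\|\cdot\|}(B_j)$. By \ref{413} applied with each $W\in\bG_m(X)$ (note $\bmu_W$ is a density contractor on $W$), $\int_{\rmHom_m(X,X)}\calH^m_{\|\cdot\|}(\pi(B_j))\,d\bmu_W(\pi)\leq\calH^m_{\|\cdot\|}(B_j)$, hence $\zeta_{\bmu}(B_j)\leq\calH^m_{\|\cdot\|}(B_j)$, and therefore $\calG^m_{\bmu,\delta}(B)\leq\sum_j\zeta_{\bmu}(B_j)\leq\calH^m_{\|\cdot\|}(B)$. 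Letting $\delta\downarrow0$ gives $\calG^m_{\bmu}(B)\leq\calH^m_{\|\cdot\|}(B)$; in particular $\calG^m_{\bmu}$ vanishes on $\calH^m_{\|\cdot\|}$ null rectifiable sets.

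\textbf{Reduction to a single submanifold.} By the structure of rectifiable sets (\cite[3.2.29]{GMT}) there are Borel $C^1$ $m$ dimensional submanifolds $M_1,M_2,\ldots$ of $X$ with $\calH^m_{\|\cdot\|}\big(A\setminus\bigcup_iM_i\big)=0$. Put $A_0=A\setminus\bigcup_iM_i$ and $P_i=(A\cap M_i)\setminus\bigcup_{i'<i}M_{i'}$, a Borel partition of $A$ with $P_i\subset M_i$. Since $\calG^m_{\bmu}$ is a Borel measure (see \ref{53}), $\calG^m_{\bmu}(A)=\calG^m_{\bmu}(A_0)+\sum_i\calG^m_{\bmu}(P_i)$, and similarly for $\calH^m_{\|\cdot\|}$; by the upper bound $\calG^m_{\bmu}(A_0)\leq\calH^m_{\|\cdot\|}(A_0)=0$. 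So it suffices to prove $\calG^m_{\bmu}(P)\geq\calH^m_{\|\cdot\|}(P)$ whenever $P$ is a Borel subset of a $C^1$ $m$ dimensional submanifold $M\subset X$.

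\textbf{Lower bound on a submanifold.} Fix $0<\veps<1$. The Radon measure $\calH^m_{\|\cdot\|}\hel M$ is, by Kirchheim's area formula, locally a bi‑Lipschitz pushforward of a bounded‑above‑and‑below density times Lebesgue measure on $\Rm$; hence it is asymptotically doubling, so the Lebesgue density theorem and the Vitali covering theorem hold for it (with $\|\cdot\|$ balls). Therefore, for $\calH^m_{\|\cdot\|}\hel P$ almost every $x$: (i) $x\in M$, so \ref{55} applied to $M$ at $x$ gives $r_0(x)>0$ and $W_x\in\bG_m(X)$ with $\int\calH^m_{\|\cdot\|}(\pi(M\cap\bB_{\|\cdot\|}(x,r)))\,d\bmu_{W_x}(\pi)\geq(1-\veps)\calH^m_{\|\cdot\|}(M\cap\bB_{\|\cdot\|}(x,r))$ for $0<r\leq r_0(x)$; (ii) $x$ is a Lebesgue density point of $P$ relative to $\calH^m_{\|\cdot\|}\hel M$, so $\calH^m_{\|\cdot\|}((M\setminus P)\cap\bB_{\|\cdot\|}(x,r))\leq\veps\,\calH^m_{\|\cdot\|}(M\cap\bB_{\|\cdot\|}(x,r))$ once $r$ is small. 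Writing $D=\bB_{\|\cdot\|}(x,r)$ for such $x$ and $r$, using \ref{56}, then the definition of $\zeta_{\bmu}$ with $W=W_x$, then subadditivity of $\calH^m_{\|\cdot\|}$ along $M\cap D=(P\cap D)\cup((M\setminus P)\cap D)$, and finally \ref{55} together with \ref{413} (for the Borel rectifiable set $(M\setminus P)\cap D\subset M$):
\begin{align*}
\calG^m_{\bmu}(P\cap D) &\geq \zeta_{\bmu}(P\cap D) \geq \int\calH^m_{\|\cdot\|}(\pi(P\cap D))\,d\bmu_{W_x}(\pi)\\
&\geq \int\calH^m_{\|\cdot\|}(\pi(M\cap D))\,d\bmu_{W_x}(\pi) - \int\calH^m_{\|\cdot\|}(\pi((M\setminus P)\cap D))\,d\bmu_{W_x}(\pi)\\
&\geq (1-\veps)\calH^m_{\|\cdot\|}(M\cap D) - \calH^m_{\|\cdot\|}((M\setminus P)\cap D)\\
&\geq (1-2\veps)\calH^m_{\|\cdot\|}(M\cap D) \geq (1-2\veps)\calH^m_{\|\cdot\|}(P\cap D).
\end{align*}
The balls $\bB_{\|\cdot\|}(x,r)$, $x$ in this full measure subset of $P$ and $0<r\leq r_0(x)$ small, form a fine cover of $\calH^m_{\|\cdot\|}$ almost all of $P$; the Vitali covering theorem extracts a countable pairwise disjoint subfamily $D_j=\bB_{\|\cdot\|}(x_j,r_j)$ with $\calH^m_{\|\cdot\|}\big(P\setminus\bigcup_jD_j\big)=0$, whence
\begin{equation*}
\calG^m_{\bmu}(P)\geq\sum_j\calG^m_{\bmu}(P\cap D_j)\geq(1-2\veps)\sum_j\calH^m_{\|\cdot\|}(P\cap D_j)=(1-2\veps)\calH^m_{\|\cdot\|}(P).
\end{equation*}
Letting $\veps\downarrow0$ and combining with the upper bound completes the proof.

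\textbf{Where the difficulty lies.} The substantive geometric input is \ref{55}, which is already available, together with \ref{413} and \ref{56}; granted these, the argument is a standard Carathéodory‑II comparison. The points demanding care are the passage from the pointwise density estimate to the global inequality (the Vitali covering step, legitimate precisely because $\calH^m_{\|\cdot\|}\hel M$ is asymptotically doubling) and the bookkeeping of the rectifiable decomposition. A minor technical wrinkle is that \ref{55} and $\calG^m_{\bmu}$ live on $\|\cdot\|$ balls whereas Besicovitch‑type covering lemmas are usually stated for Euclidean balls; this is avoided by working throughout with the intrinsically doubling measure $\calH^m_{\|\cdot\|}\hel M$, for which the metric forms of the covering and differentiation theorems apply verbatim.
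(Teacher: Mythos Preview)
Your proof is correct and follows essentially the same architecture as the paper's: the upper bound via \ref{413}, the reduction to pieces of $C^1$ submanifolds via \cite[3.2.29]{GMT}, and the lower bound via \ref{55}, \ref{56} and a Vitali covering argument.

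There is one genuine, if minor, technical difference in how the lower bound is extended from the full submanifold $M$ to a Borel subset $P\subset M$. The paper first proves $\calH^m_{\|\cdot\|}(M)\leq\calG^m_{\bmu}(M)$ for the whole submanifold, then observes that any open subset $M\cap U$ is itself a $C^1$ submanifold, so the inequality holds for all open sets; since $\calG^m_{\bmu}\hel M$ is a finite Borel measure (by the already-proved upper bound) it is outer regular, and the inequality passes to all Borel subsets. You instead work with $P$ directly: at Lebesgue density points of $P$ in $M$ you split $M\cap D$ into $P\cap D$ and a small remainder, bound the remainder's contribution by \ref{413}, and absorb it into the $\veps$. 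Your route needs the density theorem (hence some doubling input for $\calH^m_{\|\cdot\|}\hel M$, which you justify via Kirchheim), while the paper's route needs only outer regularity of a finite Borel measure. Both are standard; the paper's trick is slightly slicker in that it avoids the density-point bookkeeping, but your version is perfectly valid and arguably more self-contained. Note also that the paper obtains the Vitali relation from \cite[2.8.9, 2.8.18]{GMT} (Besicovitch in finite dimensional normed spaces works for any Radon measure), which sidesteps the doubling discussion entirely.
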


\begin{proof}
We start by showing that $\calG^m_{\bmu}(A) \leq \calH^m_{\|\cdot\|}(A)$.
Given $\delta > 0$ choose a countable Borel partition $\{B_j : j \in J \}$ of $A$ with $\rmdiam B_j < \delta$, $j \in J$ (for instance the $B_j$ are the intersection of $A$ with dyadic semicubes of some generation). Since each $B_j$ is countably $(\calH^m_{\|\cdot\|},m)$ rectifiable it follows from \ref{413} that $\zeta_{\bmu}(B_j) \leq \calH^m_{\|\cdot\|}(B_j)$. Therefore
\begin{equation*}
\calG^m_{\bmu,\delta}(A) \leq \sum_{j \in J} \zeta_{\bmu}(B_j) \leq \sum_{j \in J}  \calH^m_{\|\cdot\|}(B_j) = \calH^m_{\|\cdot\|}(A)
\end{equation*}
and it remains to let $\delta \to 0^+$.
\par 
In order to establish the reverse inequality we start with the case when $A=M$ is an $m$ dimensional submanifold of $X$ of class $C^1$ such that $\calH^m_{\|\cdot\|}(M) < \infty$. Let $\veps > 0$. It follows from \ref{55} that there exists $r(a) > 0$ and $W \in \bG_m(X)$ such that for every $0 < r \leq r(a)$  one has
\begin{equation*}
\int_{\rmHom_m(X,X)} \calH^m_{\|\cdot\|}\left( \pi(M \cap \bB_{\|\cdot\|}(a,r)) \right) d\bmu_W(\pi) \geq (1-\veps) \calH^m_{\|\cdot\|}\left(M \cap \bB_{\|\cdot\|}(a,r)\right) \,.
\end{equation*}
Thus also
\begin{equation*}
\zeta_{\bmu}\left( M \cap \bB_{\|\cdot\|}(a,r) \right) \geq (1-\veps) \calH^m_{\|\cdot\|}\left(M \cap \bB_{\|\cdot\|}(a,r)\right) \,.
\end{equation*}
It follows from \cite[2.8.9 and 2.8.18]{GMT} that $\{\bB_{\|\cdot\|}(a,r) : a \in M \text{ and } 0 < r \leq r(a) \}$ is an $\calH^m_{\|\cdot\|} \hel M$ Vitali relation. Therefore there exists a countable subset $\{a_j : j \in J\}$ of $M$ and corresponding $0 < r_j \leq r(a_j)$, $j \in J$, such that, upon abbreviating $B_j = \bB_{\|\cdot\|}(a_j,r_j)$, the family $\{ B_j : j \in J\}$ is disjointed and $\calH^m_{\|\cdot\|}\left( M \setminus \cup_{j \in J} B_j \right) = 0$. Thus,
\begin{multline*}
(1-\veps) \calH^m_{\|\cdot\|} (M) = \sum_{j \in J} (1-\veps) \calH^m_{\|\cdot\|}(M \cap B_j) \leq \sum_{j \in J} \zeta_{\bmu}(M \cap B_j) \\
\leq \sum_{j \in J} \calG^m_{\bmu}(M \cap B_j) \leq \calG^m_{\bmu}(M) \,,
\end{multline*}
where the second inequality is a consequence of \ref{56}. It follows from the arbitrariness of $\veps > 0$ that the inequality is established in case $A = M$ is an $m$ dimensional $C^1$ submanifold. We consider the Borel finite measures $\phi_\calH = \calH^m_{\|\cdot\|} \hel M$ and $\phi_\calG = \calG^m_{\bmu} \hel M$ (that $\phi_\calG$ be finite follows from the fact that so is $\phi_\calH$ -- by assumption -- and the first part of this proof). Now if $U \subset X$ is open then $M \cap U$ is also an $m$ dimensional $C^1$ submanifold and therefore $\phi_\calH(U) \leq \phi_\calG(U)$. Since $\phi_\calG$ is outer regular (being a finite Borel measure in a metric space) we infer that $\phi_\calH(A) \leq \phi_\calG(A)$ for all Borel subsets $A \subset M$. Assuming now that $A$ is a Borel countable $(\calH^m_{\|\cdot\|},m)$ rectifiable subset of $X$, the conclusion follows from the fact that $A$ admits a partition into Borel sets $A_0,A_1,\ldots$ such that $\calH^m_{\|\cdot\|}(A_0)=0$ and each $A_j$, $j \geq 1$, is a subset of some $m$ dimensional submanifold of $X$ of class $C^1$, \cite[3.2.29]{GMT}.
\end{proof}

We close this section by showing that there exists a universally measurable choice of density contractors.

\begin{Proposition}
\label{58}
Assume that $W,W_1,W_2,\ldots$ are members of $\bG_m(X)$ and that $d(W,W_k) \to 0$ as $k \to \infty$. For every $n=1,2,\ldots$ the following holds:
\begin{equation*}
\lim_k \sup_{\substack{\pi \in \rmHom_m(X,X)\\\vvvert \pi \vvvert \leq n}} \Bigg| \calH^m_{\|\cdot\|} \left( \pi( W_k \cap B_{|\cdot|} )\right) - \calH^m_{\|\cdot\|} \left( \pi( W \cap B_{|\cdot|} )\right) \Bigg| = 0 \,.
\end{equation*}
\end{Proposition}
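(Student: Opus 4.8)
The plan is to reduce the statement to two ingredients: that the compact convex sets $\pi(W_k\cap B_{|\cdot|})$ and $\pi(W\cap B_{|\cdot|})$ are close in Hausdorff distance \emph{uniformly} over $\{\pi:\vvvert\pi\vvvert\le n\}$, with an error controlled by $d(W_k,W)$; and that on a fixed $m$-dimensional Euclidean space Lebesgue measure restricted to convex bodies lying in a fixed ball is uniformly continuous with respect to Hausdorff distance. The passage between $\calH^m_{\|\cdot\|}$ and $\calH^m_{|\cdot|}$ will be handled by the relation $\calH^m_{\|\cdot\|}\hel V=\psi(V)\,\calH^m_{|\cdot|}\hel V$ for $V\in\bG_m(X)$ together with the continuity, hence boundedness on the compact Grassmannian, of $\psi$ (\ref{25}).

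For the uniform Hausdorff estimate I would set $\delta_k=d(W_k,W)=\vvvert\pi_{W_k}-\pi_W\vvvert\to0$. The crucial point is that the orthogonal projections $\pi_W$ and $\pi_{W_k}$ serve \emph{simultaneously, for every $\pi$}, as near-identity maps between $W\cap B_{|\cdot|}$ and $W_k\cap B_{|\cdot|}$: since $\pi_{W_k}$ is the identity on $W_k$ and $\pi_W$ the identity on $W$, for $z\in W_k$ one has $z-\pi_W(z)=(\pi_{W_k}-\pi_W)(z)$, so $|z-\pi_W(z)|\le\delta_k|z|$, and symmetrically $|y-\pi_{W_k}(y)|\le\delta_k|y|$ for $y\in W$; moreover $\pi_W,\pi_{W_k}$ do not increase $|\cdot|$, so they map $W_k\cap B_{|\cdot|}$ into $W\cap B_{|\cdot|}$ and vice versa. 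Applying a fixed $\pi$ with $\vvvert\pi\vvvert\le n$ and running over $z\in W_k\cap B_{|\cdot|}$ (resp. $y\in W\cap B_{|\cdot|}$) yields the two inclusions $\pi(W_k\cap B_{|\cdot|})\subset\pi(W\cap B_{|\cdot|})+\bB_{|\cdot|}(0,n\delta_k)$ and $\pi(W\cap B_{|\cdot|})\subset\pi(W_k\cap B_{|\cdot|})+\bB_{|\cdot|}(0,n\delta_k)$, hence
\begin{equation*}
\rmdist_{\calH}\big(\pi(W_k\cap B_{|\cdot|}),\,\pi(W\cap B_{|\cdot|})\big)\le n\,\delta_k
\end{equation*}
with a bound independent of $\pi$; and both sets are nonempty compact convex subsets of $\rmim\pi$ contained in $\bB_{|\cdot|}(0,n)$.

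Next I would dispose of the case $\rmrank\pi\le m-1$, in which both measures vanish, and for $\rmrank\pi=m$ put $V=\rmim\pi\in\bG_m(X)$ and use $\calH^m_{\|\cdot\|}\hel V=\psi(V)\,\calH^m_{|\cdot|}\hel V$ with $\psi(V)\le C:=\max_{\bG_m(X)}\psi<\infty$. It then remains to establish: there is $\omega=\omega_{m,n}$ with $\omega(\eta)\to0$ as $\eta\to0^+$ such that any two nonempty compact convex subsets $E,E'$ of an $m$-dimensional Euclidean space, both contained in a ball of radius $n$ and with $\rmdist_{\calH}(E,E')\le\eta$, satisfy $|\calH^m_{|\cdot|}(E)-\calH^m_{|\cdot|}(E')|\le\omega(\eta)$. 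As all $m$-dimensional Euclidean spaces are isometric this may be checked in $\ell_2^m$, where it follows by a routine compactness argument: if it failed there would be $\eta_j\to0$ and convex bodies $E_j,E'_j$ in the fixed ball with $\rmdist_{\calH}(E_j,E'_j)\le\eta_j$ but $|\calH^m_{|\cdot|}(E_j)-\calH^m_{|\cdot|}(E'_j)|\ge c>0$; by the Blaschke selection theorem pass to a subsequence with $E_j\to E$ and $E'_j\to E'$ in Hausdorff distance, necessarily $E=E'$, and the continuity of Haar measure on compact convex sets with respect to Hausdorff distance, \cite[3.2.36]{GMT}, forces $\calH^m_{|\cdot|}(E_j)\to\calH^m_{|\cdot|}(E)$ and $\calH^m_{|\cdot|}(E'_j)\to\calH^m_{|\cdot|}(E')=\calH^m_{|\cdot|}(E)$, a contradiction. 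Combining everything, $\big|\calH^m_{\|\cdot\|}(\pi(W_k\cap B_{|\cdot|}))-\calH^m_{\|\cdot\|}(\pi(W\cap B_{|\cdot|}))\big|\le C\,\omega(n\,d(W_k,W))$ uniformly in $\pi$, and the right-hand side tends to $0$.

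The main obstacle is the uniformity in the first step: one must bound the Hausdorff distance between the two \emph{projected} sets by $d(W_k,W)$ with a constant independent of $\pi$, and the mechanism that makes this possible is precisely that the \emph{single} pair of projections $\pi_W,\pi_{W_k}$ does the job for all $\pi$ at once. Everything afterwards is standard convex geometry together with results already available in the paper.
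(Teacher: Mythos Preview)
Your proof is correct. The skeleton is the same as the paper's: both arguments first bound the Hausdorff distance between $\pi(W_k\cap B_{|\cdot|})$ and $\pi(W\cap B_{|\cdot|})$ by $n$ times a quantity depending only on $d(W_k,W)$, then convert $\calH^m_{\|\cdot\|}$ to $\calH^m_{|\cdot|}$ via $\psi$, and finally control the difference of Euclidean $m$-measures of two nearby convex bodies in a fixed ball. Your treatment of the first step is in fact more detailed than the paper's, which simply records $\rmdist_\calH(\pi(W_k\cap B_{|\cdot|}),\pi(W\cap B_{|\cdot|}))\le n\,\rmdist_\calH(W_k\cap B_{|\cdot|},W\cap B_{|\cdot|})$ and asserts that the right-hand side tends to zero.

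The genuine difference lies in the last step. The paper invokes Steiner's formula $\calH^m_{|\cdot|}(\bB_{|\cdot|}(C,\delta))=\calH^m_{|\cdot|}(C)+\sum_{j=0}^{m-1}\delta^{m-j}\balpha(m-j)\zeta^j(C)$ and bounds each quermassintegral $\zeta^j(C)$ on convex sets contained in a ball of radius $n$; this yields an explicit \emph{linear} modulus $\left|\calH^m_{|\cdot|}(C)-\calH^m_{|\cdot|}(C')\right|\le\delta\,\bc(m,n)$ for $\delta\le1$. You instead obtain a (non-explicit) uniform modulus $\omega$ via Blaschke selection together with the continuity of Haar measure on convex bodies \cite[3.2.36]{GMT}. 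Your route is softer and avoids Steiner's formula altogether, at the cost of losing the quantitative rate; since only the limit statement is needed here, nothing is lost. The paper's route, by contrast, gives a rate that could be reused if one ever wanted an effective version.
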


\begin{proof}
We show how this is a consequence of Steiner's formula. Let $\pi \in \rmHom_m(X,X)$ and choose $W \in \bG_m(X)$ such that $\rmim \pi \subset W$. Through the choice of an orthonormal basis we identify $W$ with $\ell_2^m$. For any convex set $C \subset W$ and $\delta > 0$ we have
\begin{equation}
\label{eq.steiner}
\calH^m_{|\cdot|}(\bB_{|\cdot|}(C,\delta)) = \calH^m_{|\cdot|}(C) + \sum_{k=0}^{m-1} \delta^{m-k} \balpha(m-k) \zeta^k(C) 
\end{equation}
where 
\begin{equation*}
\zeta^k(C) = \bbeta_1(m,k)^{-1} \int_{\bO^*(m,k)} \calL^k(p(C)) d\btheta_{m,k}(p) \,,
\end{equation*}
see for instance \cite[3.2.35]{GMT}. Now if $n$ is integer so that $C \subset W \cap \bB_{|\cdot|}(0,n)$ then $p(C) \subset \R^k \cap \bB(0,n)$ for all $p \in \bO^*(m,k)$ and therefore $\zeta^k(C) \leq \bbeta_1(m,k)^{-1}\balpha(k)n^k$. Thus if $C,C' \subset W \cap \bB_{|\cdot|}(0,n)$ are compact convex and $\delta = \rmdist_\calH(C,C') \leq 1$ then it follows from \eqref{eq.steiner} that
\begin{equation*}
\left| \calH^m_{|\cdot|}(C) - \calH^m_{|\cdot|}(C') \right| \leq \delta \bc(m)
\end{equation*}
where
\begin{equation*}
\bc(m) = \sum_{k=0}^{m-1} \bbeta_1(m,k)^{-1}\balpha(m-k)\balpha(k)n^k \,.
\end{equation*}
Now we let $C_k =  \pi( W_k \cap B_{|\cdot|} )$, $C =  \pi( W \cap B_{|\cdot|} )$, $\delta_k = \rmdist_\calH(C_k,C)$ and we assume that $\vvvert \pi \vvvert \leq n$. Since $d(W_k,W) \to 0$ we have $\delta_k \leq n \rmdist (W_k \cap B_{|\cdot|},W \cap B_{|\cdot|} ) \to 0$ as well. Finally,
\begin{multline*}
 \Bigg| \calH^m_{\|\cdot\|} \left( \pi( W_k \cap B_{|\cdot|} )\right) - \calH^m_{\|\cdot\|} \left( \pi( W \cap B_{|\cdot|} )\right) \Bigg| \\
 =  \psi(W) \Bigg| \calH^m_{|\cdot|} \left( \pi( W_k \cap B_{|\cdot|} )\right) - \calH^m_{|\cdot|} \left( \pi( W \cap B_{|\cdot|} )\right) \Bigg| \\
 \leq C \bc(m)n \rmdist (W_k \cap B_{|\cdot|},W \cap B_{|\cdot|} )
\end{multline*}
if $k$ is large enough (for $n \delta_k \leq 1$), where $C = \max \{ \psi(W) : W \in \bG_m(X) \}$.
\end{proof}

\begin{Empty}[Space of probability measures]
\label{50}
We recall that $\rmHom_m(X,X)$, the collection of linear maps $X \to X$ of rank at most $m$, is a closed subspace of $\rmHom(X,X)$ and therefore a Polish space. The set $\calM_1(\rmHom_m(X,X))$ consisting of those probability Borel measures on $\rmHom_m(X,X)$ is itself a Polish space, equipped with the so called topology of weak convergence of measures, \cite[Chapter II \S 6]{PARTHASARATHY}.
\end{Empty}

\begin{Theorem}
\label{59}
The set
\begin{multline*}
\calE = \bG_m(X) \times \calM_1(\rmHom_m(X,X)) \cap \bigg\{ (W,\mu) : \mu \text{ is a density contractor} \\ \text{on $W$ with respect to $\calH^m_{\|\cdot\|}$} \bigg\}
\end{multline*}
is Borel.
\end{Theorem}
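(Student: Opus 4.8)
The plan is to exhibit $\calE$ as a countable intersection of sets that are visibly Borel, after translating the definition of a density contractor into conditions depending measurably on the pair $(W,\mu)$.

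Fix once and for all a countable dense subset $\{V_i : i \in \bbN\}$ of the compact metric space $\bG_m(X)$, and for $V \in \bG_m(X)$ write $A_V = V \cap B_{|\cdot|}$ for the Euclidean unit ball of $V$, which is compact, convex, with $0 < \calH^m_{\|\cdot\|}(A_V) = \psi(V)\balpha(m) < \infty$. Set $h(V,\pi) = \calH^m_{\|\cdot\|}(\pi(A_V))$. By \ref{46}, applied with the particular choice $A = A_V$, a Borel probability measure $\mu$ on $\rmHom_m(X,X)$ is a density contractor on $W$ if and only if: (i) $\mu\big(\rmHom_m(X,X) \cap \{\pi : \rmim\pi \not\subset W\}\big)=0$; (ii) $\int h(V,\pi)\,d\mu(\pi) \leq \psi(V)\balpha(m)$ for every $V \in \bG_m(X)$; and (iii) $\int h(W,\pi)\,d\mu(\pi) = \psi(W)\balpha(m)$.

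Next I would record the measurability inputs. The function $h$ is continuous on $\bG_m(X) \times \rmHom_m(X,X)$: continuity in $\pi$ for fixed $V$ is \ref{41}(2) since $A_V$ is compact convex, and joint continuity then follows from \ref{58}, which bounds the oscillation of $\pi \mapsto h(V,\pi)$ in the variable $V$ uniformly over bounded $\pi$. Also $\{(W,\pi) : \rmim\pi \subset W\}$ is the zero set of the continuous map $(W,\pi) \mapsto \vvvert (\rmid_X - \pi_W)\circ\pi \vvvert$ (recall from \ref{grass} that $W \mapsto \pi_W$ is continuous), hence closed, so its complement $N$ in $\bG_m(X) \times \rmHom_m(X,X)$ is Borel. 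I then invoke the standard fact that for Polish spaces $Y,Z$ and a Borel function $\varphi : Y \times Z \to [0,\infty]$, the map $(y,\mu) \mapsto \int_Z \varphi(y,z)\,d\mu(z)$ is Borel on $Y \times \calM_1(Z)$; this is proved by a monotone class argument, the base case being $\varphi = f_1 \otimes f_2$ with $f_1, f_2$ bounded continuous, for which $(y,\mu) \mapsto f_1(y)\int f_2\,d\mu$ is continuous. Applying this to $\varphi = h$ and to $\varphi = \ind_N$, and using the continuity of $\psi$ (\ref{25}), we see that each of (i) and (iii) cuts out a Borel subset of $\bG_m(X) \times \calM_1(\rmHom_m(X,X))$.

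The one step requiring an extra idea is (ii), which is a priori an intersection of Borel sets indexed by the uncountable space $\bG_m(X)$. Here I would use lower semicontinuity and compactness. For fixed $\mu$, the function $g_\mu : V \mapsto \int h(V,\pi)\,d\mu(\pi)$ is lower semicontinuous: for $V_k \to V$, since $h$ is continuous we have $h(V_k,\pi) \to h(V,\pi)$ for every $\pi$, and $h \geq 0$, so Fatou's lemma gives $\liminf_k g_\mu(V_k) \geq \int h(V,\pi)\,d\mu(\pi) = g_\mu(V)$. Hence $V \mapsto g_\mu(V) - \psi(V)\balpha(m)$ is lower semicontinuous on the compact space $\bG_m(X)$, so the set where it is strictly positive is open; since a nonempty open subset of $\bG_m(X)$ meets the dense set $\{V_i\}$, condition (ii) holds if and only if $g_\mu(V_i) \leq \psi(V_i)\balpha(m)$ for every $i \in \bbN$. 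This realizes (ii) as a countable intersection of Borel sets, and therefore $\calE$, being the intersection of the Borel sets defined by (i), (iii) and the countable family just obtained, is Borel. The crux of the argument — and the only genuine obstacle — is precisely this reduction of the universal quantifier over the Grassmannian to a countable conjunction, made possible by the lower semicontinuity of $g_\mu$ together with the compactness of $\bG_m(X)$.
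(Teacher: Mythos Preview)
Your proof is correct and follows the same decomposition as the paper (via \ref{46}, into the support condition, the inequality over all $V$, and the equality at $W$), but the execution differs in an instructive way.

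The paper proves directly that $\calE_a$ (your condition (i)) and each $\calE_{b,V}$ (your condition (ii) for a single $V$) are \emph{closed}, not merely Borel: for $\calE_{b,V}$ this amounts to showing that $\mu \mapsto \int f_{V \cap B_{|\cdot|}}\,d\mu$ is lower semicontinuous in $\mu$, which follows from the monotone convergence trick with the truncations $\min\{n,f_{V\cap B_{|\cdot|}}\}$. Since arbitrary intersections of closed sets are closed, $\calE_b = \cap_V \calE_{b,V}$ is closed outright, and what you identify as ``the only genuine obstacle'' --- reducing the universal quantifier over $\bG_m(X)$ to a countable conjunction --- never arises. Your density argument via lower semicontinuity of $V \mapsto g_\mu(V)$ is perfectly valid, but it is doing extra work to compensate for having proved only Borel-ness rather than closedness of each $\calE_{b,V}$. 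For condition (iii) the paper again works directly, proving $\Upsilon(W,\mu)=\int h(W,\pi)\,d\mu(\pi)$ is lower semicontinuous by writing it as $\sup_n \Upsilon_n$ with each $\Upsilon_n$ continuous (cut-offs $\chi_n$ plus \ref{58}); you instead package everything into the general lemma on Borel measurability of $(y,\mu)\mapsto\int\varphi(y,\cdot)\,d\mu$. Both routes work; the paper's buys slightly sharper topological information, while yours is more portable.
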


\begin{proof}
We define 
\begin{equation*}
\calE_a = \{ (W,\mu) : \mu \text{ is supported in } \rmHom(X,W) \}
\end{equation*}
and 
\begin{multline*}
\calE_b = \bigg\{ (W,\mu) : \text{for every } V \in \bG_m(X) \text{ one has } \\
\int_{\rmHom_m(X,X)} \calH^m_{\|\cdot\|}\left( \pi(V \cap B_{|\cdot|} )\right) d\mu(\pi) \leq \calH^m_{\|\cdot\|}\left( V \cap B_{|\cdot|} \right) \bigg\}
\end{multline*}
and
\begin{equation*}
\calE_b = \left\{ (W,\mu) :
\int_{\rmHom_m(X,X)} \calH^m_{\|\cdot\|}\left( \pi(W \cap B_{|\cdot|} )\right) d\mu(\pi) = \calH^m_{\|\cdot\|}\left( W \cap B_{|\cdot|} \right) \right\} \,.
\end{equation*}
It follows from \ref{46} that $\calE = \calE_{a} \cap \calE_b  \cap \calE_c$.
\par 
{\it The set $\calE_a$ is closed}. Let $(W_1,\mu_1),(W_2,\mu_2),\ldots$ be members of $\calE_a$ such that $(W_k,\mu_k) \to (W,\mu)$ in $\bG_m(X) \times \calM_1(\rmHom_m(X,X))$ as $k \to \infty$. We observe that $\rmHom_m(X,X) \setminus \rmHom(X,W) = \cup_{j=1}^\infty \frU_j$ where
\begin{equation*}
\frU_j = \rmHom_m(X,X) \cap \left\{ \pi : \rmdist \left( \pi , \rmHom(X,W) \right) > \frac{\vvvert \pi  \vvvert}{j} \right\} \,.
\end{equation*}
Since $\vvvert \pi_{W_k} - \pi_W \vvvert \to 0$ by assumption, it is easily seen that for each fixed $j=1,2,\ldots$ there exists an integer $k(j)$ such that $\rmHom(X,W_k) \cap \frU_j = \emptyset$ whenever $k \geq k(j)$. For such $k \geq k(j)$ it follows that $\mu_k(\frU_j)=0$. Since $\frU_j$ is open we have $\mu(\frU_j) \leq \liminf_k \mu_k(\frU_j) = 0$. It follows that $\rmsupp \mu \subset \rmHom(X,W)$.
\par 
{\it The set $\calE_b$ is closed}. Given $V \in \bG_m(X)$ we define
\begin{equation*}
\calE_{b,V }= \left\{ (W,\mu) : 
\int_{\rmHom_m(X,X)} \calH^m_{\|\cdot\|}\left( \pi(V \cap B_{|\cdot|} )\right) d\mu(\pi) \leq \calH^m_{\|\cdot\|}\left( V \cap B_{|\cdot|} \right) \right\} \,.
\end{equation*}
Since clearly $\calE_b = \cap_{V \in \bG_m(X)} \calE_{b,V}$ it suffices to show that each $\calE_{b,V}$ is closed. Define $\Upsilon_V : \bG_m(X) \times \calM_1(\rmHom_m(X,X)) \to \R$ by the formula
\begin{equation*}
\Upsilon_V(W,\mu) = \int_{\rmHom_m(X,X)} \calH^m_{\|\cdot\|}\left( \pi(V \cap B_{|\cdot|} )\right) d\mu(\pi)
\end{equation*}
(thus $\Upsilon_V$ does depend upon its first variable $W$). As before define $f_{V \cap B_{|\cdot|}} : \rmHom_m(X,X) \to \R$ by $f_{V \cap B_{|\cdot|}}(\pi) = \calH^m_{\|\cdot\|}\left(\pi(V \cap B_{|\cdot|})\right)$. It follows from \ref{41}(2) that for each $n=1,2,\ldots$ the function $\min\{n,f_{V \cap B_{|\cdot|}}\}$ is bounded and continuous. Given $(W_1,\mu_1),(W_2,\mu_2),\ldots$ members of $\bG_m(X) \times \calM_1(\rmHom_m(X,X))$ such that $(W_k,\mu_k) \to (W,\mu)$ we infer that, for each $n=1,2,\ldots$,
\begin{multline*}
\int_{\rmHom_m(X,X)} \min\{n,f_{V \cap B_{|\cdot|}}\} d\mu = \lim_k \int_{\rmHom_m(X,X)} \min\{n,f_{V \cap B_{|\cdot|}}\} d\mu_k \\
\leq \liminf_k \int_{\rmHom_m(X,X)} f_{V \cap B_{|\cdot|}} d\mu_k = \liminf_k \Upsilon_V(W_k,\mu_k) \,.
\end{multline*}
Letting $n\to\infty$ it then follows from the monotone convergence theorem that
\begin{multline*}
\Upsilon_V(W,\mu) = \int_{\rmHom_m(X,X)} f_{V \cap B_{|\cdot|}} d\mu \\
=\lim_n \int_{\rmHom_m(X,X)} \min\{n,f_{V \cap B_{|\cdot|}}\} d\mu \leq \liminf_k \Upsilon_V(W_k,\mu_k)\,.
\end{multline*}
This shows that $\Upsilon_V$ is lower semicontinuous and in turn that $\calE_{b,V}$ is closed.
\par 
{\it The set $\calE_c$ is Borel.} We define $\Upsilon : \bG_m(X) \times \calM_1(\rmHom_m(X,X)) \to \R$ by the formula
\begin{equation*}
\Upsilon(W,\mu) = \int_{\rmHom_m(X,X)} \calH^m_{\|\cdot\|}\left( \pi(W \cap B_{|\cdot|} )\right) d\mu(\pi)
\end{equation*}
so that 
\begin{equation*}
\calE_c = \bG_m(X) \times \calM_1(\rmHom_m(X,X)) \cap \left\{ (W,\mu) : \Upsilon(W,\mu) = \calH^m_{\|\cdot|}\left( W \cap B_{|\cdot|} \right) \right\} \,.
\end{equation*}
Since $\calH^m_{\|\cdot|}\left( W \cap B_{|\cdot|} \right) = \psi(W) \balpha(m)$ is continuous according to \ref{25} the conclusion will follow from the lower semicontinuity of $\Upsilon$ which we now establish. We choose $\chi_1,\chi_2,\ldots$ a nondecreasing sequence of cut-off functions on $\rmHom_m(X,X)$ as in \eqref{eq.cutoff}. With each $n=1,2,\ldots$ we then associate
\begin{equation*}
\Upsilon_n(W,\mu) = \int_{\rmHom_m(X,X)} \chi_n(\pi)\calH^m_{\|\cdot\|}\left( \pi(W \cap B_{|\cdot|} )\right) d\mu(\pi) \,.
\end{equation*}
As $\Upsilon = \sup_{n=1,2,\ldots} \Upsilon_n$ according to the monotone convergence theorem, it is enough to show that each $\Upsilon_n$ is continuous. Fix $n=1,2,\ldots$. Let $(W_1,\mu_1),(W_2,\mu_2),\ldots$ be members of $\bG_m(X) \times \calM_1(\rmHom_m(X,X))$ such that $(W_k,\mu_k) \to (W,\mu)$. It follows from \ref{41}(2) that $\chi_n f_{W_1 \cap B_{|\cdot|}},\chi_n f_{W_2 \cap B_{|\cdot|}},\ldots$ is a sequence in $C_b(\rmHom_m(X,X))$ and it follows from \ref{58} that it converges uniformly to $\chi_n f_{W \cap B_{|\cdot|}}$. The weak* convergence of $\mu_1,\mu_2,\ldots$ clearly implies its uniform convergence on compact subsets of $C_b(\rmHom_m(X,X))$, thus
\begin{multline*}
\Upsilon_n(W,\mu) = \int_{\rmHom_m(X,X)} \chi_n  f_{W \cap B_{|\cdot|}} d\mu \\
= \lim_k \int_{\rmHom_m(X,X)} \chi_n  f_{W_k \cap B_{|\cdot|}} d\mu_k = \lim_k \Upsilon_n(W_k,\mu_k) 
\end{multline*} 
\end{proof}

\begin{Theorem}
\label{510}
Under the assumption \ref{5.assumption} there exists a choice of density contractors $\bmu : \bG_m(X) \to \calM_1(\rmHom_m(X,X))$ which is universally measurable.
\end{Theorem}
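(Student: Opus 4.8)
The plan is to produce $\bmu$ as a universally measurable uniformization of the Borel set $\calE$ of Theorem~\ref{59}, by means of the Jankov--von Neumann selection theorem.

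First I would observe that the relevant product space is Polish. By \ref{50} the space $\calM_1(\rmHom_m(X,X))$ is Polish, and by \ref{grass} the Grassmannian $\bG_m(X)$ is a compact metric space, hence Polish; therefore $\bG_m(X) \times \calM_1(\rmHom_m(X,X))$ is Polish. By Theorem~\ref{59}, the set
\[
\calE = \bigl\{ (W,\mu) : \mu \text{ is a density contractor on } W \text{ with respect to } \calH^m_{\|\cdot\|} \bigr\}
\]
is a Borel, hence analytic, subset of this product. The standing assumption \ref{5.assumption} --- that $(X,\|\cdot\|)$ admits density contractors of dimension $m$ --- is exactly the statement that the first-coordinate projection of $\calE$ is all of $\bG_m(X)$, i.e.\ that every section $\calE_W = \{ \mu : (W,\mu) \in \calE\}$ is nonempty.

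Next I would invoke the Jankov--von Neumann uniformization theorem: an analytic subset of a product of two Polish spaces whose projection onto the first factor is the whole space admits a uniformizing map measurable with respect to the $\sigma$-algebra generated by the analytic sets, and such a map is in particular universally measurable. Applying this to $\calE$ yields a universally measurable $\bmu : \bG_m(X) \to \calM_1(\rmHom_m(X,X))$ with $(W,\bmu(W)) \in \calE$ for every $W$; by definition this $\bmu$ is a choice of density contractors in dimension $m$, which is the assertion.

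I do not expect any serious obstacle at this stage, since the substantive work --- the Borel measurability of $\calE$ --- has been carried out in \ref{59}; what remains is only the correct citation of a classical selection theorem. The single point deserving comment is why one does not get a Borel, rather than merely universally measurable, selection: each section $\calE_W$ is in fact closed in $\calM_1(\rmHom_m(X,X))$ --- combine the closedness of the sections of $\calE_a$ and $\calE_b$ with the fact that on $\calE_b$ the inequality defining $\calE_c$ already forces $\Upsilon(W,\cdot) \leq \psi(W)\balpha(m)$, so that $\calE_c$ imposes only the closed condition $\Upsilon(W,\cdot) \geq \psi(W)\balpha(m)$ --- but the closed-valued multifunction $W \mapsto \calE_W$ is only seen to be analytically measurable, the sets $\{ W : \calE_W \cap \frU \neq \emptyset\}$ being projections of Borel sets rather than Borel sets, so the Kuratowski--Ryll-Nardzewski theorem does not apply directly and universal measurability is the natural conclusion.
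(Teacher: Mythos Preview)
Your proposal is correct and follows essentially the same route as the paper: both observe that $\bG_m(X)$ and $\calM_1(\rmHom_m(X,X))$ are Polish, that $\calE$ is Borel by \ref{59} with full projection onto $\bG_m(X)$ by assumption~\ref{5.assumption}, and then apply the (Jankov--)von Neumann selection theorem to obtain a universally measurable uniformization. Your additional remarks on why only universal measurability is obtained go beyond what the paper states but are consistent with its argument.
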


\begin{proof}
Since $\bG_m(X)$ and $\calM_1(\rmHom_m(X,X))$ are Polish spaces, and $\calE$ is Borel according to \ref{59}, the result is a consequence of J. von Neumann's selection theorem, \cite[5.5.2]{SRIVASTAVA}.
\end{proof}

\section{New type of Gross mass}

\begin{Empty}[Gross mass]
\label{61}
In this section we work again under the same assumption as \ref{5.assumption}: $\bmu$ is a choice of density contractors of dimension $m$. The corresponding Gross measure $\calG^m_{\bmu}$ is defined in the previous section. Given $T \in \calR_m(X,G)$ we define its {\em Gross mass} as
\begin{equation*}
\calM_G(T) = \int_{\rmset_m \|T\|} \lno \bg(x) \rno d\calG^m_{\bmu}(x) \,.
\end{equation*} 
It follows at once from \ref{57}, approximation by simple functions and the monotone convergence theorem that
\begin{equation}
\label{eq.thu.14}
\calM_G(T) = \calM_H(T) \,.
\end{equation}
We also define
\begin{equation*}
\zeta_{\bmu}(T) = \underset{W \in \bG_m(X)}\sup \int_{\rmHom_m(X,X)} \calM_H(\pi_\#T) d\bmu_W(\pi) \,.
\end{equation*}
We recall that integrand in the above formula is Borel measurable, and that 
\begin{equation}
\label{eq.thu.16}
\zeta_{\bmu}(T) \leq \calM_H(T) \,,
\end{equation}
 both according to \ref{411}. We also notice that $\zeta_{\bmu}(T_1+T_2) \leq \zeta_{\bmu}(T_1) + \zeta_{\bmu}(T_2)$ and $\zeta_{\bmu}(T)=\zeta_{\bmu}(-T)$.
\end{Empty}

\begin{Proposition}
\label{62}
$\zeta_{\bmu} : \calR_m(X,G) \to \R$ is lower semicontinuous with respect to $\calF$ convergence.
\end{Proposition}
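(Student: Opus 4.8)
The plan is to reduce the lower semicontinuity of $\zeta_{\bmu}$ to the ($\calF$-)continuity of each single-$\pi$ functional $\calM_H(\pi_\#\,\cdot\,)$, then integrate against $\bmu_W$ by Fatou's lemma, and take a supremum over $W$ only at the very end.

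Fix $T,T_1,T_2,\ldots\in\calR_m(X,G)$ with $\calF(T-T_j)\to 0$; we must show $\zeta_{\bmu}(T)\le\liminf_j\zeta_{\bmu}(T_j)$. First I would fix an arbitrary $W\in\bG_m(X)$ and work with $\bmu_W$, which is supported in $\rmHom(X,W)$. For $\pi\in\rmHom(X,W)$ one has $\rmim\pi\subset W$, so $\pi_\#T$ and every $\pi_\#T_j$ are rectifiable $m$-chains supported in the $m$-dimensional subspace $W$; for such chains $\calM_H(S)=\psi(W)\calM_{H,|\cdot|}(S)=\psi(W)\calF(S)$ (the identity $\calM_{H,|\cdot|}=\calF$ for top-dimensional chains carried by $W$, exactly as used in the proof of \ref{411}). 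On the other hand, the homotopy/pushforward estimate $\calF(\pi_\#A-\pi_\#B)\le\max\{\vvvert\pi\vvvert^m,\vvvert\pi\vvvert^{m+1}\}\,\calF(A-B)$ (again as in the proof of \ref{411}) gives $\calF(\pi_\#T_j-\pi_\#T)\to 0$ for each fixed $\pi$. Combining the two, $\calM_H(\pi_\#T_j)=\psi(W)\calF(\pi_\#T_j)\to\psi(W)\calF(\pi_\#T)=\calM_H(\pi_\#T)$ for every $\pi\in\rmHom(X,W)$, hence in particular $\bmu_W$-almost everywhere.

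Next I would invoke \ref{411} for the Borel measurability (and nonnegativity) of the functions $\pi\mapsto\calM_H(\pi_\#T)$ and $\pi\mapsto\calM_H(\pi_\#T_j)$, so that Fatou's lemma applies and yields
\begin{equation*}
\int_{\rmHom_m(X,X)}\calM_H(\pi_\#T)\,d\bmu_W(\pi)\le\liminf_j\int_{\rmHom_m(X,X)}\calM_H(\pi_\#T_j)\,d\bmu_W(\pi)\le\liminf_j\zeta_{\bmu}(T_j),
\end{equation*}
the last step because the integral over $\bmu_W$ is by definition bounded above by the supremum $\zeta_{\bmu}(T_j)$ over all $W'\in\bG_m(X)$. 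Taking the supremum over $W\in\bG_m(X)$ on the left-hand side (the right-hand side being independent of $W$) gives $\zeta_{\bmu}(T)\le\liminf_j\zeta_{\bmu}(T_j)$, which is the assertion. Note also that $\zeta_{\bmu}(T)\le\calM_H(T)<\infty$ by \eqref{eq.thu.16}, so $\zeta_{\bmu}$ is genuinely $\R$-valued.

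There is no serious obstacle; the only delicate points are the two facts imported from the proof of \ref{411} — that $\calM_H$ coincides with $\psi(W)\calF$ on chains carried by the fixed plane $W$ (so that $S\mapsto\calM_H(\pi_\#S)$ is actually $\calF$-continuous, not merely lower semicontinuous), and the quantitative flat-norm continuity of $\pi_\#$ for a linear map $\pi$ of possibly large operator norm — together with the bookkeeping of passing the supremum over $W$ outside the $\liminf_j$.
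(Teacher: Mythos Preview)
Your proof is correct and follows essentially the same approach as the paper's: fix $W$ and $\pi$, use the identity $\calM_H=\psi(W)\calF$ for top-dimensional chains in $W$ together with the flat-norm continuity of $\pi_\#$ to get pointwise convergence of $\calM_H(\pi_\#T_j)$, then apply Fatou and take the supremum over $W$. If anything, your write-up is more careful than the paper's, which omits the factor $\psi(W)$ and does not explicitly invoke the measurability from \ref{411} before applying Fatou.
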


\begin{proof}
Let $T,T_1,T_2,\ldots$ be members of $\calR_m(X,G)$ such that $\calF(T-T_j) \to 0$ as $j \to \infty$.
Fix $W \in \bG_m(X)$ and $\pi \in \rmHom(X,W)$. Since $\calF(\pi_\# T - \pi_\# T_j) \to 0$ and $\calF(S) = \calM_H(S)$ for each $S \in \calR_m(X,G)$ we infer that $\calM_H(\pi_\#T) = \lim_j \calM_H(\pi_\# T_j)$. It thus ensues from Fatou's Lemma that
\begin{equation*}
\int_{\rmHom_m(X,X)} \calM_H(\pi_\#T)d\bmu_W(\pi) \leq \liminf_j \int_{\rmHom_m(X,X)} \calM_H(\pi_\#T_j)d\bmu_W(\pi) \,.
\end{equation*}
Taking the supremum over $W \in \bG_m(X)$ on both sides yields the conclusion.
\end{proof}

\begin{Proposition}
\label{63}
Let $T \in \calR_m(X,G)$ and $0 < \veps < 1$. For $\calH^m_{\|\cdot\|}$ almost every $x \in \rmset_m \|T\|$ there then exist $r(x) > 0$ with the following property. For every $0 < r \leq r(x)$ one has
\begin{equation*}
\zeta_{\bmu}\left( T \hel \bB_{\|\cdot\|}(x,r) \right) \geq (1-\veps) \calM_H \left( T \hel \bB_{\|\cdot\|}(x,r) \right) \,.
\end{equation*}
\end{Proposition}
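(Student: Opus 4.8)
The plan is to compare $\zeta_{\bmu}$ and $\calM_H$ on the rescalings of $T$ about $x$, exploiting the lower semicontinuity of $\zeta_{\bmu}$ established in \ref{62} together with the fact that $\zeta_{\bmu}$ agrees with $\calM_H$ on chains supported in a single $m$ plane, which follows from \ref{411} and \eqref{eq.thu.16}. First I would isolate the good points. Write $A=\rmset_m\|T\|$; it is countably $(\calH^m_{\|\cdot\|},m)$ rectifiable, so by \cite[3.2.29]{GMT} there are pairwise disjoint Borel sets $A_1,A_2,\ldots$ with $\calH^m_{\|\cdot\|}\big(A\setminus\bigcup_i A_i\big)=0$ and each $A_i$ contained in an $m$ dimensional $C^1$ submanifold $M_i$ of $X$. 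Using the density theory for Hausdorff measures and the Lebesgue point theorem (the $G$ orientation $\bg$ being a measurable, essentially separably valued map) one checks that for $\calH^m_{\|\cdot\|}$ almost every $x$ there is an index $i$ with $x\in A_i$ enjoying: $\rmap\rmTan(\calH^m_{\|\cdot\|}\hel A,x)=\rmTan(M_i,x)=:W_0$; the measure $\|T\|\hel(A\setminus A_i)$ has vanishing $m$ dimensional density at $x$; $\calH^m_{\|\cdot\|}\big((M_i\setminus A_i)\cap\bB_{\|\cdot\|}(x,r)\big)=o(r^m)$; and $x$ is a Lebesgue point of $\bg$ relative to $\calH^m_{\|\cdot\|}\hel A_i$. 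I claim the conclusion holds at every such $x$.

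Fix such an $x$, put $\bB_r=\bB_{\|\cdot\|}(x,r)$, let $\eta_r:X\to X$ be the homothety $\eta_r(y)=(y-x)/r$ and $S_r=(\eta_r)_\#(T\hel\bB_r)$, and let $\Theta_0\in\calR_m(X,G)$ be $\bg(x)$ times the suitably oriented set $W_0\cap\bB_{\|\cdot\|}(0,1)$, a chain supported in $W_0$. The argument rests on three facts: \emph{(i)} $\calM_H(S_r)\to\calM_H(\Theta_0)$ as $r\to0$; \emph{(ii)} $\calF(S_r-\Theta_0)\to0$ as $r\to0$; \emph{(iii)} $\zeta_{\bmu}(\Theta_0)=\calM_H(\Theta_0)$. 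Fact \emph{(iii)} is immediate: applying \ref{411} to $\bmu_{W_0}$ and $\Theta_0$, whose support lies in $W_0$, gives $\int\calM_H(\pi_\#\Theta_0)\,d\bmu_{W_0}(\pi)=\calM_H(\Theta_0)$, hence $\zeta_{\bmu}(\Theta_0)\geq\calM_H(\Theta_0)$, while \eqref{eq.thu.16} gives the opposite inequality. For \emph{(i)} one writes $\calM_H(S_r)=r^{-m}\calM_H(T\hel\bB_r)=r^{-m}\int_{A\cap\bB_r}\lno\bg\rno\,d\calH^m_{\|\cdot\|}$ and lets $r\to0$: the Lebesgue point property, the density estimates above and B. Kirchheim's area formula \cite{KIR.94} (which yields $r^{-m}\calH^m_{\|\cdot\|}(M_i\cap\bB_r)\to\balpha(m)$, using also \eqref{eq.busemann.1}) show the limit is $\lno\bg(x)\rno\balpha(m)=\calM_H(\Theta_0)$.

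Fact \emph{(ii)} is the heart of the matter. One first shows, by splitting the mass of the difference over $A_i\cap M_i$, over $A\setminus M_i$ and over $M_i\setminus A_i$, and using that the $G$ orientations of $T$ and of $\bg(x)\lseg M_i\cap\bB_r\rseg$ coincide on a subset of $A_i$ of full $\calH^m_{\|\cdot\|}$ density at $x$, that
\begin{equation*}
\calM_H\big(T\hel\bB_r-\bg(x)\lseg M_i\cap\bB_r\rseg\big)=o(r^m)\,.
\end{equation*}
Since $\calF$ is controlled by $\calM_H$ and mass of an $m$ chain scales by $r^{-m}$ under $\eta_r$, this gives $\calF\big(S_r-(\eta_r)_\#(\bg(x)\lseg M_i\cap\bB_r\rseg)\big)\to0$; and $(\eta_r)_\#(\bg(x)\lseg M_i\cap\bB_r\rseg)\to\Theta_0$ in the flat norm because $\eta_r(M_i)$ converges to $W_0$ in $C^1_{\rmloc}$, which is the classical blow-up of a $C^1$ chain to its tangent plane. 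To conclude, note that for linear $\pi$ the map $\pi\circ\eta_r^{-1}$ is a translate of $r\pi$, so the translation invariance and homogeneity of $\calH^m_{\|\cdot\|}$ yield $\calM_H(\pi_\#(T\hel\bB_r))=r^m\calM_H(\pi_\#S_r)$ for every $\pi\in\rmHom_m(X,X)$, whence $\zeta_{\bmu}(T\hel\bB_r)=r^m\zeta_{\bmu}(S_r)$, and likewise $\calM_H(T\hel\bB_r)=r^m\calM_H(S_r)$. Therefore, invoking \emph{(ii)} and \ref{62}, then \emph{(iii)} and \emph{(i)},
\begin{equation*}
\liminf_{r\to0}\frac{\zeta_{\bmu}(T\hel\bB_r)}{\calM_H(T\hel\bB_r)}=\liminf_{r\to0}\frac{\zeta_{\bmu}(S_r)}{\calM_H(S_r)}\geq\frac{\zeta_{\bmu}(\Theta_0)}{\calM_H(\Theta_0)}=1\,,
\end{equation*}
and since $0<\veps<1$ the desired $r(x)>0$ exists.

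I expect the main obstacle to be fact \emph{(ii)}: proving that near a good point $T$ is, up to an error of mass $o(r^m)$, the $\bg(x)$ multiple of the piece of the $C^1$ sheet through $x$, and that this piece blows up in the flat norm to the tangent plane chain $\Theta_0$. This is precisely where the rectifiable structure of $T$ — the $\calH^m_{\|\cdot\|}$ almost everywhere existence of an approximate tangent plane carrying an approximately constant $G$ orientation — enters, and it demands the careful bookkeeping of orientations indicated above. On the level of chains this parallels the area formula estimate carried out for submanifolds in the proof of \ref{55}.
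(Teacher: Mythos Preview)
Your argument is correct and takes a genuinely different route from the paper. The paper proves \ref{63} by reprising the Jacobian computation of \ref{55}: it first treats the case $\rmsupp T\subset M$ for a $C^1$ manifold $M$, restricts to Lebesgue points of $\lno\bg\rno$, truncates to the compact set $H_\kappa\subset\rmHom(X,W)$ on which $\pi\circ F$ is injective, and then bounds $|J_m(\pi\circ F)-J_m\pi|$ via the determinant Lemma \ref{54}, arriving at \eqref{eq.thu.25}; the general case is reduced to this one by the density estimates (A), (B), (C). Your approach instead rescales: you set $S_r=(\eta_r)_\#(T\hel\bB_r)$, observe the exact scaling $\zeta_{\bmu}(T\hel\bB_r)=r^m\zeta_{\bmu}(S_r)$, establish the flat convergence $S_r\to\Theta_0$ and the mass convergence $\calM_H(S_r)\to\calM_H(\Theta_0)$, and then conclude via the lower semicontinuity of $\zeta_{\bmu}$ from \ref{62} together with the equality case of \ref{411} on the tangent plane. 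This is more conceptual and shorter, effectively reading \ref{63} as ``\ref{62} applied to blow-ups''; the price is that you must carry out the tangent-chain convergence for rectifiable $G$ chains, which in turn requires Lebesgue points of the $G$ valued orientation $\bg$ (not merely of $\lno\bg\rno$ as in the paper), hence your appeal to essential separable valuedness. The paper's approach avoids this blow-up statement and stays parallel to \ref{55}, reusing Lemma \ref{54} and the explicit estimate \eqref{eq.thu.24}; yours avoids all the matrix algebra and the $H_\kappa$ truncation.
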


\begin{proof}
Part of the proof is very similar to that of \ref{55}. We indicate how to modify the argument and leave out the details. We start with the case when $\rmsupp T \subset M$ for some $m$ dimensional submanifold $M \subset X$ of class $C^1$. We restrict to $\calH_{\|\cdot\|}^m$ almost every $x \in \rmset_m \|T\|$ by considering only those which are Lebesgue points of $\lno \bg \rno$, \cite[2.9.9]{GMT}, i.e.
\begin{equation*}
0= \lim_{r \to 0^+} \frac{1}{\balpha(m)r^m} \int_{\bB_{\|\cdot\|}(x,r)} \left| \lno \bg(\xi) \rno - \lno \bg(x) \rno \right| d\calH_{\|\cdot\|}^m \hel M (\xi)  \,.
\end{equation*}
Assuming without loss of generality that $x=0$ and that $\lno \bg(0) \rno > 0$ the proof then proceeds as that of \ref{55} with a few changes. We keep the same notation as there. If $\pi \in H_\kappa$ then for each $y \in \pi(M \cap \bB_{\|\cdot\|}(0,r))$ there is a unique $\xi = \pi^{-1}\{y\} \cap M \cap \bB_{\|\cdot\|}(0,r)$ and one has (here $\bchi : \calR_0(X,G) \to G$ denote the augmentation map)
\begin{equation*}
\begin{split}
\bg_{\pi_\# (T \shel \bB_{\|\cdot\|}(0,r))}(y) & = \bchi \left( \la T \hel \bB_{\|\cdot\|}(0,r) , \pi , y \ra \right) \\
& = \bchi \left( \bdelta_\xi\, \bg_{T \shel \bB_{\|\cdot\|}(0,r)}(\xi) \right) \\
& = \bg_{T \shel \bB_{\|\cdot\|}(0,r)}(\xi) \,.
\end{split}
\end{equation*}
Now \eqref{eq.thu.17} becomes
\begin{multline}
\label{eq.thu.19}
\calM_{H,|\cdot|}\left( \pi_\# \left(T \hel\bB_{\|\cdot\|}(0,r)\right) \right) = \int_{\pi(F(D_r))} \lno \bg_{\pi_\# (T \shel \bB_{\|\cdot\|}(0,r))}(y) \rno d\calH^m_{|\cdot|}(y) \\
= \int_{W \cap D_r} \lno \bg_{T \shel \bB_{\|\cdot\|}(0,r)}(F(x)) \rno J_m(\pi \circ F)(x)d\calH^m_{|\cdot|}(x) \,.
\end{multline}
We henceforth abbreviate $\bg = \bg_{T \shel \bB_{\|\cdot\|}(0,r)}$.
Combining \eqref{eq.thu.19} and \eqref{eq.thu.18}, we obtain the following replacement for \eqref{eq.thu.11}
\begin{equation}
\label{eq.thu.20}
\begin{split}
\bigg| & \calM_{H,|\cdot|}\left( \pi_\# \left(T \hel\bB_{\|\cdot\|}(0,r)\right) \right) - \lno \bg(0) \rno \calH^m_{|\cdot|}\left( \pi(D_r) \right) \bigg| \\
& \leq \int_{W \cap D_r} \bigg|  \lno \bg(F(x)) \rno J_m(\pi \circ F)(x) - \lno \bg(0) \rno J_m\pi(x) \bigg| d\calH^m_{|\cdot|}(x) \\
& \leq \int_{W \cap D_r} \bigg| \lno \bg(F(x)) \rno - \lno \bg(0) \rno\bigg| J_m(\pi \circ F)(x)  d\calH^m_{|\cdot|}(x) \\
& \quad \quad + \lno \bg(0)\rno \int_{W \cap D_r} \left| J_m(\pi \circ F)(x) - J_m\pi(x) \right|d\calH^m_{|\cdot|}(x) \\
& = \rmI + \rmII \,.
\end{split}
\end{equation}
The second term $\rmII$ is bounded above in the exact same way as in the proof of \ref{55}, see \eqref{eq.thu.12}, \eqref{eq.thu.13} and the lines thereafter:
\begin{multline}
\label{eq.thu.21}
\int_{H_\kappa} d\bmu(\pi) \int_{W \cap D_r} \left| J_m(\pi \circ F)(x) - J_m\pi(x) \right|d\calH^m_{|\cdot|}(x) \\ \leq \hat{\veps} \bc_{ \ref{54}}(n,m)C^2(1+C^2)^m (\rmcard \Lambda(n,m)) \balpha(m)r^m\,.
\end{multline}
In order to bound $\rmI$ from above we use our restriction to $0$ being a Lebesgue point of $\lno g(\cdot) \rno$. Choose $r_3 > 0$ small enough (according to $\hat{\veps}$ and $\kappa$) for 
\begin{equation}
\label{eq.thu.22}
\int_{\bB_{\|\cdot\|}(0,r)} \bigg| \lno \bg(\xi) \rno - \lno \bg(0) \rno \bigg| d\calH^m_{|\cdot|} \hel M (\xi ) \leq \frac{\hat{\veps}}{2^m \kappa^m} \balpha(m)r^m
\end{equation}
whenever $0 < r \leq r_3$. We notice that for $x \in D_r$,
\begin{equation*}
J_m(\pi \circ F)(x) \leq \left( \rmLip \pi \circ F \right)^m \leq \kappa^m \left( 1 + \hat{\veps} \right)^m \leq \kappa^m \left( 1 + \hat{\veps} \right)^m J_mF(x) \,.
\end{equation*}
We now apply the area formula \cite[3.2.22]{GMT},
\begin{equation}
\label{eq.thu.23}
\begin{split}
\int_{W \cap D_r}& \bigg| \lno \bg(F(x)) \rno - \lno \bg(0) \rno\bigg| J_m(\pi \circ F)(x)  d\calH^m_{|\cdot|}(x)\\
& \leq \kappa^m\left(1+\hat{\veps}\right)^m\int_{W \cap D_r}\bigg| \lno \bg(F(x)) \rno - \lno \bg(0) \rno\bigg| J_m F(x)  d\calH^m_{|\cdot|}(x)\\
& = \kappa^m\left(1+\hat{\veps}\right)^m\int_{M \cap \bB_{\|\cdot\|}(0,r)} \bigg| \lno \bg(\xi) \rno - \lno \bg(0) \rno \bigg| d\calH^m_{|\cdot|} (\xi ) \\
& \leq \hat{\veps} \balpha(m)r^m \,.
\end{split}
\end{equation}
Now multiplying both members of \eqref{eq.thu.20} by $\psi(W)$ it follows from \eqref{eq.thu.21} and \eqref{eq.thu.23} that
\begin{multline}
\label{eq.thu.25}
\Bigg|\int_{H_\kappa} \calM_{H,\|\cdot\|}\left( \pi_\# \left(T \hel\bB_{\|\cdot\|}(0,r)\right) \right) d\bmu_W(\pi) - \lno \bg(0) \rno \int_{H_\kappa} \calH^m_{\|\cdot\|}\left( \pi(D_r) \right) d\bmu_W(\pi)\Bigg| \\
\leq \hat{\veps} \bC \balpha(m)r^m
\end{multline}
where
\begin{equation*}
\bC = C \bigg( \bc_{\ref{54}}(n,m) C^2(1+c^2)^m (\rmcard \Lambda(n,m)) + \lno \bg(0) \rno C (1+C^2)^m +1 \bigg)\,.
\end{equation*}
Recalling \eqref{eq.thu.24} it ensues from \eqref{eq.thu.25} that
\begin{multline*}
\int_{\rmHom_m(X,X)} \calM_{H,\|\cdot\|}\left( \pi_\# \left(T \hel\bB_{\|\cdot\|}(0,r)\right) \right) d\bmu_W(\pi)\\ \geq \bigg( \left( 1-\hat{\veps}\right) \left( 1- C \hat{\veps} \right)^m - \bC \lno \bg(0) \rno^{-1} \hat{\veps} \bigg) \balpha(m)r^m \lno \bg(0)  \rno \,.
\end{multline*}
Choosing $r_2 > 0$ according to B. Kirchheim's area formula as at the end of the proof of \ref{55} we infer that if also $0 < r \leq r_2$ then
\begin{multline*}
\calM_H\left(T \hel \bB_{\|\cdot\|}(0,r)\right)= \int_{\bB_{\|\cdot\|}(0,r)} \lno \bg(\xi)\rno d\left(\calH^m_{\|\cdot\|} \hel M \right)(\xi) \\
\leq \lno \bg(0) \rno \calH^m_{\|\cdot\|} \left( M \cap \bB_{\|\cdot\|}(0,r) \right) +  \int_{\bB_{\|\cdot\|}(0,r)} \bigg| \lno \bg(\xi) \rno - \lno \bg(0) \rno \bigg| d\calH^m_{|\cdot|} \hel M (\xi ) \\
\leq \left( 1 + \hat{\veps} \right) \lno \bg(0) \rno \balpha(m)r^m + \hat{\veps}C^m \balpha(m)r^m \,.
\end{multline*}
Letting $r_0 = \min \{ r_1/C,r_2,r_3\}$ it becomes clear that
\begin{equation*}
\int_{\rmHom_m(X,X)} \calM_{H}\left( \pi_\# \left(T \hel\bB_{\|\cdot\|}(0,r)\right) \right) d\bmu_W(\pi) \geq (1-\veps) \calM_H\left(T \hel \bB_{\|\cdot\|}(0,r)\right)
\end{equation*}
whenever $0 < r \leq r_0$, provided $\hat{\veps}$ has been chosen small enough according to $\veps$, $n$, $m$ and $\lno \bg(0)\rno$. This readily implies that
\begin{equation*}
\zeta_{\bmu}\left( T \hel \bB_{\|\cdot\|}(0,r) \right) \geq (1-\veps) \calM_H \left( T \hel \bB_{\|\cdot\|}(0,r) \right)
\end{equation*}
for such $0 < r \leq r_0$.
\par 
We now turn to the general case. Since $T$ is concentrated on a countably $(\calH^m_{\|\cdot\|},m)$ rectifiable Borel set it follows from \cite[3.2.29]{GMT} that there exists a disjointed sequence $A_1,A_2,\ldots$ of Borel sets such that $T = T \hel \cup A_k$ and each $A_k$ is contained in some $m$ dimensional submanifold $M_k \subset X$ of class $C^1$. It suffices to show that conclusion holds for $\calH^m_{\|\cdot\|}$ almost every $x \in A_k$, with $k$ fixed. Defining $T_k = T \hel A_k)$ and $R_k = T-T_k = T \hel E_k$ where $Ek = \cup_{j \neq k} A_j$ we claim that for almost every $x \in A_k$ the following holds. Given $0 < \hat{\veps} < 1$ there exists $r(x,\hat{\veps}) > 0$ such that the following three conditions are satisfied for all $0 < r \leq r(x,\hat{\veps})$.
\begin{enumerate}
\item[(A)] $\zeta_{\bmu} \left( T_k \hel \bB_{\|\cdot\|}(x,r) \right) \geq \left( 1 - \hat{\veps} \right) \calM_H \left( T_k \hel \bB_{\|\cdot\|}(x,r) \right)$;
\item[(B)] $\calM_H \left(T \hel \bB_{\|\cdot\|}(x,r) \right) \geq \left( 1 - \hat{\veps} \right) \lno \bg_k(x) \rno \balpha(m) r^m$;
\item[(C)] $\calM_H \left(R_k \cap \bB_{\|\cdot\|}(x,r) \right) \leq \hat{\veps} \balpha(m)r^m$.
\end{enumerate}
Indeed (A) follows from the first part of this proof and (B) follows from the fact that $\calM_H \left(T \hel \bB_{\|\cdot\|}(x,r) \right) \geq \calM_H \left(T_k \hel \bB_{\|\cdot\|}(x,r) \right)$ and that the inequality with $T$ replaced by $T_k$ holds true at small scales according to B. Kirchheim's area formula as in the first part of this proof. Finally we leave it to the reader to check that (C) can be deduced from \cite[2.10.19(4)]{GMT}. Finally, recalling \eqref{eq.thu.16} we infer that
\begin{equation*}
\begin{split}
\zeta_{\bmu} \left( T \hel \bB_{\|\cdot\|}(x,r) \right) & \geq \zeta_{\bmu} \left( T_k \hel \bB_{\|\cdot\|}(x,r) \right) - \zeta_{\bmu} \left( R_k \hel \bB_{\|\cdot\|}(x,r) \right) \\
& \geq \left( 1 - \hat{\veps} \right) \calM_H \left( T_k \hel \bB_{\|\cdot\|}(x,r) \right) - \calM_H \left(R_k \cap \bB_{\|\cdot\|}(x,r) \right) \\
& \geq \left( 1 - \hat{\veps} \right) \calM_H \left( T \hel \bB_{\|\cdot\|}(x,r) \right) - \left( 2 + \hat{\veps} \right) \calM_H \left(R_k \cap \bB_{\|\cdot\|}(x,r) \right) \\
& \geq \left( 1 - \hat{\veps} \left( 1 + \frac{2+\hat{\veps}}{1-\hat{\veps}} \right) \lno \bg_k(x) \rno^{-1} \right) \calM_H \left( T \hel \bB_{\|\cdot\|}(x,r) \right) \,.
\end{split}
\end{equation*}
For such good $x \in A_k$ it should now be clear how to initially choose $\hat{\veps}$ according to $\veps$ and $x$ so that the conclusion holds.
\end{proof}

\begin{Theorem}
\label{64}
Assume that $(X,\|\cdot\|)$ admits density contractors of dimension $m$ and let $\bmu$ be a choice of density contractors. It follows that the Gross mass $\calG^m_{\bmu}$ is lower semicontinuous with respect to $\calF$ convergence on $\calR_m(X,G)$.
\end{Theorem}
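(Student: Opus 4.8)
The plan is to reduce the statement to the lower semicontinuity of the Hausdorff mass and then run a Vitali covering argument built from Propositions \ref{62} and \ref{63}, in complete analogy with the proof of \ref{57}. First I would recall from \eqref{eq.thu.14} that $\calM_G = \calM_H$ on $\calR_m(X,G)$, so it suffices to prove that $\calM_H : \calR_m(X,G) \to \R$ is $\calF$ lower semicontinuous. So let $T,T_1,T_2,\ldots \in \calR_m(X,G)$ with $\calF(T-T_j) \to 0$, set $L = \liminf_j \calM_H(T_j)$, and — passing first to a subsequence realizing $L$ and then to a further subsequence — assume in addition that $(T_j)_j$ converges rapidly to $T$ and $\calM_H(T_j) \to L$; it then suffices to show $\calM_H(T) \leq L$.

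Fix $0 < \veps < 1$. By \ref{63} there is an $\calH^m_{\|\cdot\|}$ negligible set $N \subset \rmset_m\|T\|$ such that each $x \in \rmset_m\|T\| \setminus N$ carries a radius $r(x) > 0$ with $\zeta_{\bmu}(T \hel \bB_{\|\cdot\|}(x,r)) \geq (1-\veps)\calM_H(T \hel \bB_{\|\cdot\|}(x,r))$ for all $0 < r \leq r(x)$. Next, for such an $x$, applying \ref{rapid} to the Lipschitz function $-\rmdist_{\|\cdot\|}(x,\cdot)$, and removing the further (countable, hence $\calL^1$ null) set of radii at which $\|T\|$ or some $\|T_j\|$ charges the sphere $\{\,\|\cdot - x\| = r\,\}$, one gets a full $\calL^1$ measure set of radii $r \in (0,r(x)]$ along which $\calF\big((T-T_j)\hel\bB_{\|\cdot\|}(x,r)\big) \to 0$; for these $r$, Proposition \ref{62} yields $\zeta_{\bmu}(T\hel\bB_{\|\cdot\|}(x,r)) \leq \liminf_j \zeta_{\bmu}(T_j\hel\bB_{\|\cdot\|}(x,r))$. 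I would then let $\calV$ be the family of all closed balls $\bB_{\|\cdot\|}(x,r)$ with $x \in \rmset_m\|T\| \setminus N$ and $r$ in the corresponding good set. Since the closed balls centred on the countably $(\calH^m_{\|\cdot\|},m)$ rectifiable set $\rmset_m\|T\|$ form an $\calH^m_{\|\cdot\|}\hel\rmset_m\|T\|$ Vitali relation (as in \ref{57}, via \cite[2.8.9 and 2.8.18]{GMT}) and discarding an $\calL^1$ null set of radii at each centre preserves fineness and the Vitali property, $\calV$ is a $\|T\|$ Vitali relation on $\rmset_m\|T\|\setminus N$. Hence there is a countable disjointed subfamily $\{B_i = \bB_{\|\cdot\|}(x_i,\rho_i)\}_{i\in I} \subset \calV$ with $\|T\|\big(\rmset_m\|T\| \setminus \bigcup_i B_i\big) = 0$, so that $\calM_H(T) = \sum_i \calM_H(T\hel B_i)$.

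The conclusion would then follow by chaining the estimates: from the choice of $\calV$,
\begin{equation*}
(1-\veps)\calM_H(T) = \sum_{i\in I}(1-\veps)\calM_H(T\hel B_i) \leq \sum_{i\in I}\zeta_{\bmu}(T\hel B_i) \leq \sum_{i\in I}\liminf_j \zeta_{\bmu}(T_j\hel B_i)\,,
\end{equation*}
and then, by Fatou's lemma for series, by \eqref{eq.thu.16}, and by the disjointness of the $B_i$,
\begin{equation*}
\sum_{i\in I}\liminf_j \zeta_{\bmu}(T_j\hel B_i) \leq \liminf_j \sum_{i\in I}\zeta_{\bmu}(T_j\hel B_i) \leq \liminf_j \sum_{i\in I}\calM_H(T_j\hel B_i) \leq \liminf_j \calM_H(T_j) = L\,.
\end{equation*}
Letting $\veps \downarrow 0$ gives $\calM_H(T)\leq L$, as wanted.

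The main obstacle is not any single estimate but the simultaneous control of radii: the balls of the Vitali subcover must be small enough for the density lower bound of \ref{63}, while avoiding the null (for each fixed centre) set of radii along which the truncations $T_j\hel\bB_{\|\cdot\|}(x,r)$ fail to $\calF$ converge — otherwise Proposition \ref{62} cannot be applied ball by ball. This is precisely why rapid convergence is arranged at the start (so that \ref{rapid} applies) and why one must verify that thinning a Vitali relation by deleting a Lebesgue null set of radii at every centre leaves a Vitali relation.
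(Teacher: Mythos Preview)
Your proposal is correct and follows essentially the same route as the paper: a Vitali covering of $\rmset_m\|T\|$ by balls on which both the density lower bound of \ref{63} and the flat convergence of the restrictions hold, followed by the chain of inequalities via \ref{62}, \eqref{eq.thu.16}, and Fatou for series. The paper is slightly more terse---it cites \cite[5.2.3(2)]{DEP.HAR.07} directly rather than first passing to a rapidly convergent subsequence and invoking \ref{rapid}, and it does not spell out the sphere-measure or Vitali-preservation points you raise---but these are exactly the details one would fill in, and your handling of them is sound.
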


\begin{proof}
Let $T,T_1,T_2,\ldots$ be members of $\calR_m(X,G)$ such that $\calF(T-T_j) \to 0$ as $j \to \infty$. Given $\veps > 0$ we apply \ref{63} and \cite[5.2.3(2)]{DEP.HAR.07} to $T$. It follows that for $\calH^m_{\|\cdot\|}$ almost every $x \in \rmset_m \|T\|$ there exists $r(x) > 0$ with the the property that for $\calL^1$ almost every $0 < r \leq r(x)$ one has
\begin{enumerate}
\item[(A)] $(1-\veps) \calM_H \left( T \hel \bB_{\|\cdot\|}(x,r)\right) \leq \zeta_{\bmu}\left((T \hel \bB_{\|\cdot\|}(x,r)\right)$;
\item[(B)] $\calF \left(  T \hel \bB_{\|\cdot\|}(x,r) -  T_j \hel \bB_{\|\cdot\|}(x,r) \right) \to 0$ as $j \to \infty$.
\end{enumerate}
The collection of those balls $\bB_{\|\cdot\|}(x,r)$ for which both conditions (A) and (B) hold thus constitutes a fine covering of a set on which $\|T\|$ concentrates. It follows from \cite[2.8.9 and 2.8.15]{GMT} that it contains a disjointed countable subcollection $B_k = \bB_{\|\cdot\|}(x_k,r_k)$, $k=1,2,\ldots$ such that $\calM_G(T) = \sum_k \calM_G(T \hel B_k)$. Therefore
\begin{equation*}
\begin{split}
(1-\veps) \calM_G(T) & = (1-\veps)\sum_k \calM_G(T \hel B_k) \\
& \leq \sum_k \zeta_{\bmu} (T  \hel B_k) \\
\intertext{(by (A))}
& \leq \sum_k \liminf_j \zeta_{\bmu}(T_j \hel B_k) \\
\intertext{(by (B) and \ref{62})}
& \leq \liminf_j \sum_k \zeta_{\bmu}(T_j \hel B_k) \\
&\leq \liminf_j \sum_k \calM_G(T_j \hel B_k) \\
\intertext{(by \eqref{eq.thu.16} and \eqref{eq.thu.14})}
& \leq \liminf_j \calM_G(T_j) \,.
\end{split}
\end{equation*}
Since $\veps > 0$ is arbitrary the proof is complete.
\end{proof}

\section{Mass minimizing chains of dimension 2 or of codimension 1}

We are now able to dispense with hypothesis (C) of \ref{general.existence} in case of chains of dimension 2 or codimension 1.

\begin{Theorem}
\label{special.existence}
Assume that 
\begin{enumerate}
\item[(A)] $(X,\|\cdot\|)$ is a finite dimensional normed space and $(G,\lno \cdot \rno)$ is an Abelian normed locally compact White group;
\item[(B)] $m=2$ or $m=\dim X - 1$;
\item[(D)] $B \in \calR_{m-1}(X,G)$ and $\partial B = 0$.
\end{enumerate}
It follows that the Plateau problem
\begin{equation*}
(\calP) \begin{cases}
\text{minimize } \calM_H(T) \\
\text{among } T \in \calR_{m}^{\rmloc}(X,G) \text{ such that } \partial T = B
\end{cases}
\end{equation*}
admits a solution.
\end{Theorem}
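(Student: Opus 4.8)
The plan is to deduce the statement from Theorem \ref{general.existence}: hypotheses (A) and (D) here are identical to hypotheses (A) and (D) there, and hypothesis (B) here ($m=2$ or $m=\dim X-1$) is a special case of hypothesis (B) there ($1 \leq m \leq \dim X -1$). Hence the only thing that needs checking is hypothesis (C), namely that $\calM_H : \calR_m(X,G) \to \R$ is lower semicontinuous with respect to $\calF$ convergence; once this is in hand, Theorem \ref{general.existence} immediately produces a minimizer $T \in \calR_m^{\rmloc}(X,G)$ of $(\calP)$.

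To verify (C) I would chain together the results of Sections 4 and 5. First, because $m=2$ or $m=\dim X-1$, Theorem \ref{existence.d.c} guarantees that every $W \in \bG_m(X)$ admits a density contractor with respect to $\calH^m_{\|\cdot\|}$ — in codimension one this is Busemann's area-reducing projector \ref{44}, and in dimension two it is the measure constructed in \ref{48} out of the Burago--Ivanov calibration \ref{45} (via the tightness passage \ref{47}). Second, since $(G,\lno\cdot\rno)$ is in particular a complete normed Abelian group, Theorem \ref{tic.d.c} converts this into the statement that the triple $\big((X,\|\cdot\|),(G,\lno\cdot\rno),m\big)$ satisfies the triangle inequality for cycles. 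Third, Theorem \ref{theorem.2} asserts that the triangle inequality for cycles is equivalent to the lower semicontinuity of $\calM_H$ on $\calR_m(X,G)$, which is exactly (C).

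A second, parallel route — the one advertised in the introduction — bypasses Sections 4--5 and uses Section 6 directly: fix a choice $\bmu$ of density contractors of dimension $m$, legitimate by \ref{existence.d.c}; then the Gross mass $\calG^m_{\bmu}$ is $\calF$ lower semicontinuous by \ref{64}, and since $\calM_G = \calM_H$ by \eqref{eq.thu.14} one again obtains (C). Either way the proof closes.

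At this point there is no serious obstacle left: all the analytic content has already been expended — the construction of two-dimensional density contractors in \ref{48}, the cut-and-refine argument behind \ref{theorem.1}--\ref{theorem.2}, and the tightness-plus-diagonal argument inside \ref{general.existence}. The only mild care required is the bookkeeping of the group hypotheses, i.e. confirming that ``Abelian normed locally compact White group'' supplies exactly what \ref{general.existence} needs (local compactness and the White property, used for compactness via \ref{compactness}) and what \ref{tic.d.c} and \ref{64} need (completeness of the norm). Finally, I would note in passing — as the section title and the remark following \ref{general.existence} suggest — that when $m = \dim X - 1$ one can moreover arrange the minimizer to have compact support inside $\rmconv(\rmsupp B)$, but that strengthening is the content of \ref{existence.convex} and lies outside the present statement.
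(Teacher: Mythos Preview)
Your proposal is correct and follows essentially the same approach as the paper: reduce to Theorem \ref{general.existence} by verifying hypothesis (C), and obtain (C) via either the chain \ref{existence.d.c} $\Rightarrow$ \ref{tic.d.c} $\Rightarrow$ \ref{theorem.2} or the alternative route \ref{existence.d.c} $\Rightarrow$ \ref{64} together with $\calM_G=\calM_H$ from \ref{61}. The paper's own proof records exactly these two routes.
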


\begin{proof}
In view of \ref{general.existence} it suffices to show that the Hausdorff mass is lower semicontinuous with respect to $\calF$ convergence, under assumption (B). We have in fact given two distinct proofs of this. First recall that $(X,\|\cdot\|)$ admits density contractors of dimension $m$ with respect to $\calH_{\|\cdot\|}^m$, \ref{existence.d.c}, when $m=2$ or $m=\dim X - 1$. The first proof then goes as follows: The triangle inequality for cycles is established in \ref{tic.d.c} and  this in turn implies the sought for lower semicontinuity according to \ref{theorem.2}. The second proof refers to the lower semicontinuity of a Gross mass, \ref{64}   and the fact that Hausdorff mass and Gross mass coincide, \ref{61}.
\end{proof}

\section{Convex hulls}

In this final section we restrict to the case when $m = \dim X -1$.

\begin{Proposition}
\label{good.proj}
Assume $W_1,\ldots,W_Q$ are affine hyperplanes and $W_1^+,\ldots,W_Q^+$ are half spaces determined by these hyperplanes. Define $C = \cap_{q=1}^Q W_q^+$. There then exists a Lipschitz map $f : X \to X$ with the following properties.
\begin{enumerate}
\item[(1)] $f |_C = \rmid_C$;
\item[(2)] For every $T \in \calR_{\dim X -1}(X,G)$ one has $\calM_H(f_\# T) \leq \calM_H(T)$. Furthermore if $\rmsupp \partial T \subset C$ then $\rmsupp f_\# T \subset C$ and $\partial f_\# T = \partial T$.
\end{enumerate}
\end{Proposition}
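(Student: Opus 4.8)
The plan is to peel off one half-space at a time, using in codimension one Busemann's area-contracting projectors, and to absorb the overshoot inside each new facet hyperplane by a radial retraction, which is free of $(\dim X-1)$-dimensional measure. Write $W_q=\{\alpha_q=c_q\}$, $W_q^+=\{\alpha_q\geq c_q\}$ with $\alpha_q\in X^*$, set $C_0=X$ and $C_q=C_{q-1}\cap W_q^+$, so $C_Q=C$. By \ref{44} (applied to the linear hyperplane parallel to $W_q$ and translated) there is an affine projector $\pi_q\colon X\to X$ onto $W_q$, i.e.\ $\pi_q|_{W_q}=\rmid$ and $\pi_q(X)=W_q$, with $\calH^{\dim X-1}_{\|\cdot\|}(\pi_q(A))\leq\calH^{\dim X-1}_{\|\cdot\|}(A)$ for every Borel $A$ contained in an affine hyperplane; equivalently $\bdelta_{\pi_q}$ is the translate of a density contractor, \ref{rem-dc} and \ref{existence.d.c}. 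Assuming $C$ full dimensional — the lower dimensional and empty cases being handled by simpler variants of the same argument — one has, whenever $C_q\subsetneq C_{q-1}$, that $C_{q-1}\cap W_q$ is full dimensional in $W_q$; let $\rho_q\colon W_q\to C_{q-1}\cap W_q$ be the radial retraction from a point in its relative interior (and $\rho_q=\rmid$, $f_q=\rmid$, if $C_q=C_{q-1}$). Define $f_q\colon C_{q-1}\to C_q$ by $f_q=\rmid$ on $C_{q-1}\cap W_q^+$ and $f_q=\rho_q\circ\pi_q$ on $C_{q-1}\setminus W_q^+$. The two prescriptions agree along the seam $C_{q-1}\cap W_q$ because both $\pi_q$ and $\rho_q$ fix it, so $f_q$ is continuous; being Lipschitz on each of the two closed pieces $C_{q-1}\cap W_q^+$ and $C_{q-1}\cap\{\alpha_q\leq c_q\}$ that cover the convex set $C_{q-1}$, it is Lipschitz. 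Set $f=f_Q\circ\cdots\circ f_1\colon X\to C$.

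Conclusion (1) and the support statement then come for free: each $f_q$ fixes $C\subseteq C_q$ pointwise, so $f|_C=\rmid_C$, and $f(X)=C$, so $\rmsupp f_\# T\subseteq C$ for every $T$. For the mass inequality, fix $S\in\calR_{\dim X-1}(X,G)$ with $\rmsupp S\subseteq C_{q-1}$ and split $S=S\hel W_q^+ + S\hel(C_{q-1}\setminus W_q^+)$ (disjoint, $W_q^+$ closed); then $f_{q\#}S=S\hel W_q^+ + \rho_{q\#}\,\pi_{q\#}\bigl(S\hel(C_{q-1}\setminus W_q^+)\bigr)$. Here $\calM_H(\pi_{q\#}R)\leq\calM_H(R)$ for every rectifiable $R$ — immediate for polyhedral $R$ from the area inequality above, and in general by polyhedral approximation \cite[Theorem 4.2]{DEP.13.approx} (or from \ref{411} applied to the density contractor $\bdelta_{p_q}$, with $p_q$ the linear part of $\pi_q$, together with translation invariance of $\calM_H$) — while $\rho_{q\#}$ does not increase $\calM_H$ because $\rho_q$ is the identity on $C_{q-1}\cap W_q$ and maps the rest of $W_q$ into the relative boundary of $C_{q-1}\cap W_q$, a set of dimension $\dim X-2$, hence $\calH^{\dim X-1}_{\|\cdot\|}$-null, so that $\rho_{q\#}$ annihilates the chain carried there. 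Subadditivity of $\calM_H$ gives $\calM_H(f_{q\#}S)\leq\calM_H(S)$, and composing over $q$ (the intermediate chains being supported in the respective $C_{q-1}$) yields $\calM_H(f_\# T)\leq\calM_H(T)$. Finally, if $\rmsupp\partial T\subseteq C$ then $\partial f_\# T=f_\#\partial T$, and since $f$ fixes $C$ pointwise the affine homotopy $H(t,x)=(1-t)x+tf(x)$ restricts on $\lseg 0,1\rseg\times C$ to the projection onto the second factor, so $H$ collapses the cylinder $\lseg 0,1\rseg\times\partial T$; the homotopy formula (e.g.\ \cite[\S 2.6]{DEP.13.approx}) then gives $f_\#\partial T=\partial T$.

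I expect the substantive part to be the verification rather than any single conceptual leap: the continuity and the Lipschitz bound for each $f_q$ across the seam $C_{q-1}\cap W_q$ (which hinges on $\rho_q$ fixing that seam — precisely the point where a bare ``fold'' onto $W_q$ would be discontinuous); the push-forward manipulations for rectifiable chains restricted to the closed and relatively open pieces, including the facts that $g_\# R=R$ when $g$ restricts to the identity on $\rmsupp R$ (cleanly via the same homotopy trick) and that a rectifiable $(\dim X-1)$-chain supported in a $\calH^{\dim X-1}_{\|\cdot\|}$-null set vanishes; and the disposal of the degenerate cases where $C$, or some $C_{q-1}\cap W_q$, has dimension at most $\dim X-2$, or $C$ is empty. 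It is worth recording that both key ingredients are special to codimension one: Busemann's contracting projectors $\pi_q$ exist only for $m=\dim X-1$, and it is only for $m=\dim X-1$ that the relative boundary of a facet is $\calH^m_{\|\cdot\|}$-null, which is exactly what makes the radial-retraction correction cost nothing — so this construction yields a minimizer in the convex hull of the boundary only in that range.
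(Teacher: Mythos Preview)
Your approach diverges from the paper's in an interesting way. The paper defines each step map on all of $X$ as the bare fold $\rho_q = \rmid$ on $W_q^+$ and $\rho_q = \pi_q$ on its complement, with no retraction; the composition $f = \rho_Q \circ \cdots \circ \rho_1$ then does \emph{not} have image in $C$ (applying $\rho_q$ can push points out of earlier $W_j^+$), so the claim $\rmsupp f_\# T \subset C$ is not automatic. The paper instead proves it by induction under the hypothesis $\rmsupp \partial T \subset C$, using the constancy theorem: the part of the pushed-forward chain landing in $W_q \setminus \bigcap_{j \leq q} W_j^+$ lies in unbounded components, has no boundary there, and is compactly supported, hence vanishes. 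Your construction forces $f_q(C_{q-1}) \subset C_q$ geometrically via the extra retraction $\rho_q$, so $f(X) = C$ and the support statement holds for every $T$; this sidesteps the constancy theorem entirely.

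There is, however, a genuine gap: the radial retraction from an interior point onto $C_{q-1}\cap W_q$ is in general \emph{not} globally Lipschitz when that set is unbounded, and since $C_0 = X$ the intermediate sets $C_{q-1}\cap W_q$ are typically unbounded even when $C$ is bounded. Concretely, the radial retraction of $\R^2$ onto the half-plane $\{a \geq 0\}$ from the center $(1,0)$ sends $(a,b)$ with $a<0$ to $(0,\,b/(1-a))$, whose Lipschitz constant in the $a$-direction is of order $|b|$ and hence unbounded. So your $f_q$, and thus $f$, may fail to be Lipschitz as the proposition requires. The repair is minor: replace the radial retraction by the nearest-point projection onto $C_{q-1}\cap W_q$ in the auxiliary Euclidean structure $|\cdot|$, which is $1$-Lipschitz in $|\cdot|$ (hence Lipschitz in $\|\cdot\|$), fixes $C_{q-1}\cap W_q$ pointwise, and still maps $W_q \setminus (C_{q-1}\cap W_q)$ into the relative boundary, an $\calH^{\dim X -1}_{\|\cdot\|}$-null set. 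With that change your argument goes through and even yields $f(X) \subset C$, slightly more than the paper's map delivers.
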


\begin{proof}
We each $q=1,\ldots,Q$ we associate a measure contracting projector $\pi_q : X \to X$ on $W_q$ as in \ref{44} (in fact, a translation of those) and we define 
\begin{equation*}
\rho_q : X \to X : x \mapsto \begin{cases} 
x &\text{ if } x \in W_q^+ \\
\pi_q(x) &\text{ if } x \not \in W_q^+ 
\end{cases} \,.
\end{equation*}
It is obvious that each $\rho_q$ is Lipschitz, and so is $f = \rho_Q \circ \rho_{Q-1} \circ \ldots \circ \rho_1$. Readily, $f|_C = \rmid_C$.
Given $S \in \calR_{\dim X -1}(X,G)$ and $q=1,\ldots,Q$ we observe that
\begin{equation*}
\begin{split}
\rho_{q\,\#}S & = \rho_{q\,\#} \bigg( S \hel \left( \rmInt W_q^+ \right)\bigg) +  \rho_{q\,\#}\bigg( S \hel \left( \rmInt W_q^+ \right)^c \bigg)\\
& = S \hel \left( \rmInt W_q^+ \right) + \pi_{q\,\#}\bigg( S \hel \left( \rmInt W_q^+ \right)^c \bigg)
\end{split}
\end{equation*}
according to \cite[5.5.2(1)]{DEP.HAR.07}. It further follows from \ref{411} (applied with the obvious density contractor $\mu = \delta_{\pi_q}$ on $W_q$) that
\begin{equation*}
\calM_H \bigg(\pi_{q\,\#}\bigg( S \hel \left( \rmInt W_q^+ \right)^c \bigg)\bigg) \leq \calM_H\bigg( S \hel \left( \rmInt W_q^+ \right)^c \bigg) \,.
\end{equation*}
We conclude that $\calM_H(\rho_{q\,\#}S) \leq \calM_H(S)$.
\par 
Let $T \in \calR_{\dim X - 1}(X,G)$.
We define inductively $T_q = \rho_{q \, \#}T_{q-1}$, $q=1,\ldots,Q$, where $T_0=T$. Thus $T_Q = f_\# T$. It follows from the preceding paragraph that $\calM_H(T_q) \leq \calM_H(T_{q-1})$ for each $q$, whence $\calM_H(f_\# T ) \leq \calM_H(T)$. In the remaining part of this proof we assume that $\rmsupp \partial T \subset C$. We will establish by induction that for every $q=0,1,\ldots,Q$ the following hold:
\begin{enumerate}
\item[$(A)_q$] $\partial T_q = \partial T$;
\item[$(B)_q$] $\rmsupp T_q \subset \cap_{j=1}^q W_j^+$.
\end{enumerate}
Condition $(A)_0$ is trivially true. Assuming $(A)_{q-1}$ holds we notice that $\partial T_{q} = \partial \rho_{q\,\#}T_{q-1} = \rho_{q\,\#} \partial T_{q-1} = \rho_{q\,\#} \partial T$. Since $\rmsupp \partial T \subset C \subset W_j^+$ and $\rho_{q}|_{W_q^+} = \rmid_{W_q^+}$ it follows that $\partial T_q  = \rho_{q\,\#} \partial T = \partial T$ according to \cite[5.5.2(1)]{DEP.HAR.07}.
\par 
Condition $(B)_1$ is verified because
 $\rmsupp T_1 = \rmsupp \rho_{1\,\#}T \subset \rho_1(X) = W_1^+$. We now assume $(B)_{q-1}$ holds true and we prove $(B)_q$. Abbreviate $E_q = \cap_{j=1}^q W_j^+$. One easily checks that
\begin{equation*}
\rho_{q}(E_{q-1}) \subset W_{q} \cup E_{q} \,.
\end{equation*}
It then ensues from $(B)_{q-1}$ that $\rmsupp T_{q} \subset W_{q} \cup E_{q}$ and it remains to show that $T'_q := T_q \hel (W_q \setminus E_q)=0$. Let $U$ be a component of $W_q \setminus E_q$. It is open, and unbounded (if $x \in U$ and $L \subset W_q$ is a line through $x$, then $L \cap E_q$ is convex, hence an interval and consequently one of the lines in $L$ starting at $x$ is included in $U$). Since $\rmsupp \partial T_q \subset C \subset E_q$ according to $(A)_q$ we infer that $(\partial T'_q) \hel U = 0$. It follows from the constancy theorem \cite[Theorem 6.4]{DEP.HAR.14} that $T'_q = g \lseg U \rseg$ for some $g \in G$ and where an orientation of $W_q$ has been chosen. Since $\rmsupp T'_q \subset \rmsupp T_q$ is compact and since $U$ is unbounded it follows that $g=0$. Accordingly $T'_q$ and the proposition is proved.
\end{proof}

In the following $\rmconv( \rmsupp B)$ denotes the convex hull of $\rmsupp B$.

\begin{Theorem}
\label{existence.convex}
Assume that 
\begin{enumerate}
\item[(A)] $(X,\|\cdot\|)$ is a finite dimensional normed space and $(G,\lno \cdot \rno)$ is an Abelian normed locally compact White group;
\item[(D)] $B \in \calR_{m-1}(X,G)$ and $\partial B = 0$.
\end{enumerate}
It follows that the Plateau problem
\begin{equation*}
(\calP) \begin{cases}
\text{minimize } \calM_H(T) \\
\text{among } T \in \calR_{\dim X - 1}(X,G) \text{ such that } \partial T = B
\end{cases}
\end{equation*}
admits a minimizer $T$ such that $\rmsupp T \subset \rmconv( \rmsupp B)$.
\end{Theorem}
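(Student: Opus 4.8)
The plan is to build a minimizing sequence forced into ever tighter polyhedral neighbourhoods of $\rmconv(\rmsupp B)$ by the mass--nonincreasing maps of \ref{good.proj}, and then to extract a limiting minimizer using the compactness theorem \ref{compactness} together with the codimension one lower semicontinuity of the Hausdorff mass.

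Set $\gamma = \inf(\calP)$ and $K = \rmconv(\rmsupp B)$, a compact convex subset of $X$. Since $\partial B = 0$, for any $p \in X$ the cone $\bdelta_p \cone B$ lies in $\calR_{\dim X - 1}(X,G)$, has boundary $B$, and has finite Hausdorff mass, so $\gamma < \infty$; moreover $\calM_{H,|\cdot|}(B) < \infty$ as $B \in \calR_{\dim X - 2}(X,G)$. I also record that for $m = \dim X - 1$ the Hausdorff mass $\calM_H : \calR_m(X,G) \to \R$ is lower semicontinuous for $\calF$ convergence: every $W \in \bG_m(X)$ admits a density contractor with respect to $\calH^m_{\|\cdot\|}$ by \ref{existence.d.c}, hence the triple $\big((X,\|\cdot\|),(G,\lno\cdot\rno),m\big)$ satisfies the triangle inequality for cycles by \ref{tic.d.c}, so $\calM_H$ is $\calF$ lower semicontinuous by \ref{theorem.2}.

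For each $k = 1,2,\ldots$, choose finitely many affine hyperplanes and half-spaces $W_1^+,\ldots,W_Q^+$ (all depending on $k$) whose intersection $C_k := \bigcap_q W_q^+$ satisfies $K \subset C_k \subset \bB_{|\cdot|}(K,1/k)$; such outer polyhedral approximations exist by a routine compactness argument on the supporting half-spaces of $K$, and $C_k$ is then automatically bounded. Given a minimizing sequence $(T_k)_k$ for $(\calP)$, let $f_k : X \to X$ be the Lipschitz map associated with $C_k$ by \ref{good.proj}. Since $\rmsupp \partial T_k = \rmsupp B \subset K \subset C_k$, \ref{good.proj}(2) gives $S_k := f_{k\,\#}T_k \in \calR_{\dim X - 1}(X,G)$ with $\partial S_k = B$, $\rmsupp S_k \subset C_k \subset \bB_{|\cdot|}(K,1/k)$, and $\calM_H(S_k) \leq \calM_H(T_k)$. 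Thus $(S_k)_k$ is again minimizing, every $S_k$ is supported in the compact set $\bB_{|\cdot|}(K,1)$, and $\sup_k\big(\calM_{H,|\cdot|}(S_k) + \calM_{|\cdot|}(\partial S_k)\big) < \infty$. Since $(G,\lno\cdot\rno)$ is locally compact and White, \ref{compactness} yields a subsequence with $S_{k_j} \to T$ in $\calF$ for some $T \in \calR_{\dim X - 1}(X,G)$; continuity of $\partial$ for $\calF$ convergence gives $\partial T = B$, and the lower semicontinuity just recalled gives $\calM_H(T) \leq \liminf_j \calM_H(S_{k_j}) \leq \liminf_j \calM_H(T_{k_j}) = \gamma$, so $T$ is a minimizer of $(\calP)$.

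It remains to show $\rmsupp T \subset K$, which is the only delicate point. Fix $\eta > 0$ and let $p_\eta : X \to X$ be the nearest-point projection onto the closed convex set $\bB_{|\cdot|}(K,\eta)$ taken with respect to the fixed reference Euclidean norm $|\cdot|$; then $p_\eta$ is $1$-Lipschitz for $|\cdot|$, hence Lipschitz for $\|\cdot\|$, and $p_\eta$ is the identity on $\bB_{|\cdot|}(K,\eta)$. For $j$ large enough that $1/k_j \leq \eta$ one has $\rmsupp S_{k_j} \subset \bB_{|\cdot|}(K,\eta)$, hence $p_{\eta\,\#}S_{k_j} = S_{k_j}$; passing to the limit and using the $\calF$ continuity of $p_{\eta\,\#}$ gives $p_{\eta\,\#}T = T$, so $\rmsupp T \subset \overline{p_\eta(X)} = \bB_{|\cdot|}(K,\eta)$. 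Letting $\eta \downarrow 0$ and using $\bigcap_{\eta > 0}\bB_{|\cdot|}(K,\eta) = K$ gives $\rmsupp T \subset K = \rmconv(\rmsupp B)$. The main obstacle is exactly this last support bookkeeping (together with the elementary outer polyhedral approximation of $K$); everything else is a direct assembly of \ref{good.proj}, \ref{compactness}, and the codimension one lower semicontinuity of $\calM_H$.
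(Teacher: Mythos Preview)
Your proof is correct and follows the same overall strategy as the paper --- push a minimizing sequence into polyhedral neighbourhoods of $K=\rmconv(\rmsupp B)$ via \ref{good.proj}, apply compactness and codimension~1 lower semicontinuity, then argue the support is in $K$ --- but the execution differs in two respects. The paper fixes a \emph{nested} sequence of polytopes $C_0 \supset C_1 \supset C_2 \supset \cdots$ with $\bigcap_n C_n = K$ (obtained by intersecting with one new supporting half-space at a time), applies compactness for each fixed $n$ to get a minimizer $\hat{T}_n$ supported in $C_n$, and then applies compactness a second time to the sequence $(\hat{T}_n)_n$; the support conclusion then follows immediately from $\rmsupp T \subset C_{\varphi(n)}$ for all $n$. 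You instead run a single diagonal: your polytopes $C_k$ are not nested but satisfy $K \subset C_k \subset \bB_{|\cdot|}(K,1/k)$, you push $T_k$ into $C_k$ and extract once, and you recover the support bound via the Euclidean nearest-point retraction $p_\eta$ and the identity $p_{\eta\,\#}T = T$. Your route is a bit more economical (one compactness step rather than two); the paper's route has the pleasant byproduct of producing an actual minimizer $\hat{T}_n$ inside each polytope $C_n$ along the way. Both arguments rest on exactly the same ingredients: \ref{good.proj}, \ref{compactness}, and the codimension~1 lower semicontinuity of $\calM_H$.
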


\begin{proof}
Let $C = \rmconv( \rmsupp B)$. For each $x \in \rmBdry C$ there exists a half space $W_x^+ \supset C$ according to Hahn's theorem. The separability of $X^*$ guarantees that $C = \cap_{x \in D} W_x^+$ for some countable subset $D \subset \rmBdry C$. Choose a numbering $D = \{x_1,x_2,\ldots\}$. Choose also a compact convex polytope $C_0$ such that $C \subset C_0$. For each $n=1,2,\ldots$ define $C_n = C_0 \cap \left( \cap_{j=1}^n W^+_{x_j} \right)$. Given a minimizing sequence $T_1,2_2,\ldots$ for problem $(\calP)$ we apply \ref{good.proj} to $C_n$ and $T_k$. We obtain $T_{n,k} = f_{n \, \#} T_k$ and we notice that $T_{n,1},T_{n,2},\ldots$ is another minimizing sequence with $\rmsupp T_{n,k} \subset C_n$ for each $k=1,2,\ldots$. As $C_n \subset C_0$ is compact, the compactness theorem guarantees the existence of a subsequence converging in the flat norm $\calF$ to some $\hat{T}_n \in \calR_{\dim X - 1}(X,G)$ with $\rmsupp \hat{T }_n\subset C_n$. Since $\calM_H$ is $\calF$ lower semicontinuous in codimension 1 we also infer that $\calM_H(\hat{T}_n) = \inf (\calP)$. Applying the compactness theorem to the sequence $\hat{T}_1,\hat{T}_2,\ldots$ we obtain a subsequence $\hat{T}_{\vphi(1)}, \hat{T}_{\vphi(2)},\ldots$ converging in the flat norm to some $T \in \calR_{\dim X - 1}(X,G)$ such that $\partial T = B$ and $\calM_H(T) = \inf(\calP)$. Finally $\rmsupp T \subset C_{\vphi(n)}$ for every $n=1,2,\ldots$ and since the sequence $C_1,C_2,\ldots$ is decreasing we conclude that $\rmsupp T \subset C$.
\end{proof}

\begin{Example}
We close this paper by observing that under the assumption of \ref{existence.convex} there may also exist minimizers $T$ of problem $(\calP)$ such that $\rmsupp T \not \subset \rmconv(\rmsupp B)$. Consider $X = \ell^N_\infty$, $A = [-1,1]^{N-1} \subset \R^{N-1}$ and any $f : A \to \R$ with $|f(x)-f(x')| \leq \|x-x'\|_\infty$ for every $x,x' \in A$ and $f(x)=0$ whenever $x \in \rmBdry A$. Define $F : \R^{N-1} \to \R^N$ by $F(x)=(x,f(x))$. Let also $f_0=0$ and $F_0 = 0$. The key point is that since $\rmLip f \leq 1$ one has $\calH^{N-1}_{\|\cdot\|_\infty}(F(A)) = \calH^{N-1}_{\|\cdot\|_\infty}(F_0(A)) = \balpha(N-1)$ independently of $f$, as the happy reader will easily check. Then for any $G$, any $g \in G \setminus \{0\}$, letting $T_0 = g \lseg A \rseg \in \calR_{N-1}(X,G)$ and $T = F_\#T_0$ we see that $\calM_H(T)=\calM_H(T_0)$ and $\partial T = \partial T_0$. Among those $T$ only one, namely $T_0$, is supported in the convex hull of the support of its boundary.
\end{Example}

%=======================
% BIBLIOGRAPHY AND INDEX
%=======================
\renewcommand{\em}{\it}
\bibliographystyle{siam}
\bibliography{../../../Bibliography/thdp}

\begin{thebibliography}{10}

\bibitem{ALV.THO.04}
{\sc J.~C. \'{A}lvarez Paiva and A.~C. Thompson}, {\em Volumes on normed and
  {F}insler spaces}, in A sampler of {R}iemann-{F}insler geometry, vol.~50 of
  Math. Sci. Res. Inst. Publ., Cambridge Univ. Press, Cambridge, 2004,
  pp.~1--48.

\bibitem{AMB.SCH}
{\sc L.~Ambrosio and T.~Schmidt}, {\em Compactness results for normal currents
  and the {P}lateau problem in dual {B}anach spaces}, Proc. Lond. Math. Soc.
  (3), 106 (2013), pp.~1121--1142.

\bibitem{BOU.DEP}
{\sc P.~Bouafia and T.~{De Pauw}}, {\em Integral geometric measure in separable
  {B}anach space}, Math. Ann., 363 (2015), pp.~269--304.

\bibitem{BUR.IVA.12}
{\sc D.~Burago and S.~Ivanov}, {\em Minimality of planes in normed spaces},
  Geom. Funct. Anal., 22 (2012), pp.~627--638.

\bibitem{BUS.47}
{\sc H.~Busemann}, {\em Intrinsic area}, Ann. of Math. (2), 48 (1947),
  pp.~234--267.

\bibitem{BUS.49}
\leavevmode\vrule height 2pt depth -1.6pt width 23pt, {\em A theorem on convex
  bodies of the {B}runn-{M}inkowski type}, Proc. Nat. Acad. Sci. U. S. A., 35
  (1949), pp.~27--31.

\bibitem{DEP.13.approx}
{\sc T.~{De Pauw}}, {\em Approximation by polyhedral $g$ chains in banach
  spaces}, Z. Anal. Anwend., 33 (2014), pp.~311--334.

\bibitem{DEP.HAR.07}
{\sc T.~{De Pauw} and R.~Hardt}, {\em Rectifiable and flat {$G$} chains in a
  metric space}, Amer. J. Math., 134 (2012), pp.~1--69.

\bibitem{DEP.HAR.14}
\leavevmode\vrule height 2pt depth -1.6pt width 23pt, {\em Some basic theorems
  of flat $g$ chains}, J. Math. Anal. Appl., 418 (2014), pp.~1047--1061.

\bibitem{GMT}
{\sc H.~Federer}, {\em Geometric measure theory}, Die Grundlehren der
  mathematischen Wissenschaften, Band 153, Springer-Verlag New York Inc., New
  York, 1969.

\bibitem{FED.FLE.60}
{\sc H.~Federer and W.~H. Fleming}, {\em Normal and integral currents}, Ann. of
  Math. (2), 72 (1960), pp.~458--520.

\bibitem{FLE.66}
{\sc W.~H. Fleming}, {\em Flat chains over a finite coefficient group}, Trans.
  Amer. Math. Soc., 121 (1966), pp.~160--186.

\bibitem{KIR.94}
{\sc B.~Kirchheim}, {\em Rectifiable metric spaces: local structure and
  regularity of the {H}ausdorff measure}, Proc. Amer. Math. Soc., 121 (1994),
  pp.~113--123.

\bibitem{PARTHASARATHY}
{\sc K.~Parthasarathy}, {\em Probability measures in metric spaces}, Academic
  Press, 1967.

\bibitem{SCH.01}
{\sc R.~Schneider}, {\em On the {B}usemann area in {M}inkowski spaces},
  Beitr\"{a}ge Algebra Geom., 42 (2001), pp.~263--273.

\bibitem{SRIVASTAVA}
{\sc S.~M. Srivastava}, {\em A course on {B}orel sets}, vol.~180 of Graduate
  Texts in Mathematics, Springer-Verlag, New York, 1998.

\bibitem{THOMPSON}
{\sc A.~C. Thompson}, {\em Minkowski geometry}, vol.~63 of Encyclopedia of
  Mathematics and its Applications, Cambridge University Press, Cambridge,
  1996.

\bibitem{WHI.99.deformation}
{\sc B.~White}, {\em The deformation theorem for flat chains}, Acta Math., 183
  (1999), pp.~255--271.

\bibitem{WHI.99.rectifiability}
\leavevmode\vrule height 2pt depth -1.6pt width 23pt, {\em Rectifiability of
  flat chains}, Ann. of Math. (2), 150 (1999), pp.~165--184.

\end{thebibliography}

%\printindex

\end{document}